\numberwithin{equation}{section}
\newtheorem{theorem}{Theorem}[section]
\newtheorem{proposition}[theorem]{Proposition}
\newtheorem{lemma}[theorem]{Lemma}
\theoremstyle{definition}
\newtheorem{definition}[theorem]{Definition}
\newtheorem{remark}[theorem]{Remark}
\definecolor{myred}{rgb}{0.9,0,0}
\definecolor{vargreen}{rgb}{0.0, 0.5, 0.0}
\newcommand{\mylabel}[2]{#2\def\@currentlabel{#2}\label{#1}}
\renewcommand{\rho}{\varrho}
\renewcommand{\theta}{\vartheta}
\theoremstyle{plain} 
\DeclareMathOperator{\diver}{div}
\DeclareMathOperator*{\argmin}{arg\,min}
\pgfplotsset{/pgf/number format/use comma,compat=newest}
\renewcommand\epsilon{\varepsilon}
\newcommand{\R}{\mathbb{R}}
\newcommand{\N}{\mathbb{N}}
\newcommand{\vect}[1]{\boldsymbol{#1}}
\newcommand{\tens}[1]{\mathsf{#1}}
\newcommand{\abs}[1]{\left\lvert#1\right\rvert}
\newcommand{\norm}[1]{\left\lVert#1\right\rVert}
\newcommand{\prt}[1]{\left(#1\right)}
\begin{document}
\title{\textsc{Elastic membranes spanning deformable curves}}
\author{\textsc{F.\  Ballarin}$^1$\thanks{\href{mailto:francesco.ballarin@unicatt.it}{
\texttt{francesco.ballarin@unicatt.it}}}\,\,\,$-$\,\,\textsc{G.\ Bevilacqua}$^2$\thanks{\href{mailto:giulia.bevilacqua@dm.unipi.it}{\texttt{giulia.bevilacqua@dm.unipi.it}}}\,\,\,$-$\,\, \textsc{L.\ Lussardi}$^3$\thanks{\href{mailto:luca.lussardi@polito.it}{
\texttt{luca.lussardi@polito.it}}}\,\,\,$-$\,\,\textsc{A.\  Marzocchi}$^1$\thanks{\href{mailto:alfredo.marzocchi@unicatt.it}{
\texttt{alfredo.marzocchi@unicatt.it}}}
\bigskip\\
\normalsize$^1$ Dipartimento di Matematica e Fisica ``N. Tartaglia", Università Cattolica del Sacro Cuore,\\
\normalsize via della Garzetta 48, I-25133 Brescia, Italy.\\
\normalsize$^2$ Dipartimento di Matematica, Università di Pisa, Largo Bruno Pontecorvo 5, I–56127 Pisa, Italy.\\
\normalsize$^3$ DISMA ``G.L.\,Lagrange'', Politecnico di Torino, c.so Duca degli Abruzzi 24, I-10129 Torino, Italy.\\
}
\date{}

\maketitle
\begin{abstract}
\noindent
We perform a variational analysis of an elastic membrane spanning a closed curve which may sustain bending and torsion. First, we deal with parametrized curves and linear elastic membranes proving the existence of equilibria and finding first-order necessary conditions for minimizers computing the first variation. Second, we study a more general case, both for the boundary curve and for the membrane, using the framed curve approach. The infinite dimensional version of the Lagrange multipliers’ method is applied to get the first-order necessary conditions. Finally, a numerical approach is presented and employed in several numerical test cases.
\end{abstract}

\vspace{0.4cm}

{\em Keywords}: Elastic membranes, Elastic curves, Critical points, Finite element method\\\

{\em MSC 2020}: 49J45, 49Q05, 74K15, 74B20, 74S05

\vspace{0.4cm}

\section{Introduction}

In this paper, we are interested in the variational analysis of energy functionals of the type
\begin{equation}\label{funzionale_iniziale}
\mathcal{E}[\vect{\gamma}, \vect{X}] = \int_{\vect{\gamma}} f(\kappa, \tau)\, d\ell + \int_D \Psi(\nabla \vect{X})\, dudv,
\end{equation}
where $\gamma$ is a closed curve in $\R^3$ with curvature $\kappa$ and torsion $\tau$, $D$ is the unit disc in $\R^2$ and $\vect X\colon D \to \R^3$ is a parametrization of a membrane {\it spanning $\gamma$.} Precisely, we study minimizers among all $\vect X $ with trace pointwise equal to $\vect{\gamma}$: this choice of spanning condition will be crucial to derive first-order necessary conditions for minimizers. Physically, we are considering minimizers of elastic membranes modeled through their deformation gradient $\nabla \vect{X} \in \R^{3\times2}$. The literature is quite rich: we mention, for instance, tension field theory, which has been introduced to study fundamental stress-strain relations for isotropic elastic membranes \cite{pipkin1986relaxed, belytschko1996selected}, and the rigorous derivation of its form and properties obtained around 1990 by Le Dret and Raoult through a $\Gamma$-convergence result \cite{le1993modele,le1995nonlinear}. 

The study of variational problems involving curves and surfaces is a classical topic: there exists a huge literature on the existence of minimizers, while the study of critical points received less attention. 

The most famous example of a variational problem for surfaces is the minimization of the area functional which dates back to Lagrange \cite{lagrange1760essai} and Plateau \cite{plateau1873experimental}: they formulated, separately from a mathematical and physical point of view respectively, the famous {\em Plateau problem}. The first rigorous solution has been presented around 1930 by Douglas and Rad\'{o} \cite{Douglas, Rado} where the area functional is replaced by the Dirichlet integral in conformal coordinates. From that result, in the last century, many mathematicians looked for a more general and weaker definition of surface, for instance integral currents, finite perimeter sets and Almgren minimal sets (see for instance \cite{federer1960normal,almgren1968existence, taylor1976structure} and a complete review \cite{david2014should} with references therein). We also mention recent results by Lytchak and Wenger \cite{lytchak2017area} and by Creutz \cite{creutz2019plateau} where the existence of a minimizer for the classical area functional for disc-type surfaces spanning a fixed curve with possible self-intersections is investigated. 

If the boundary curve is assumed to be elastic, 
few results are known, both in the existence of solutions and in the derivation of Euler-Lagrange equations. 
In \cite{bernatzki2001minimal}, Bernatzki and Ye considered Euler elastic curves ($f=f(\kappa)=\kappa^2$) spanned by area minimizing disc-type surfaces and they proved existence of minimal Jordan curves. We point out that to avoid self-intersections they were forced to introduce a technical condition on the surface tension coefficient (see Hypothesis 2.1 in \cite{bernatzki2001minimal}). Next, Giomi and Mahadevan studied the stability of critical points of the elastic Plateau problem using asymptotic techniques and provided numerical simulations \cite{giomi2012minimal}. We also mention \cite{palmer2021minimal} where minimal surfaces spanning an elastic ribbon described through the Darboux framework are considered. Another possibility to avoid self-intersections is to consider an elastic rod instead of an elastic curve. In this direction there are some results which take into account more general dependence on $\kappa$ and $\tau$. Precisely, existence of minimizers under physical constraints can be found in \cite{biria2014buckling, biria2015theoretical, giusteri2017solution, bevilacqua2018soap, bevilacqua2019soap, bevilacqua2020dimensional}, whereas the derivation of the Euler-Lagrange equations for the critical points seems to be hard for two reasons. On one hand, the presence of constraints on the rod makes it difficult to work with the corresponding variations. On the other hand, the regularity of the contact set between rod and surface is not yet known even for a fixed simple boundary \cite{de2017direct}. 

In this paper we aim to allow for self-intersections, which are physically evident, and to treat a general dependence on $\kappa,\tau$. A first result in this direction is our contribution \cite{bevilacqua2021variational} where we derived the existence of minimizers of a functional of the following type
\[
\mathcal{F}[\vect{\gamma}]= \int_{\vect{\gamma}}f(\kappa, \tau)\, d\ell,
\]
where $\kappa$ and $\tau$ are suitable notions of {\em signed curvature} and {\em torsion} of the curve $\vect{\gamma}$ which may not be simple. Moreover, in the same paper, we also investigated the first-order necessary conditions for minimizers by means of an application of the infinite dimensional version of the Lagrange multipliers' method. This was one of the first results where the elastic curve is less regular, at most $C^1$, and the energy density penalizes not only the curvature but also the torsion (see for classical theory of elasticae \cite{dall2017minimal,garcke2020long} and references therein, and also the papers by Freddi {\em et al.} concerning the Sadowsky functional \cite{freddi2016corrected, freddi2016variational, freddi2022stability}).


Arising from the above considerations, the next natural step should be to add to $\mathcal F$ the area of the spanning surface. This problem leads to several difficulties. It seems that one should use the framework of Lytchak and Wenger \cite{lytchak2017area} and Creutz \cite{creutz2019plateau} in the Sobolev setting while Geometric Measure Theory is needed to deal with general surfaces. 

For all these reasons, here as a first result, we decide to consider classical elastic energy for the membrane surface: this leads to the  functional \eqref{funzionale_iniziale}. In the following, we give more details about our strategies. Let us first discuss the right formulation of the line integral on the elastic curve, namely 
\[
\int_{\vect{\gamma}}f(\kappa, \tau)\, d\ell.
\]
We notice that it is not convenient to see $\gamma$ as $\partial D$ like in \cite{giomi2012minimal, palmer2021minimal}: the integral over the curve $\vect{\gamma}$ has to be rewritten as a function of the the trace of $\vect{X}$ and this leads to a much more complicated expression of the curvature since the trace of $\vect{X}$ may not be parametrized by arc-length. Moreover, for the same reason, we would need to require higher regularity on $\vect{X}$, loosing compactness since only its $H^1$ norm can be bounded along minimizing sequences.
A first successfully approach is to use parametrized curves. In this setting, we can treat the Euler-Bernoulli elastica for $\vect{\gamma}$ and linear elastic membranes. More precisely, in Section \ref{sec:first-approach} we consider the following functional
\begin{equation}
\label{eq:E_parametrico}
     \mathcal E_p[\vect{r},\vect{X}] = \int_0^{2 \pi} \abs{\vect{r}''}^2\, d\vartheta + \frac{\mu}{2}\int_D \textgoth{C}\nabla \vect{X}:\nabla\vect{X}\, dudv.
\end{equation}
Here, $\vect{r}\in W^{2,2}((0, 2\pi);\R^3)$ is a $C^1$-periodic curve clamped at a fixed point and parametrized by arc-length. Moreover, $\mu >0$ is the shear modulus, $\vect{X}\in W^{1,2}(D;\R^3)$ is the disc-type parametrization of the membrane, $\textgoth{C}$ is the fourth order linear elasticity tensor which should depend on the point $\vect{p} \in D$. In addition, we require that 
\[
\hat{\vect{X}}\circ c(\vartheta)= \vect{r}(\theta), \quad \text{a.e.\,$\theta \in (0,2 \pi)$},
\]
where $\hat{\vect{X}}$ denotes the trace of $\vect{X}$ and $c(\vartheta) = (\cos\vartheta,\sin\vartheta)$. We observe that using the trace condition as in Douglas-Rad\'{o} approach, where $\hat{\vect{X}}$ is a reparametrization of $\vect{\gamma}$, we cannot prove the existence of minimizers since their method works only for a prescribed Jordan curve. Nevertheless, the pointwise constraint easily permits to apply the Direct Methods of the Calculus of Variations since the trace properties of Sobolev functions hold true. On the other hand, such a pointwise constraint strongly complicates the derivation of the Euler-Lagrange equations: it is hard, in general, to find variations of the surface which fulfill the trace constraint. For that reason we introduce an auxiliary energy functional where the trace constraint is imposed penalizing the $L^2$ distance between $\hat{\vect{X}}$ and $\vect{\gamma}$ and we derive the Euler-Lagrange equations for the ``free" energy, namely the one with no constraints. Then, we prove that when the associated Lagrange multiplier $\lambda$ goes to $+\infty$, then minimizers converge to the ones of the original functional $\mathcal{E}_p$ defined in \eqref{eq:E_parametrico}. Summing up, the main Theorem of Section \ref{sec:first-approach} is the next one.
\begin{theorem}{\bf(Parametrized curve approach, see Theorems \ref{th:minimo}, \ref{thm:vario_sup_no_curva} and \ref{thm:system_tot})}\\
\noindent
The functional $\mathcal{E}_p$ admits minimizers and they satisfy the following first-order necessary conditions
\[
\left\{
\begin{aligned}
&{\rm div}(\textgoth{C}\nabla \vect{X}) = 0, \quad \text{on $D$},\\
&2\vect r'''+2|\vect r''|^2\vect r'+\mu \vect F_\perp = 0, \quad \text{on $[0,2\pi]$}
\end{aligned}
\right.
\]
where 
\[
\vect F(\theta)=\int_0^\theta(\textgoth{C}\nabla \vect{X} \circ c)\vect{\nu}_D\,d\theta,
\]
$\vect{\nu}_D$ is the outer normal of the disc $D$ and the subscript $\perp$ stands for the orthogonal part with respect to $\vect r'$.
\end{theorem}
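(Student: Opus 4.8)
The plan is to split the statement into its three natural components — existence of minimizers, the Euler–Lagrange system for the bulk variable $\vect X$, and the Euler–Lagrange system for the curve $\vect r$ — and to treat each by a separate argument, exactly as the three cited theorems suggest.

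\emph{Existence.} I would apply the Direct Method to $\mathcal E_p$ on the admissible class $\mathcal A$ of pairs $(\vect r,\vect X)\in W^{2,2}((0,2\pi);\R^3)\times W^{1,2}(D;\R^3)$ with $\vect r$ parametrized by arc length, $C^1$-periodic, clamped at a fixed point, and satisfying the pointwise trace constraint $\hat{\vect X}\circ c=\vect r$. Along a minimizing sequence, coercivity of $\mathcal E_p$ (using ellipticity of $\textgoth C$, the clamping condition, and the Poincaré/Wirtinger inequality to control lower-order norms from $\|\vect r''\|_{L^2}$ and $\|\nabla\vect X\|_{L^2}$) gives weak $W^{2,2}\times W^{1,2}$ compactness; passing to a subsequence, $\vect r_k\rightharpoonup\vect r$ in $W^{2,2}$, hence $\vect r_k\to\vect r$ in $C^1$, which preserves both the arc-length constraint and the clamping in the limit, and $\vect X_k\rightharpoonup\vect X$ in $W^{1,2}$. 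The trace operator is continuous from $W^{1,2}(D)$ into $L^2(\partial D)$ and weakly continuous, so the pointwise trace constraint passes to the limit (after identifying the $L^2$-trace with the $C^1$-limit of $\vect r_k$). Finally, weak lower semicontinuity of both terms — the first because $\vect r\mapsto\int|\vect r''|^2$ is convex, the second because $\textgoth C$ is symmetric positive definite so $\nabla\vect X\mapsto\int \textgoth C\nabla\vect X:\nabla\vect X$ is convex — yields $\mathcal E_p[\vect r,\vect X]\le\liminf\mathcal E_p[\vect r_k,\vect X_k]$, so $(\vect r,\vect X)$ is a minimizer.

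\emph{Euler–Lagrange for $\vect X$.} For fixed $\vect r$, the admissible inner variations are $\vect X+t\vect\varphi$ with $\vect\varphi\in W^{1,2}_0(D;\R^3)$ (zero trace, hence preserving the constraint). Differentiating $\tfrac{\mu}{2}\int_D\textgoth C\nabla(\vect X+t\vect\varphi):\nabla(\vect X+t\vect\varphi)$ at $t=0$ gives $\mu\int_D\textgoth C\nabla\vect X:\nabla\vect\varphi=0$ for all such $\vect\varphi$, which is exactly the weak form of $\dvg(\textgoth C\nabla\vect X)=0$ in $D$. The other, harder half is the boundary variation: one must vary $\vect X$ while moving its trace together with $\vect r$. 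Here I expect the authors (following the penalization strategy announced in the introduction, Theorems \ref{thm:vario_sup_no_curva} and \ref{thm:system_tot}) to replace the hard constraint by the term $\lambda\|\hat{\vect X}\circ c-\vect r\|_{L^2}^2$, compute the unconstrained first variation, and then pass $\lambda\to+\infty$: the boundary contribution $\int_{\partial D}(\textgoth C\nabla\vect X)\vect\nu_D\cdot\vect\psi\,d\sigma$ produced by a free variation $\vect\psi$ of $\vect X$ becomes, in the limit, the force $\vect F(\theta)=\int_0^\theta(\textgoth C\nabla\vect X\circ c)\vect\nu_D\,d\theta$ transmitted to the curve — this identifies the coupling term that appears in the $\vect r$-equation.

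\emph{Euler–Lagrange for $\vect r$.} Now I vary $\vect r$ subject to the arc-length constraint $|\vect r'|\equiv1$ and the clamping, carrying the surface trace along. Admissible variations are of the form $\vect r_t=\vect r+t\vect w+o(t)$ with $\vect w$ $C^1$-periodic, vanishing at the clamped point, and satisfying the linearized constraint $\vect r'\cdot\vect w'=0$; correspondingly $\vect X$ is adjusted near $\partial D$ to keep the trace condition. The first variation of $\int_0^{2\pi}|\vect r''|^2\,d\theta$ is $2\int_0^{2\pi}\vect r''\cdot\vect w''\,d\theta$, and integrating by parts twice (periodicity kills boundary terms) gives $2\int_0^{2\pi}\vect r''''\cdot\vect w\,d\theta$; the coupling with the surface energy contributes $\mu\int_0^{2\pi}\vect F\cdot\vect w'\,d\theta=-\mu\int_0^{2\pi}\vect F'\cdot\vect w\,d\theta$ with $\vect F'=(\textgoth C\nabla\vect X\circ c)\vect\nu_D$ — wait, the stated equation is first order in the force and third order in $\vect r$, so the correct bookkeeping integrates the curve term by parts only to the order matching $\vect F$, yielding $2\vect r'''+\cdots$ tested against $\vect w'$. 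Enforcing the constraint $\vect r'\cdot\vect w'=0$ via a Lagrange multiplier $\sigma(\theta)$ produces a term $\sigma\vect r'$; a short computation identifying $\sigma$ (differentiating the constraint and using $|\vect r'|=1$, $\vect r'\cdot\vect r''=0$) shows the tangential multiplier combines with the curve term to give the coefficient $2|\vect r''|^2$ in front of $\vect r'$, and projecting the remaining equation onto the normal complement of $\vect r'$ — which is what the subscript $\perp$ on $\vect F$ encodes — yields $2\vect r'''+2|\vect r''|^2\vect r'+\mu\vect F_\perp=0$ on $[0,2\pi]$.

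\emph{Main obstacle.} The genuine difficulty, and the reason the auxiliary penalized functional is introduced, is the coupling at the boundary: the pointwise trace constraint $\hat{\vect X}\circ c=\vect r$ does not admit a simple space of admissible simultaneous variations of $(\vect r,\vect X)$, so one cannot directly test the full first variation. Making rigorous the passage $\lambda\to+\infty$ — showing that penalized minimizers converge (strongly enough in $W^{2,2}\times W^{1,2}$) to a minimizer of $\mathcal E_p$, that the Lagrange multiplier/force $\lambda(\hat{\vect X}\circ c-\vect r)$ converges to a finite limit identified with $\vect F'$, and that the limiting Euler–Lagrange identities are precisely the stated system — is where the real work lies. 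Everything else (the two integrations by parts, the treatment of the arc-length multiplier, the normal projection) is routine once this limit is under control.
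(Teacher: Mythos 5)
Your overall architecture — direct method for existence, interior (zero-trace) variations for the bulk equation, penalization of the trace constraint and a passage $\lambda\to+\infty$ for the curve equation — does match the paper's. But the mechanism you assign to the penalization is not the one that makes the proof work, and the step you defer as ``where the real work lies'' is precisely the step the paper is engineered to avoid. In the paper, for each fixed $\lambda_h$ one varies the pair $(\vect r_h,\vect X_h)$ \emph{simultaneously}: the curve is varied by an admissible field $\vect\eta_h$ (constructed in Lemma \ref{lemma:variazione_curva} so that arc length and periodicity are preserved exactly), and the surface is varied by $\vect X_h+t\vect V_h$ where $\vect V_h$ is chosen with $\hat{\vect V}_h\circ c=\vect\eta_h$. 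With this matched choice the first variation of the penalty term is
$2\int_0^{2\pi}(\hat{\vect X}_h\circ c-\vect r_h)\cdot(\vect\eta_h-\vect\eta_h)\,d\theta=0$,
so the $\lambda_h$-term never enters the Euler--Lagrange identity at all; the resulting identity $2\int_0^{2\pi}\vect r_h''\cdot\vect\eta_h''\,d\theta+\mu\int_D\textgoth C\nabla\vect X_h:\nabla\vect V_h\,dudv=0$ is $\lambda$-independent and passes to the limit by weak convergence of $(\vect r_h,\vect X_h)$ against strongly convergent tests. There is no object ``$\lambda(\hat{\vect X}\circ c-\vect r)$ converging to $\vect F'$'' to control; the force $\vect F$ arises solely from integrating by parts the membrane term against a variation with nonzero trace. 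Your proposed route — independent variations plus identification of the limit of the penalty force — would require uniform dual-space bounds on $\lambda_h(\hat{\vect X}_h\circ c-\vect r_h)$ that you have no means of producing, so as written this part of the plan would not go through.

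A second genuine gap is regularity. The curve equation is third order, while a minimizer is only a priori in $W^{2,2}$; you cannot integrate $\int 2\vect r''\cdot\vect\eta''$ by parts (once, let alone twice) without first proving $\vect r\in W^{3,2}$. The paper does this with the Bernatzki--Ye device: admissible tests $\vect\phi$ with $\vect\phi'=\vect\xi_\perp-\sum_i a_i\vect v_i$, where the $\vect v_i$ are orthogonal to $\vect r'$ and $\int_0^{2\pi}\vect v_i\,d\theta$ span an orthonormal basis; this yields a duality estimate $\bigl|\int 2\vect r''\cdot\vect\xi'\,d\theta\bigr|\le c\|\vect\xi\|_{L^2}$, hence $\vect r\in W^{3,2}$, and then a Lebesgue-point argument removes the correction terms $a_i\vect v_i$ to give $(2\vect r'''+\mu\vect F)_\perp=0$. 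Your informal Lagrange multiplier $\sigma(\theta)$ for the arc-length constraint is also not what the paper uses (nor is its existence justified): the coefficient $2|\vect r''|^2$ comes simply from the identity $\vect r'''\cdot\vect r'=-|\vect r''|^2$, i.e.\ $(\vect r''')_\perp=\vect r'''+|\vect r''|^2\vect r'$. Finally, note that the paper proves the curve equation only for minimizers in the class $\mathcal M_p$ of limits of penalized minimizers, a restriction your proposal does not register.
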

We observe that using the parametric approach we are not able to get a full variational analysis if we consider non-linear surface energies of the form 
\[
\int_D \Psi(\nabla \vect{X})\, dudv.
\]
For more details, we refer to Remark \ref{rem_elastica}.

In order to deal with more general energies, both for the boundary curve and for the membrane, in Section \ref{sec:second-approach} we use the framed curve approach introduced by Bishop \cite{bishop}. We introduce a moving orthonormal frame $\left\{\vect{t},\vect{n},\vect{b}\right\} \in W^{1, p}((0, 2 \pi); SO(3))$ with $p >1$ which generates a curve $\vect{r}$ by integration and we define the energy in terms of the frame as
\begin{equation}
\label{eq:E_framed}
    \mathcal{E}_{f}\left[\left(\vect{t}|\vect{n}|\vect{b}\right), \vect{X} \right]= \int_0^{2 \pi} f(\kappa,\tau)\, d\vartheta + \frac{\mu}{2} \int_D \Psi(\nabla \vect{X})\, dudv,
\end{equation}
where $\kappa = \vect{t}'\cdot \vect{n}$ and $\tau= \vect{n}'\cdot \vect{b}$ play the role of weak signed curvature and weak torsion respectively. Here, as in the previous case, the pointwise trace constraint holds, i.e.
$$
\hat{\vect{X}}\circ c(\theta)= \vect{r}(\theta), \quad \text{a.e.\,$\theta \in (0,2 \pi)$}.
$$
The main Theorem of Section \ref{sec:second-approach} is the following one.

\begin{theorem}{\bf(Framed curve approach, see Theorems \ref{th:esistenza_frame} and \ref{th:lagrange_multiplier_nostro})}\\
\noindent
Assume $f$ is a convex function and $f(a,b) \geq c_1 \abs{a}^p+ c_2 \abs{b}^p + c_3$ for some $c_1, c_2 >0$ and $c_3 \in \R$ and $p >1$. Moreover, assume $\Psi$ is continuous, quasiconvex (see Definition \ref{def:quasiconvex}) and satisfying $c_4 \abs{\tens{A}}^q \leq \Psi(\tens{A})\leq c_5 \abs{\tens{A}}^q$ for some $c_4, c_5 >0$ and $q >1$. Then $\mathcal{E}_f$ has a minimizer. Moreover, if 
\begin{itemize}
    \item[ (a) ] $f$ is of class of $C^1$ and that there is $\alpha>0$ such that 
\begin{equation*}
    \begin{aligned}
    &\abs{f_a(a,b)} \leq \alpha\left(1 + \abs{a}^{p-1} +\abs{b}^{p-1}\right),&&&\abs{f_b(a,b)} \leq \alpha\left(1 + \abs{a}^{p-1} +\abs{b}^{p-1}\right),
    \end{aligned}
\end{equation*}
(here $f_a,f_b$ are the partial derivatives
of $f$ with respect to $a$ and $b$ respectively);
\item[ (b) ] there is $\beta>0$ such that 
\[
\abs{{\rm D}\Psi(\tens{A})} \leq \beta \left(1 + \abs{\tens{A}}^2\right)
\]
for all $\tens{A} \in \R^{3\times 2}$,
\end{itemize}
then for each minimizer there exist $\omega \in L^p((0, 2 \pi))$, $\vect{\chi} \in L^2((0,2 \pi);\R^3)$ and $\vect{\lambda} \in \R^3$ such that the following first-order necessary conditions hold a.e. $(u,v) \in D$ and $\vartheta \in [0, 2 \pi]$
\[
\left\{
\begin{aligned}
&\diver\,\left({\rm D}\Psi(\nabla \vect{X})\right) = 0, &&\hbox{on }D,\\
 &-f_a' -\omega \tau -\vect{n} \cdot \int_0^\vartheta \vect{\chi} \, ds  = \vect{\lambda}\cdot \vect{n}, &&\hbox{on }\partial D,\\
 & -f_a \tau +\omega' + f_b \kappa - \vect{b} \cdot\int_0^\vartheta \vect{\chi}\, ds  = \vect{\lambda}\cdot \vect{b}, &&\hbox{on }\partial D,\\
	&\frac{\mu}{2} \left({\rm D}\Psi(\nabla \vect{X} \circ c)\right)\vect{\nu}_D = \vect{\chi}, &&\hbox{on }\partial D,\\
 & \omega \kappa -f'_b = 0,&&\hbox{on }\partial D.
\end{aligned}
\right.
\]
\end{theorem}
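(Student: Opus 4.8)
The plan is to split the statement into its two halves—existence of minimizers and derivation of the first-order necessary conditions—and treat them by the two standard machines: the direct method of the calculus of variations, and the infinite-dimensional Lagrange multiplier theorem (in the Lyusternik–Graves / Zeidler formulation). For existence, I would work on the admissible class
\[
\mathcal{A}=\Bigl\{\bigl((\vect{t}|\vect{n}|\vect{b}),\vect{X}\bigr):(\vect{t}|\vect{n}|\vect{b})\in W^{1,p}((0,2\pi);SO(3)),\ \vect{X}\in W^{1,q}(D;\R^3),\ \hat{\vect{X}}\circ c=\vect{r}\ \text{a.e.},\ \vect{r}(0)=\vect{r}_0\Bigr\},
\]
where $\vect{r}(\vartheta)=\vect{r}_0+\int_0^\vartheta\vect{t}\,ds$ and closure of the loop is enforced as $\int_0^{2\pi}\vect{t}=0$. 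Take a minimizing sequence. The coercivity hypothesis $f(a,b)\ge c_1|a|^p+c_2|b|^p+c_3$ bounds $\|\kappa_n\|_{L^p}$ and $\|\tau_n\|_{L^p}$; since the frame lives in $SO(3)$ its $L^\infty$ bound is automatic, and the structure equations $\vect{t}'=\kappa\vect{n}-(\text{something})\vect{b}$ etc. (the weak Frenet system with the third coefficient controlled as well—this is exactly the Bishop-frame bookkeeping from \cite{bevilacqua2021variational}) give a uniform $W^{1,p}$ bound on the frame. The growth $\Psi(\tens A)\le c_5|\tens A|^q$ together with $\Psi\ge c_4|\tens A|^q$ bounds $\vect{X}_n$ in $W^{1,q}(D)$ once one fixes the mean or uses the trace constraint plus a Poincaré-type inequality. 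Extract weakly convergent subsequences; $W^{1,p}\hookrightarrow\hookrightarrow C^0$ in one dimension makes the $SO(3)$ constraint and the trace constraint pass to the limit (the trace operator $W^{1,q}(D)\to L^q(\partial D)$ is compact for $q>1$, and uniform convergence of $\vect{r}_n$ closes the identification $\hat{\vect{X}}\circ c=\vect{r}$). Lower semicontinuity of the curve term follows from convexity of $f$ and Ioffe's theorem; lower semicontinuity of the membrane term is precisely where quasiconvexity of $\Psi$ enters, via the Acerbi–Fusco lower semicontinuity theorem. This gives a minimizer; this part I expect to be routine modulo carefully transcribing the frame estimates from the cited companion paper.

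For the necessary conditions I would set up the constrained problem abstractly: let $Y=W^{1,p}((0,2\pi);\R^{3\times3})\times W^{1,q}(D;\R^3)$ be the ambient Banach space, let $\mathcal{E}_f:Y\to\R$ be the (Gâteaux-differentiable, by hypotheses (a)–(b)) energy, and encode the constraints as $G(y)=0$ with $G$ mapping into $Z=L^?\times\cdots$: namely the pointwise $SO(3)$ relations $\vect{t}\cdot\vect{t}-1,\vect{t}\cdot\vect{n},\dots$ (six scalar constraints, landing in $C^0$ or $L^\infty$), the loop-closure $\int_0^{2\pi}\vect{t}=0\in\R^3$, and the trace constraint $\hat{\vect{X}}\circ c-\vect{r}=0$ landing in $L^2((0,2\pi);\R^3)$ (or $W^{1-1/q,q}$ of the boundary). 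The Lagrange multiplier theorem requires surjectivity of $DG$ at the minimizer; I would verify this by exhibiting, for any prescribed right-hand side, a variation solving it—using that one can freely prescribe an $L^2$ perturbation of $\hat{\vect{X}}$ on $\partial D$ and extend it harmonically (or by any bounded extension) into $D$ to hit the trace component, while infinitesimal rotations $\vect{w}\times(\cdot)$ of the frame by $\vect{w}\in W^{1,p}((0,2\pi);\R^3)$ span the tangent space to the $SO(3)$-constraint, and adjusting $\vect{w}$'s mean handles loop-closure. Surjectivity of $DG$ is the main obstacle: the interplay between the pointwise orthonormality constraints and the trace constraint must be shown to be non-degenerate, and one has to be careful about the target spaces so that the multipliers land in the spaces claimed ($\omega\in L^p$, $\vect{\chi}\in L^2$, $\vect{\lambda}\in\R^3$)—this forces the $SO(3)$ constraint to be posed in a space whose dual contains the curvature/torsion test functions, which is why $\omega$ ends up in $L^p$ and not merely a measure.

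Granting surjectivity, the abstract theorem yields $D\mathcal{E}_f(y)+\langle\Lambda,DG(y)\cdot\rangle=0$ for some multiplier $\Lambda=(\omega,\vect{\chi},\vect{\lambda})$ in $Z^\ast$. The final step is to unpack this identity by choosing test directions adapted to each block. Varying $\vect{X}$ by $\vect{\varphi}\in C_c^\infty(D;\R^3)$ (interior variations, which respect the trace constraint automatically) gives $\diver(\mathrm{D}\Psi(\nabla\vect{X}))=0$ in $D$ in the weak sense; varying $\vect{X}$ by $\vect{\varphi}$ not vanishing on $\partial D$ and integrating by parts produces the natural boundary term $\tfrac{\mu}{2}(\mathrm{D}\Psi(\nabla\vect{X}\circ c))\vect{\nu}_D$, which is matched against $\vect{\chi}$ by the trace constraint's contribution—this is the fourth equation. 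Varying the frame by an infinitesimal rotation $\vect{w}$ and using $\delta\kappa=\vect{w}'\cdot\vect{b}$-type formulas (the standard first-variation formulas for $\kappa=\vect{t}'\cdot\vect{n}$, $\tau=\vect{n}'\cdot\vect{b}$ under $(\vect{t}|\vect{n}|\vect{b})\mapsto e^{\epsilon\,\mathrm{skw}(\vect{w})}(\vect{t}|\vect{n}|\vect{b})$), integrating by parts in $\vartheta$, and separating the coefficients of the three components of $\vect{w}$ yields the three ODE/boundary relations involving $f_a,f_b,\omega,\omega'$ and $\int_0^\vartheta\vect{\chi}$; the $\vect{\lambda}\cdot\vect{n}$, $\vect{\lambda}\cdot\vect{b}$ terms are the contribution of the loop-closure (or clamping) multiplier, and the last equation $\omega\kappa-f_b'=0$ comes from the component of $\vect{w}$ along $\vect{t}$, which does not move $\vect{r}$ and hence sees no $\vect{\chi}$ term. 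The growth conditions (a)–(b) are exactly what guarantee that all these pairings are finite and that $D\mathcal{E}_f$ is a bounded linear functional on $Y$, so the formal computation is legitimate. I expect the bookkeeping of which multiplier term lands in which equation—and the sign conventions—to be the only delicate part of this last stage; the analytic content is entirely in the existence proof and in the surjectivity of $DG$.
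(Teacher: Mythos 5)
Your proposal follows essentially the same route as the paper: existence via the direct method (coercivity of $f$ giving $L^p$ bounds on $(\kappa,\tau)$, the $SO(3)$ bound on the frame, the trace/Poincar\'e estimate of Lemma \ref{compactness} for $\vect{X}$, convexity for the curve term and Acerbi--Fusco quasiconvex lower semicontinuity for the membrane term), and the necessary conditions via the infinite-dimensional Lagrange multiplier theorem, with surjectivity of the constraint derivative checked by exhibiting explicit frame variations and a harmonic extension of the prescribed boundary perturbation. The only cosmetic differences are that the paper builds the loop-closure and periodicity conditions into the ambient space $Y$ (so the constant $\vect{\lambda}\in\R^3$ arises from testing against zero-mean $\vect{\eta}_1$ rather than as the multiplier of $\int_0^{2\pi}\vect{t}\,d\vartheta=0$), and that it unpacks the multiplier identity with general variations $\vect{\eta}_1,\vect{\eta}_2,\vect{\eta}_3$ followed by explicit elimination of the orthonormality multipliers, rather than restricting a priori to infinitesimal rotations of the frame.
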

\medskip

Subsequently, we are able to eliminate all the Lagrange multipliers for smooth critical points whenever $\kappa \neq 0$ (see Theorem \ref{th:elimino_l_m}) and in the case where $\Psi(A) =|A|^2$ and $f(\kappa) = \kappa^2$, we show that the two approaches coincide (see Remark \ref{rem:uguali}). The power of the framed curve approach stems also in the fact that to get first-order necessary conditions for minimizers we do not have to find suitable variations but we can directly apply the infinite dimensional Lagrange multipliers' Theorem. 

Finally, in Section \ref{sec:third-approach} we introduce an {\em ad hoc} method to be employed in the numerical simulations and we test it in some cases in Section \ref{sec:numerical-results}. We propose a suitable discretization of the functional
\[
[\vect{\gamma}, \vect{X}] \mapsto  \int_{\vect{\gamma}} \kappa^2 \, d\ell + \int_D \textgoth{C}\nabla \vect{X} : \nabla \vect{X} \, dudv.
\]
Here, we deem convenient to identify the elastic curve $\vect{\gamma}$ with the trace of the parametrized map $\vect{X}$. On one hand, this choice is computationally convenient since it allows to consider a single unknown $\vect{X}$, thus satisfying by construction the identification constraints between $\vect{X}$ and $\vect{\gamma}$. On the other hand, we need to increase the necessary regularity $\vect{X} \in W^{3,2}(D; \R^3)$ for the first integral to be well-defined, namely to guarantee that $\hat{\vect{X}} \in W^{\frac{5}{2}, 2}(\partial D;\R^3)$. The corresponding discretization by a finite element method would thus require a $C^2$ continuity across cells, which is different from the standard $C^0$ requirement of Lagrange finite elements. Since the definition of finite elements with such an higher order continuity is challenging and not readily available on commonly used open-source implementations, in Section \ref{sec:numerical-results} we will exploit an alternative way to reformulate the discrete problem. Furthermore, another important aspect of the numerical approach is how to impose the length preserving constraint, since a pointwise formulation is ill-suited in the application of a finite element method. Hence, by introducing an additional unknown, we will replace that constraint by an integral formulation through an infinite dimensional Lagrange multiplier defined only on the boundary of the disc. The additional unknown, being defined only on the boundary, only marginally increases the overall computational cost. Precisely, we introduce the following functional
\begin{equation}
    \label{eq:E_numerico}
    \mathcal{E}_n[\vect{X}] = \int_{\partial D} \abs{\partial^2_{\theta} \hat{\vect{X}}}^2\, d\theta + \frac{\mu}{2}\int_D \textgoth{C}\nabla \vect{X}:\nabla\vect{X}\, dudv,
\end{equation}
where $\vect{X} \in W^{3,2}(D; \R^3)$ is the disc parametrization of the elastic membrane, $\partial_{\theta}$ is the tangential directional derivative on the boundary of the disk $\partial D$. We identify the curve $\vect{r}$ with the trace of the map $\vect{X}$, namely $\hat{\vect{X}} \in W^{\frac{5}{2}, 2}(\partial D;\R^3)$. Furthermore, we require 
\[
\left|\partial_\theta \hat{\vect{X}}\right| = 1,\quad\text{on}\quad\partial D,
\qquad \text{and} \qquad
\int_D \vect{X} \; du dv = \vect{0},
\]
i.e., we are imposing the length preserving constraint and avoiding rigid motions. 
However, in practice for technical convenience the first constraint is imposed weakly by means of the following auxiliary functional
\[
    \widetilde{\mathcal{E}}_n[\vect{X}, \ell]= \int_{\partial D} \abs{\partial^2_{\theta} \hat{\vect{X}}}^2\, d\vartheta + \frac{\mu}{2}\int_D \textgoth{C}\nabla \vect{X}:\nabla\vect{X}\, dudv + \int_{\partial D} \ell \; \left[\left|\partial_\theta \hat{\vect{X}}\right|^2 - 1\right] d\vartheta,
\]
where $\ell \in W^{\frac{1}{2},2}(\partial D;\R)$ is an infinite dimensional Lagrange multiplier. In this setting, we are able to prove the following Theorem.
\begin{theorem}{\bf(Numerical approach, see Equation \eqref{eq:system_tot1})}\\
\noindent 
The following first-order necessary conditions hold

\[
\left\{
\begin{aligned}
&\mu \int_D \textgoth{C}\nabla \vect{X}: \nabla \vect{V}\, dudv
+ 2 \int_{\partial D} \partial^2_{\theta} \hat{\vect{V}} \cdot \partial^2_{\theta} \hat{\vect{X}}\, d\theta
+ 2 \int_{\partial D} \ell \; \partial_\theta \hat{\vect{V}} \cdot \partial_\theta \hat{\vect{X}} \; d\theta = 0,\\
&\int_{\partial D} m \; \left[\left|\partial_\theta \hat{\vect{X}}\right|^2 - 1\right] d\theta = 0,
\end{aligned}
\right.
\]
for every $(\vect{V}, m) \in C^\infty(\overline{D}; \R^3) \times C^\infty(\overline{\partial D}; \R)$ such that $\int_D \vect{V} \; du dv = \vect{0}$.
\end{theorem}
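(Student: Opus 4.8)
The plan is to derive the stated system as the Euler--Lagrange equations of the auxiliary functional $\widetilde{\mathcal{E}}_n$, i.e.\ to compute the first variation of $\widetilde{\mathcal{E}}_n$ with respect to both unknowns $\vect X$ and $\ell$ and impose stationarity. The only subtlety compared with a textbook first-variation computation is the presence of the two side constraints: the rigid-motion normalization $\int_D \vect X\, dudv = \vect 0$, which restricts the admissible directions $\vect V$, and the pointwise length constraint $|\partial_\theta\hat{\vect X}|=1$, which has already been relaxed into the multiplier term with $\ell$. First I would fix a minimizer $(\vect X,\ell)$ and, for test functions $(\vect V,m)\in C^\infty(\overline D;\R^3)\times C^\infty(\overline{\partial D};\R)$ with $\int_D\vect V\,dudv=\vect 0$, consider the scalar function $g(s,t)=\widetilde{\mathcal{E}}_n[\vect X + s\vect V,\ \ell + t\,m]$. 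Since $\vect X+s\vect V$ still satisfies the linear constraint $\int_D(\vect X+s\vect V)=\vect 0$ for every $s$, and $\ell+tm\in W^{1/2,2}(\partial D;\R)$ is an admissible multiplier for every $t$, the point $(0,0)$ is a free (interior) minimizer of $g$ on $\R^2$, hence $\partial_s g(0,0)=\partial_t g(0,0)=0$.

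Next I would compute the two partial derivatives explicitly. The $t$-derivative is immediate: $\partial_t g(0,0)=\int_{\partial D} m\,[\,|\partial_\theta\hat{\vect X}|^2-1\,]\,d\theta$, which gives the second equation of the system. For the $s$-derivative, the bulk term $\frac{\mu}{2}\int_D\textgoth{C}\nabla\vect X:\nabla\vect X\,dudv$ is a quadratic form in $\vect X$, so its derivative is $\mu\int_D\textgoth{C}\nabla\vect X:\nabla\vect V\,dudv$ (using the symmetry of the elasticity tensor $\textgoth{C}$); the boundary bending term $\int_{\partial D}|\partial^2_\theta\hat{\vect X}|^2\,d\theta$ is likewise quadratic in $\hat{\vect X}$, with derivative $2\int_{\partial D}\partial^2_\theta\hat{\vect V}\cdot\partial^2_\theta\hat{\vect X}\,d\theta$ (here one uses that the trace operator is linear, so $\widehat{\vect X+s\vect V}=\hat{\vect X}+s\hat{\vect V}$ and $\partial^2_\theta$ commutes with it); and the constraint term $\int_{\partial D}\ell\,[\,|\partial_\theta\hat{\vect X}|^2-1\,]\,d\theta$ has derivative $2\int_{\partial D}\ell\,\partial_\theta\hat{\vect V}\cdot\partial_\theta\hat{\vect X}\,d\theta$. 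Summing and setting $\partial_s g(0,0)=0$ yields exactly the first equation of the system. No integration by parts is needed: the statement is posed in weak (variational) form, so one simply reads off the linear functional of $(\vect V,m)$.

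The one genuine point requiring care — and the place I expect a careful referee to look — is justifying the interchange of differentiation and integration, i.e.\ that $g$ is differentiable at $(0,0)$ with the derivative obtained by formally differentiating under the integral sign. This is where the growth hypothesis on $\textgoth{C}$ (boundedness of the constant linear tensor) and the fact that every integrand is a polynomial (degree $\le 2$) in $s$ and degree $1$ in $t$ enter: for fixed $\vect V\in C^\infty(\overline D;\R^3)$ and $m\in C^\infty(\overline{\partial D};\R)$, the function $(s,t)\mapsto g(s,t)$ is literally a quadratic polynomial in $(s,t)$ with coefficients given by convergent integrals (all the relevant norms $\|\vect X\|_{W^{3,2}(D)}$, $\|\hat{\vect X}\|_{W^{5/2,2}(\partial D)}$, $\|\vect V\|_{W^{3,2}}$, $\|\hat{\vect V}\|_{W^{5/2,2}}$ are finite, the last by the trace theorem $W^{3,2}(D)\hookrightarrow W^{5/2,2}(\partial D)$), so differentiability is automatic and the Gateaux derivative computed above is correct. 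Finally, one should remark that the space of admissible smooth $\vect V$ with $\int_D\vect V=\vect 0$ is dense enough in the natural constrained subspace of $W^{3,2}(D;\R^3)$ that testing against it captures all stationarity information — but since the theorem only claims the weak identities for such $(\vect V,m)$, this density remark is not strictly needed for the proof of the statement as written.
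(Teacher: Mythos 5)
Your proposal is correct and follows essentially the same route as the paper: the authors also take the linear variations $\vect{X}_t=\vect{X}+t\vect{V}$ and $\ell_s=\ell+sm$, compute the Gateaux derivatives of the three terms $\widetilde{\mathcal I}_1,\widetilde{\mathcal I}_2,\widetilde{\mathcal I}_3$ and of the multiplier term in $s$, and set both partial derivatives to zero, obtaining exactly the two weak identities. Your added remarks on the quadratic-polynomial dependence of $g(s,t)$ (justifying differentiation under the integral) and on the symmetry of the elasticity tensor are harmless refinements of details the paper leaves implicit.
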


This numerical approach will then be employed in several numerical test cases which will show a typical behavior of the Dirichlet energy functional, i.e.\,the possibility of planar or non-planar self-intersections \cite{morgan2016geometric}, and in the future it can be used in biological researches. Indeed, the phenomenon of self-intersections can be interpreted as the physical absorption of knotted proteins by cellular membranes \cite{evans2018mechanics}. However, the proposed numerical approach can only penalize the curvature of the curve and consider linear elastic membrane. Future efforts must be devoted in developing a numerical method capable of handling the dependence of the energy density function $f$ on the torsion plus considering a non-linear elastic energy density $\Psi$, as introduced in the framed curve approach.

\section{The parametrized curve approach}
\label{sec:first-approach}
\subsection{Setting of the problem}
We assume that the boundary elastic curve is closed 
and is clamped at a fixed point $\vect x_0$ and it has fixed length. More precisely, fix $\vect{x}_0 \in \R^3$ and let $\vect{r} \in W^{2,2}((0,2\pi);\R^3)$ be such that:
\begin{align}
&\label{eq:r_0} \vect r(0)=\vect r(2\pi)=\vect x_0,\\
&\label{eq:rprimo0}\vect r'(0) =\vect r'(2\pi),\\
&\label{eq:velocita_1}\left|\vect{r}'\right| = 1, \quad \text{on $[0,2 \pi]$}.
\end{align}
Notice that conditions \eqref{eq:r_0} and \eqref{eq:rprimo0} make sense since $\vect r\in C^1([0,2\pi];\R^3)$. The elastic membrane is assumed to be a smooth disc-type parametrization $\vect{X}\in W^{1,2}(D; \R^3)$, where $D := \{(u,v) \in \R^2 : u^2 + v^2 < 1\}$. Denoting by $\hat{\vect X}$ the trace of $\vect X$ on $\partial D$ and by $c(\theta)=(\cos\theta,\sin\theta)$, the following constraint has to be satisfied by $\vect r$ and $\vect{X}$
\begin{align}
     \label{eq:traccia}
\hat{\vect X}\circ c = \vect r, \quad \text{a.e.\,on $(0,2 \pi)$}.
\end{align}
In the following, with the notation $c^{-1}$ we will mean the inverse of $c$ restricted on $[0,2\pi)$. Notice that the quantity $\hat{\vect X}\circ c$ is well defined, at least a.e.\,on $(0,2\pi)$, even if $\hat{\vect X} \in W^{\frac{1}{2},2}(\partial D;\R^3)$, since the map $c$ is a bijection as a map $[0,2\pi) \to \partial D$. Let
\[
\mathcal W_p=W^{2,2}((0,2 \pi); \R^3) \times W^{1,2}(D; \R^3),
\]
where the subscript $p$ refers to the {\em parametric approach} adopted here. On $\mathcal W_p$ we consider the product weak topology, then $(\vect r_h,\vect X_h) \rightharpoonup (\vect r,\vect X)$ means that $\vect r_h\rightharpoonup \vect r$ in $W^{2,2}((0,2 \pi); \R^3)$ and $\vect X_h\rightharpoonup \vect X$ in $W^{1,2}(D;\R^3)$. The set of constraints is given by
\[
\mathcal C_p= \left\{(\vect{r}, \vect{X})\in \mathcal W_p: \text{\eqref{eq:r_0} - \eqref{eq:rprimo0} - \eqref{eq:velocita_1} - \eqref{eq:traccia} hold true}\right\}.
\]
We recall that the scalar product between two matrices $A=(a_{ij}),B=(b_{ij})$ is defined by
\[
A:B=\sum_{i=1}^3\sum_{j = 1}^2a_{ij}b_{ij}.
\]
Let us now introduce the energy functional. Let $\mathcal E_p \colon \mathcal C_p\to [0,+\infty)$ be given by
\[
    \mathcal E_p[\vect{r},\vect{X}] = \int_0^{2 \pi} \abs{\vect{r}''}^2\, d\vartheta + \frac{\mu}{2}\int_D \textgoth{C}\nabla \vect{X}:\nabla\vect{X}\, dudv
\]
where $\mu>0$ is the shear modulus, measuring the softness of the membrane, and $\textgoth C \in H^1(D;\mathcal L(\R^{3\times 2};\R^{3\times 2}))$ with
\begin{equation}\label{coerc}
\alpha\abs{\zeta}^2 \le \textgoth{C}(u,v)\zeta : \zeta \leq \beta \left(1+ \abs{\zeta}^2\right), \qquad \forall\,(u,v)\in D,\, \forall \zeta \in \R^{3\times2},
\end{equation}
for some $\alpha,\beta >0$ independent of $(u,v) \in D$.

Our goal is to minimize $\mathcal E_p$ and to find first-order necessary conditions for suitable minimizers.

\subsection{Existence of minimizers of \texorpdfstring{${\mathcal E}_p$}{E}}

In this subsection, we are going to prove closure of the constraint set $\mathcal{C}_p$ and the compactness of sequences of equi-boundeness energy. As a consequence, existence of minimizers follows.

\begin{lemma}\label{compactness}
Let $(\vect{r}_h, \vect{X}_h) \in \mathcal{C}_p$ be such that $\mathcal E_p[\vect{r}_h, \vect{X}_h]\le c$ for some $c>0$. 
Then there exists $(\vect{r}, \vect{X})\in \mathcal C_p$ such that, up to a subsequence, $(\vect{r}_h, \vect{X}_h) \stackrel{}{\rightharpoonup } (\vect{r}, \vect{X})$ weakly in $\mathcal W_p$. 
\end{lemma}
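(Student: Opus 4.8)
The plan is a textbook application of the Direct Method of the Calculus of Variations: from the energy bound I would extract a subsequence converging weakly in $\mathcal W_p$, and then I would check that the weak limit still satisfies all of \eqref{eq:r_0}, \eqref{eq:rprimo0}, \eqref{eq:velocita_1} and \eqref{eq:traccia}, so that it belongs to $\mathcal C_p$.

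\emph{A priori bounds.} The energy bound gives $\norm{\vect r_h''}_{L^2((0,2\pi))}^2\le c$ at once; the unit-speed constraint \eqref{eq:velocita_1} bounds $\vect r_h'$ in $L^\infty$ (hence in $L^2$), and writing $\vect r_h(\vartheta)=\vect x_0+\int_0^\vartheta\vect r_h'$ and using \eqref{eq:r_0} bounds $\vect r_h$ in $L^\infty$; thus $(\vect r_h)$ is bounded in $W^{2,2}((0,2\pi);\R^3)$. For the membrane, the coercivity \eqref{coerc} yields $\tfrac{\mu\alpha}{2}\norm{\nabla\vect X_h}_{L^2(D)}^2\le\mathcal E_p[\vect r_h,\vect X_h]\le c$, while the trace constraint \eqref{eq:traccia} says that the boundary trace of $\vect X_h$ equals $\vect r_h\circ c^{-1}$, which is bounded in $L^2(\partial D;\R^3)$; a Poincar\'e-type inequality estimating $\norm{\vect X_h}_{L^2(D)}$ by $\norm{\nabla\vect X_h}_{L^2(D)}+\norm{\hat{\vect X}_h}_{L^2(\partial D)}$ (itself obtained by contradiction from the compactness of the Rellich embedding) then bounds $\vect X_h$ in $W^{1,2}(D;\R^3)$. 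Since $\mathcal W_p$ is reflexive, up to a subsequence $(\vect r_h,\vect X_h)\rightharpoonup(\vect r,\vect X)$ in $\mathcal W_p$.

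\emph{Closure of $\mathcal C_p$.} Because $W^{2,2}((0,2\pi);\R^3)$ embeds compactly into $C^1([0,2\pi];\R^3)$, the weak convergence of $\vect r_h$ already forces $\vect r_h\to\vect r$ in $C^1$; passing to the limit in the pointwise identities gives \eqref{eq:r_0} and \eqref{eq:rprimo0}, and the uniform convergence $\vect r_h'\to\vect r'$ gives \eqref{eq:velocita_1}. The trace constraint \eqref{eq:traccia} is the delicate point, and I expect it to be the main obstacle, since weak $W^{1,2}(D)$ convergence does not a priori preserve a pointwise identity on $\partial D$. The remedy is the compactness of the trace operator $W^{1,2}(D;\R^3)\to L^2(\partial D;\R^3)$: it upgrades $\vect X_h\rightharpoonup\vect X$ to $\hat{\vect X}_h\to\hat{\vect X}$ strongly in $L^2(\partial D;\R^3)$, and since $c$ parametrizes $\partial D$ by arc length, precomposition with $c$ is an isometry onto $L^2((0,2\pi);\R^3)$, so $\hat{\vect X}_h\circ c\to\hat{\vect X}\circ c$ in $L^2$. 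Comparing with $\hat{\vect X}_h\circ c=\vect r_h\to\vect r$ in $L^2$, uniqueness of limits forces $\hat{\vect X}\circ c=\vect r$ a.e., which is \eqref{eq:traccia}. Hence $(\vect r,\vect X)\in\mathcal C_p$.

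The only genuinely non-routine ingredients are thus the compactness of the trace embedding — needed both to pass to the limit in the pointwise trace constraint and, in contrapositive form, for the Poincar\'e inequality controlling $\norm{\vect X_h}_{L^2(D)}$ — together with the compact embedding $W^{2,2}\hookrightarrow C^1$ in dimension one; everything else is the standard reflexivity and weak-compactness machinery.
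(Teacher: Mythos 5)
Your proof is correct and follows the same overall architecture as the paper's (a priori bounds, weak compactness, closure of the constraint set), but two of the key technical steps are handled by different lemmas. For the $L^2$ bound on $\vect X_h$, the paper constructs the harmonic extension $\vect Y_h$ of the boundary datum $\vect r_h\circ c^{-1}$, applies the Poincar\'e inequality to $\vect Z_h=\vect X_h-\vect Y_h\in W^{1,2}_0(D;\R^3)$, and controls $\norm{\vect Y_h}_{W^{1,2}}$ by $\norm{\vect r_h}_{W^{2,2}}$; you instead invoke a Friedrichs-type inequality $\norm{\vect X}_{L^2(D)}\le C\left(\norm{\nabla\vect X}_{L^2(D)}+\norm{\hat{\vect X}}_{L^2(\partial D)}\right)$, proved by contradiction via Rellich compactness. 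Both are legitimate; the paper's is constructive and self-contained, yours is shorter but outsources the work to a compactness argument you only sketch. For the trace constraint, the paper uses only that the trace operator, being linear and continuous, is \emph{weakly} continuous, and then identifies $\hat{\vect X}\circ c$ with $\vect r$ by testing against continuous periodic functions $\psi$ and using the uniform convergence $\vect r_h\to\vect r$; you instead use that the trace operator $W^{1,2}(D;\R^3)\to L^2(\partial D;\R^3)$ is \emph{compact} (it factors through the compact embedding $W^{\frac{1}{2},2}(\partial D)\hookrightarrow L^2(\partial D)$), which upgrades the convergence of traces to strong $L^2$ convergence and lets you conclude by uniqueness of limits. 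Your route is slightly stronger than needed but cleaner, avoiding the duality/testing step; the paper's uses less machinery. Both arguments are complete and the conclusion is the same.
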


\begin{proof}
First of all, we have $\norm{\vect{r}''_h}_{L^2} \leq c$, hence \eqref{eq:velocita_1} implies that $\|\vect r'_h\|_{L^2}=\sqrt{2\pi}$. Since $\vect r_h$ is clamped at $\vect x_0$, we get 
\[
\vect r_h(\theta)=\vect x_0+\int_0^{\theta}\vect r_h'\,ds,
\]
from which $\|\vect r_h\|_{L^2} \le c'$ for some other positive constant $c'$. As a consequence, $\norm{\vect{r}_h}_{W^{2,2}}$ is bounded, and then, up to a subsequence, $\vect{r}_h \rightharpoonup \vect{r}$ in $W^{2,2}((0,2\pi);\R^3)$ for some $\vect{r} \in W^{2,2}((0,2\pi);\R^3)$. By Sobolev embeddings, we get $\vect{r}_h\to \vect{r}$ strongly in $C^1$. Thus, conditions \eqref{eq:r_0} - \eqref{eq:rprimo0} - \eqref{eq:velocita_1} pass easily to the limit. Now, using the coercivity condition \eqref{coerc} for the membrane, we easily deduce that $\|\nabla \vect X_h\|_{L^2}$ is bounded. Now, we prove that also $\|\vect{X}_h\|_{L^2}$ is bounded. For any $(u,v) \in \partial D$ we let $\tilde{\vect {r}}_h(u,v)=\vect{r}_h(c^{-1}(u,v))$. Take $\vect{Y}_h=\text{Ext}\,\tilde{\vect{r}}_h$, where $\text{Ext}: [0,2 \pi]\to D$ is the extension operator solution of the Dirichlet problem with boundary condition $\tilde{\vect{r}}_h$, and consider $\vect{Z}_h=\vect X_h-\vect{Y}_h$. By construction $\vect {Z}_h\in W^{1,2}_0(D;\R^3)$. Combining the triangle inequality with the Poincar\'e inequality and the extension embedding Ext, we get
\[
\begin{aligned}
\|\vect X_h\|_{L^2} &\le \|\vect Z_h\|_{L^2}+\|\vect Y_h\|_{L^2}\\
&\le c_1\|\vect \nabla Z_h\|_{L^2}+\|\vect Y_h\|_{L^2} \\
&\le c_1\|\nabla \vect X_h\|_{L^2} +c_2\|\vect Y_h\|_{W^{1,2}} \\
&\le c_1\|\nabla \vect X_h\|_{L^2} +c_3\|\vect r_h\|_{W^{2,2}}\\
&\le c_4.
\end{aligned}
\]
As a consequence, up to a subsequence, $\vect{X}_h \rightharpoonup \vect{X}$ in $W^{1,2}(D;\R^3)$ for some $\vect{X} \in W^{1,2}(D;\R^3)$. It remains to prove that $(\vect{r}, \vect{X})$ satisfies also condition \eqref{eq:traccia}. Take a continuous function $\psi \colon [0,2\pi] \to \R^3$ with $\psi(0)=\psi(2\pi)$. Let $\varphi \colon \partial D \to\R$ be given by $\varphi(u,v)=\psi(c^{-1}(u,v))$. Then $\varphi \in L^2(\partial D;\R^3)$. The trace operator $W^{1,2}(D;\R^3) \to W^{\frac{1}{2},2}(\partial D;\R^3)$ is linear and continuous, hence also weakly continuous. This implies that
\[
\lim_{h\to+\infty}\int_{\partial D}\hat{\vect{X}}_h\cdot \varphi\,d\mathcal H^1=\int_{\partial D} \hat{\vect X}\cdot \varphi\,d\mathcal H^1,
\]
namely
\[
\lim_{h\to+\infty}\int_0^{2\pi}(\hat{\vect X}_h \circ c)\cdot \psi\,d\theta=\int_0^{2\pi}(\hat{\vect X}\circ c)\cdot \psi\,d\theta.
\]
Since $\hat{\vect X}_h \circ c=\vect r_h$ we obtain
\[
\lim_{h\to+\infty}\int_0^{2\pi}\vect r_h\cdot \psi\,d\theta=\int_0^{2\pi}(\hat{\vect X}\circ c)\cdot \psi\,d\theta.
\]
On the other hand, $\vect{r}_h\to \vect{r}$ uniformly on $[0,2\pi]$, hence
\[
\lim_{h\to+\infty}\int_0^{2\pi}\vect r_h\cdot \psi\,d\theta=\int_0^{2\pi}\vect r\cdot \psi\,d\theta.
\]
By the arbitrariness of $\psi$ we conclude that $\hat{\vect X} \circ c=\vect r$ for a.e.\,on $(0,2 \pi)$, that is \eqref{eq:traccia} and this concludes the proof. 
\end{proof}

\begin{theorem}
    \label{th:minimo}
    There exists a minimizer of $\mathcal{E}_p$ in $\mathcal{C}_p$.
\end{theorem}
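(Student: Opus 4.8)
The plan is to apply the Direct Method of the Calculus of Variations, which reduces the problem to two ingredients both essentially already in place: sequential weak lower semicontinuity of $\mathcal{E}_p$ on $\mathcal{W}_p$, and compactness of minimizing sequences in $\mathcal{C}_p$ together with closedness of $\mathcal{C}_p$ under weak convergence. The latter is precisely the content of Lemma~\ref{compactness}, so the real work is the lower semicontinuity, after which the argument is the standard boilerplate.

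First I would note that $\mathcal{E}_p \geq 0$ on $\mathcal{C}_p$, so $m := \inf_{\mathcal{C}_p} \mathcal{E}_p$ is a finite nonnegative number (and $\mathcal{C}_p \neq \emptyset$, e.g.\ take a planar circle of length $2\pi$ clamped appropriately through $\vect{x}_0$ together with a harmonic extension as $\vect{X}$, so the infimum is over a nonempty set). Pick a minimizing sequence $(\vect{r}_h, \vect{X}_h) \in \mathcal{C}_p$ with $\mathcal{E}_p[\vect{r}_h,\vect{X}_h] \to m$; in particular $\mathcal{E}_p[\vect{r}_h,\vect{X}_h] \leq c$ for some $c > 0$ and all $h$. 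By Lemma~\ref{compactness} there exist $(\vect{r},\vect{X}) \in \mathcal{C}_p$ and a subsequence (not relabeled) with $\vect{r}_h \rightharpoonup \vect{r}$ in $W^{2,2}$ and $\vect{X}_h \rightharpoonup \vect{X}$ in $W^{1,2}$. Since the limit lies in $\mathcal{C}_p$, it is admissible.

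Next I would prove $\mathcal{E}_p[\vect{r},\vect{X}] \leq \liminf_h \mathcal{E}_p[\vect{r}_h,\vect{X}_h] = m$, treating the two terms separately. The bending term $\vect{r} \mapsto \int_0^{2\pi} |\vect{r}''|^2\, d\vartheta$ is the square of a seminorm that is weakly lower semicontinuous on $W^{2,2}$, because $\vect{r}_h'' \rightharpoonup \vect{r}''$ in $L^2$ and the $L^2$-norm is weakly l.s.c.; equivalently $\int |\vect{r}''|^2 = \sup\{2\int \vect{r}''\cdot\vect{\phi} - \int|\vect{\phi}|^2 : \vect{\phi} \in L^2\}$ exhibits it as a supremum of weakly continuous affine functionals. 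For the membrane term, the integrand $\zeta \mapsto \textgoth{C}(u,v)\zeta:\zeta$ is, by the symmetry and coercivity \eqref{coerc}, a nonnegative quadratic form in $\zeta$, hence convex in $\zeta$ for a.e.\ $(u,v)$; together with the measurability of $(u,v) \mapsto \textgoth{C}(u,v)$ (indeed $\textgoth{C} \in H^1$) and the nonnegativity, the classical Ioffe / De Giorgi theorem on weak lower semicontinuity of integral functionals gives that $\vect{X} \mapsto \int_D \textgoth{C}\nabla\vect{X}:\nabla\vect{X}\, dudv$ is sequentially weakly l.s.c.\ on $W^{1,2}(D;\R^3)$. (Alternatively, one can prove this by hand: expand $\textgoth{C}\nabla\vect{X}_h:\nabla\vect{X}_h = \textgoth{C}\nabla\vect{X}:\nabla\vect{X} + 2\textgoth{C}\nabla\vect{X}:\nabla(\vect{X}_h-\vect{X}) + \textgoth{C}\nabla(\vect{X}_h-\vect{X}):\nabla(\vect{X}_h-\vect{X})$; the middle term tends to $0$ by weak convergence since $\textgoth{C}\nabla\vect{X} \in L^2$, and the last term is $\geq 0$ by coercivity.) Adding the two liminf inequalities yields $\mathcal{E}_p[\vect{r},\vect{X}] \leq m$, and since $(\vect{r},\vect{X}) \in \mathcal{C}_p$ forces $\mathcal{E}_p[\vect{r},\vect{X}] \geq m$, we conclude equality, so $(\vect{r},\vect{X})$ is a minimizer.

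The main obstacle is not conceptual but the careful handling of the membrane term's lower semicontinuity when $\textgoth{C}$ is merely an $H^1$ (hence $L^2$, not continuous) coefficient field: one must make sure a convex-integrand l.s.c.\ theorem applies with only measurability in $(u,v)$ and convexity in the gradient variable, which is exactly the De Giorgi--Ioffe framework and is fine here since the integrand is nonnegative; the by-hand expansion above sidesteps any delicate invocation by using only that $\textgoth{C}\nabla\vect{X} \in L^2(D)$ and $\textgoth{C}\zeta:\zeta \geq 0$. Everything else — compactness, closedness of constraints, the circle/trace subtleties — is already furnished by Lemma~\ref{compactness}.
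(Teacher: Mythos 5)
Your proposal is correct and follows essentially the same route as the paper: the Direct Method, with compactness and weak closedness of $\mathcal{C}_p$ supplied by Lemma~\ref{compactness} and weak lower semicontinuity of $\mathcal{E}_p$ coming from convexity of the integrands (which the paper states in one line and you spell out, including the useful by-hand expansion handling the $H^1$ coefficient $\textgoth{C}$ via the uniform bounds in \eqref{coerc}). No gaps; your version is simply a more detailed writeup of the paper's argument.
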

\begin{proof}
    The thesis follows applying Lemma \ref{compactness} and noticing that the functional $\mathcal{E}_p$ is convex, hence lower semicontinuous.
 \end{proof}

\subsection{Variational analysis of \texorpdfstring{${\mathcal E}_p$}{E}}
We are now in position to prove the second main result of this section. In the following we will use the following notation: 
\[
({\rm div}\,\vect Y)^i=\frac{\partial \vect Y^i_1}{\partial u}+\frac{\partial \vect Y^i_2}{\partial v}, \quad i=1,2,3,
\]
and we denote with $\vect{\nu}_D$ the outer normal of the disc $D$.

First of all, we derive the first-order necessary conditions for the membrane, while keeping the boundary curve fixed.

\begin{theorem}\label{thm:vario_sup_no_curva}
Let $(\vect r,\vect X) \in \mathcal \mathcal{C}_p$ be a minimizer of $\mathcal{E}_p$. Then 
\begin{equation}\label{eq:solo_superficie}
{\rm div}(\textgoth{C}\nabla \vect{X}) = {\bf 0}, \quad \text{on $D$}. 
\end{equation} 
\end{theorem}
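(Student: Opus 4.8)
The plan is to perform an inner variation of the membrane variable $\vect X$ alone, keeping $\vect r$ fixed, so that the trace constraint \eqref{eq:traccia} is automatically preserved. Concretely, for any test field $\vect V \in C^\infty_c(D;\R^3)$ (equivalently $\vect V \in W^{1,2}_0(D;\R^3)$ by density) and $t\in\R$, the pair $(\vect r, \vect X + t\vect V)$ still lies in $\mathcal C_p$: conditions \eqref{eq:r_0}--\eqref{eq:velocita_1} involve only $\vect r$, and since $\vect V$ has zero trace on $\partial D$, we have $\widehat{\vect X + t\vect V}\circ c = \hat{\vect X}\circ c = \vect r$ a.e., so \eqref{eq:traccia} holds. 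Since the curve term $\int_0^{2\pi}|\vect r''|^2\,d\vartheta$ does not depend on $\vect X$, minimality of $(\vect r,\vect X)$ gives that $t\mapsto \frac{\mu}{2}\int_D \textgoth C\nabla(\vect X + t\vect V):\nabla(\vect X + t\vect V)\,dudv$ has a minimum at $t=0$.

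Next I would expand this quadratic-in-$t$ expression. Because $\textgoth C$ takes values in $\mathcal L(\R^{3\times 2};\R^{3\times 2})$ and is the elasticity tensor, it is symmetric in the appropriate sense so that $\textgoth C\nabla\vect X:\nabla\vect V = \textgoth C\nabla\vect V:\nabla\vect X$; hence
\[
\frac{d}{dt}\Big|_{t=0}\frac{\mu}{2}\int_D \textgoth C\nabla(\vect X + t\vect V):\nabla(\vect X + t\vect V)\,dudv = \mu\int_D \textgoth C\nabla\vect X:\nabla\vect V\,dudv = 0
\]
for all $\vect V\in W^{1,2}_0(D;\R^3)$. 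This is the weak formulation of \eqref{eq:solo_superficie}. Integrating by parts componentwise (using the divergence notation introduced just before the statement) moves the derivative off $\vect V$; the boundary term vanishes because $\vect V$ vanishes on $\partial D$, leaving $\int_D \dvg(\textgoth C\nabla\vect X)\cdot\vect V\,dudv = 0$ for all such $\vect V$. By the fundamental lemma of the calculus of variations, $\dvg(\textgoth C\nabla\vect X) = \vect 0$ on $D$, understood in the distributional sense.

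I do not expect a serious obstacle here; this is the standard Euler--Lagrange computation for a constrained quadratic functional, and the key structural point — that variations supported in the interior of $D$ are admissible precisely because the pointwise trace constraint only sees $\partial D$ — is exactly the feature the authors highlighted as the advantage of their formulation. The only points requiring a word of care are: (i) justifying the symmetry/self-adjointness of $\textgoth C$ used to combine the two cross terms, which follows from the standard symmetry of the linear elasticity tensor together with the coercivity hypothesis \eqref{coerc} guaranteeing the integrals are finite; (ii) the passage from $C^\infty_c(D;\R^3)$ to $W^{1,2}_0(D;\R^3)$ test functions, which is a routine density argument; and (iii) interpreting \eqref{eq:solo_superficie} in the correct weak sense, since $\textgoth C\in H^1(D;\mathcal L(\R^{3\times2};\R^{3\times2}))$ and $\vect X\in W^{1,2}$ only give $\textgoth C\nabla\vect X\in L^2$, so the divergence is a priori only a distribution.
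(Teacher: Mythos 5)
Your proposal is correct and follows essentially the same route as the paper: vary only $\vect X$ by $\vect X + t\vect V$ with $\vect V$ of zero trace (so the constraint set is preserved), differentiate at $t=0$ to obtain $\int_D \textgoth{C}\nabla\vect X:\nabla\vect V\,dudv=0$, and integrate by parts. Your additional remarks on the symmetry of $\textgoth{C}$ and on the distributional interpretation of the divergence are sensible points of care that the paper leaves implicit, but they do not change the argument.
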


\begin{proof}
Let $\sigma>0$. For any $t\in (-\sigma,\sigma)$ let 
\[
\vect X_t= \vect X  + t \vect V,
\]
be the variation of the membrane with fixed curve, where $\vect{V} \in C^1(\overline{D};\R^3)$ satisfies the trace constraint, namely
\[
    \hat{\vect V}\circ c  = \vect 0, \quad \text{on $[0, 2\pi]$}.
\]
Then, by minimality 
\[
\frac{{\rm d}}{{\rm d t}} \mathcal E_p[\vect r,\vect X_t]_{\big|_{t = 0}}=0,
\]
which easily reads as 
\[
\int_D \textgoth{C}\nabla \vect X: \nabla \vect{V}\, dudv=0.
\]
Integrating by parts we deduce that 
\[
\int_D {\rm div}\,(\textgoth{C}\nabla \vect{X}) \cdot \vect V\, dudv=0,
\]
which gives the thesis \eqref{eq:solo_superficie} just by using the arbitrariness of the test $\vect V$.
\end{proof}

Now, we want to find necessary conditions for any minimizer of $\mathcal E_p$. In order to do that, first of all we need to construct suitable variations of the boundary curve following the idea of Bernatzki and Ye \cite{bernatzki2001minimal}.

\begin{lemma}\label{lemma:variazione_curva}
	Let $\vect{r}\in W^{2,2}((0,2\pi);\R^3)$ be satisfying conditions \eqref{eq:r_0} - \eqref{eq:rprimo0} - \eqref{eq:velocita_1}. Let $\vect \eta \in C^2([0,2\pi];\R^3)$ be such that:
	\begin{itemize}
	    \item[(i)] $\vect{\eta}(0)=\vect{\eta}(2\pi)$;
	    \item[(ii)] $\vect{\eta}'(0)=\vect{\eta}'(2\pi)$;
	    \item[(iii)] $\vect r'\cdot \vect\eta' = 0$ on $[0, 2\pi]$.
	\end{itemize}
	Then there exist a family of curves $\{\vect r^t :t\in (-\varepsilon,\varepsilon) \} \subset W^{2,2}((0,2 \pi);\R^3)$ such that:
	\begin{itemize}
	    \item[(a)] $\vect{r}^0 = \vect r$ on $[0,2\pi]$;
	    \item[(b)] $\vect{r}^t$ satisfies conditions \eqref{eq:r_0} - \eqref{eq:rprimo0} - \eqref{eq:velocita_1} for any $t\in (-\varepsilon,\varepsilon)$;
	    \item[(c)] it holds
	    \[
	    \frac{\partial}{\partial t}|(\vect r^t)''|^2_{\big|_{t=0}}=2\vect r''\cdot \vect\eta'', \quad \text{a.e.\,on $(0,2\pi)$};
	    \]
	    \item[(d)] for any $\vect v,\vect w\in \R^3$ it holds
	    \[
	    \frac{\partial}{\partial t}|\vect v+t\vect w-\vect r^t|^2_{\big|_{t=0}}=2(\vect v-\vect r)\cdot (\vect w-\vect\eta), \quad \text{on $[0,2\pi]$}.
	    \]
	\end{itemize}
\end{lemma}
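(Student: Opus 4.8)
The plan is to build the perturbed curve $\vect{r}^t$ by prescribing its second derivative and integrating twice, choosing the free constants so that the clamping and periodicity conditions are preserved, and then correcting the parametrization so that the arc-length constraint $|(\vect{r}^t)'|=1$ holds exactly. Concretely, since $\vect{r}'\cdot\vect{\eta}'=0$ pointwise, the naive guess $\vect{r}+t\vect{\eta}$ already satisfies $|\vect{r}'+t\vect{\eta}'|^2 = 1 + t^2|\vect{\eta}'|^2$, so it fails \eqref{eq:velocita_1} only at second order. The idea is therefore to reparametrize: set
\[
s_t(\theta) = \int_0^{\theta}\bigl|\vect{r}'(\xi)+t\vect{\eta}'(\xi)\bigr|\,d\xi, \qquad L(t) = s_t(2\pi) = \int_0^{2\pi}\sqrt{1+t^2|\vect{\eta}'|^2}\,d\xi,
\]
and define $\vect{r}^t(\theta) = \bigl(\vect{r}+t\vect{\eta}\bigr)\bigl(\phi_t(\tfrac{L(t)}{2\pi}\theta)\bigr)$ where $\phi_t = s_t^{-1}$; up to the global scaling by $L(t)/(2\pi)$ this is the arc-length reparametrization of $\vect{r}+t\vect{\eta}$, rescaled to have domain $[0,2\pi]$. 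Then $|(\vect{r}^t)'| \equiv L(t)/(2\pi)$; since we actually want unit speed, one instead fixes the total length and argues that $L(t) = 2\pi + O(t^2)$ so a further rescaling by $2\pi/L(t)$ of the image is harmless for the first-variation computation, or — cleaner — one works with curves of length $L(t)$ and notes $L'(0)=0$. I would carry out the bookkeeping so that $\vect{r}^0=\vect{r}$ (giving (a)), the endpoint and endpoint-derivative matching for $\vect{\eta}$ from (i)--(ii) combined with those for $\vect{r}$ transfer to $\vect{r}^t$ (giving (b)), and the $W^{2,2}$ regularity is retained because $\vect{\eta}\in C^2$ makes $\phi_t$ smooth in $\theta$ and the composition of a $W^{2,2}$ map with a smooth diffeomorphism stays in $W^{2,2}$.

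Next I would compute the $t$-derivatives at $t=0$. For (d) this is immediate: $\vect{r}^t = \vect{r}+t\vect{\eta} + O(t^2)$ uniformly on $[0,2\pi]$ (the reparametrization $\phi_t(\tfrac{L(t)}{2\pi}\,\cdot\,)$ equals the identity at $t=0$ with vanishing $t$-derivative, because $L'(0)=0$ and $\partial_t s_t|_{t=0}=\int_0^{\theta}\vect{r}'\cdot\vect{\eta}'=0$ by hypothesis (iii)), hence $\partial_t\bigl|\vect{v}+t\vect{w}-\vect{r}^t\bigr|^2|_{t=0} = 2(\vect{v}-\vect{r})\cdot(\vect{w}-\partial_t\vect{r}^t|_{t=0}) = 2(\vect{v}-\vect{r})\cdot(\vect{w}-\vect{\eta})$. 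For (c) I need the second derivative: $(\vect{r}^t)'' $ differs from $\bigl(\vect{r}''+t\vect{\eta}''\bigr)$ composed with the reparametrization by terms involving $\phi_t'$ and $\phi_t''$; since at $t=0$ one has $\phi_0=\mathrm{id}$, $\phi_0'\equiv 1$, $\partial_t\phi_t'|_{t=0}=0$ (again from (iii), which kills the first-order change of the speed), the only surviving first-order term is $2\vect{r}''\cdot\vect{\eta}''$, as claimed. The key algebraic fact doing all the work is that hypothesis (iii), $\vect{r}'\cdot\vect{\eta}'=0$, forces $\tfrac{d}{dt}|\vect{r}'+t\vect{\eta}'|\big|_{t=0}=0$, so the arc-length correction is second-order and invisible in every first variation.

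The main obstacle, and the place requiring genuine care rather than routine computation, is the interplay between the reparametrization and the $W^{2,2}$ regularity: $\vect{r}$ is only twice weakly differentiable, so I cannot differentiate it pointwise, and I must check that (i) $\phi_t$ is genuinely a $C^1$ (indeed $C^2$ in $\theta$) diffeomorphism of $[0,2\pi]$ for small $t$ — which needs $|\vect{r}'+t\vect{\eta}'|$ bounded below away from $0$, true for $|t|$ small since $|\vect{r}'+t\vect{\eta}'|^2 = 1+t^2|\vect{\eta}'|^2\ge 1$ — and (ii) that the map $t\mapsto \vect{r}^t$ lands in $W^{2,2}$ with the stated pointwise $t$-derivative formulas holding \emph{a.e.}, which one gets by writing $(\vect{r}^t)''$ via the chain rule as a combination of $\vect{r}'\circ\phi_t$, $\vect{r}''\circ\phi_t$ with smooth-in-$\theta$ coefficients and then using dominated convergence / the continuity of translation in $L^2$ to pass the difference quotient to the limit. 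A secondary technical point is to justify that the periodicity of $\vect{\eta}$ and $\vect{\eta}'$ really yields \eqref{eq:rprimo0} for $\vect{r}^t$ after reparametrization; this follows because $s_t$ and $\phi_t$ are built from periodic integrands so that $\phi_t$ and $\phi_t'$ match at the two endpoints, and then the endpoint derivative of $\vect{r}^t$ is $(\vect{r}'+t\vect{\eta}')(\phi_t(\cdot))\,\phi_t'(\cdot)$ evaluated consistently at $0$ and $2\pi$. Once these regularity checkpoints are cleared, (a)--(d) drop out of the first-order Taylor expansions described above.
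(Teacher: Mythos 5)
Your construction is essentially the paper's: perturb linearly by $t\vect{\eta}$, observe that hypothesis (iii) makes the length defect and the change of speed second order in $t$, restore the constraints by an $O(t^2)$ scaling combined with an arc-length reparametrization, and verify that the reparametrization has vanishing first variation so that $\partial_t \vect{r}^t|_{t=0}=\vect{\eta}$, from which (c) and (d) follow. The only substantive difference is how the total length is pinned to $2\pi$: the paper applies the implicit function theorem to the auxiliary function $W(t,\delta)$ to produce a shrinking factor $(1-t^2\delta(t))$ multiplying $\vect{r}$ \emph{before} reparametrizing, whereas you reparametrize first and then rescale the image by the explicit factor $2\pi/L(t)$; both are valid, and yours dispenses with the IFT at the cost of a homothety applied to the whole curve (which, like the paper's own construction, moves the clamping point by an $O(t)$ amount, so both arguments really only verify closure and $C^1$-periodicity rather than clamping at $\vect{x}_0$ itself). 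One caution: of your two proposed remedies for the unit-speed condition, only the explicit rescaling by $2\pi/L(t)$ is acceptable. The family $\vect{r}^t$ must satisfy $|(\vect{r}^t)'|=1$ exactly for every $t$, because it is used as a family of admissible competitors in $\mathcal C'_p$ when differentiating $\mathcal E_p^{\lambda_h}$ along the variation; merely ``working with curves of length $L(t)$ and noting $L'(0)=0$'' would not deliver condition \eqref{eq:velocita_1} and hence not the lemma as stated.
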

\begin{proof}
For any $t,\delta\in \R$ let
\[
W(t,\delta)=\left\{\begin{array}{ll}
\displaystyle \frac{1}{t^2}\int_0^{2\pi}\bigg(\sqrt{(1-t^2 \delta)^2 + t^2 |\vect \eta'|^2}-1\bigg)\,d\theta & \text{if $t\ne 0$}\\
\\
\displaystyle -2\pi\delta+\int_0^{2\pi}|\vect \eta'|^2\,d\theta & \text{if $t=0$}.
\end{array}\right.
\]
It is easy to see that $W \in C^1(\R^2)$ and that $W(0,\delta_0)=0$ where
\[
\delta_0=\frac{1}{4\pi}\int_0^{2\pi}|\vect \eta'|^2\,d\theta.
\]
Moreover,
\[
\frac{\partial W}{\partial \delta}(0,\delta_0)=-2\pi\ne 0.
\]
Using the Implicit function Theorem we get the existence of $\varepsilon>0$ and $\delta \in C^1(-\varepsilon,\varepsilon)$ such that $W\left(t,\delta(t)\right)=0$ for any $t\in (-\varepsilon,\varepsilon)$ and $\delta(0)=\delta_0$. We set
\[
\tilde{\vect r}^t(s)=(1-t^2\delta(t))\vect r(s)+t\vect\eta(s), \quad s\in [0,2\pi].
\]
It is easy to see that the curves $\tilde{\vect r}^t$ belong to $W^{2,2}((0,2\pi);\R^3)$ and satisfy conditions \eqref{eq:r_0} and \eqref{eq:rprimo0}. Moreover, using the facts that $|\vect r'|=1$ and $\vect r'\cdot \vect \eta'=0$ everywhere, we get, by explicit computation,
\[
\begin{aligned}
|(\tilde{\vect r}^t)'(s)|&=\sqrt{(1-t^2 \delta(t))^2 |\vect r'(s)|^2 + t^2 |\vect \eta '(s)|^2 + 2 t (1 - t^2 \delta(t))\vect r'(s)\cdot \vect\eta'(s)}\\
&=\sqrt{(1-t^2 \delta(t))^2 + t^2 |\vect \eta '(s)|^2}.
\end{aligned}
\]
As a consequence, we obtain 
\[
\begin{aligned}
\mathcal{L}(\tilde{\vect r}^t)&=\int_0^{2 \pi}|(\tilde{\vect r}^t)'|\,ds\\
&=\int_0^{2 \pi}\sqrt{(1-t^2 \delta(t))^2 + t^2 |\vect \eta '|^2}\,ds\\
&=t^2W(t,\delta(t))+2\pi\\
&=2\pi.
\end{aligned}
\]
Notice that (if necessary choose a smaller $\varepsilon$), 
\begin{equation}\label{inv}
|(\tilde{\vect r}^t)'(s)|\ne 0.
\end{equation}
For any $(t,s) \in (-\varepsilon,\varepsilon)\times [0,2\pi]$ let 
\[
\beta(t,s)=\int_0^s |(\tilde{\vect r}^t)'|\,d\sigma=\int_0^s \sqrt{(1-t^2\delta(t))^2+t^2|\vect\eta'|^2}\,d\sigma. 
\]
Using \eqref{inv}, we can say that for any $t\in (-\varepsilon,\varepsilon)$ the function $\beta(t,\cdot)$ is invertible on $[0,2\pi]$: let us denote by $\alpha(t,\cdot)$ the inverse of $\beta(t,\cdot)$. Finally, for any $(t,\theta) \in (-\varepsilon,\varepsilon)\times [0,2\pi]$ we set 
\[
\vect r^t(\theta)=\tilde{\vect r}^t(\alpha(t,\theta)).
\]
First of all, by composition we get $\vect r^t \in W^{2,2}((0,2\pi);\R^3)$ for every $t\in (-\varepsilon,\varepsilon)$. Moreover, since $\beta(0,s)=s$ we immediately deduce that $\alpha(0,\theta)=\theta$. Hence
\[
\vect r^0(\theta)=\tilde{\vect r}^0(\alpha(0,\theta))=\tilde{\vect r}^0(\theta)=\vect r(\theta)
\]
which is (a). Next, since $\alpha(t,0)=0$ and $\alpha(t,2\pi)=2\pi$, then
\[
\begin{aligned}
\vect r^t(0)&=\tilde{\vect r}^t(\alpha(t,0))\\
&=\tilde{\vect r}^t(0)\\
&=(1-t^2\delta(t))^2\vect r(0)+t\vect\eta(0)\\
&=(1-t^2\delta(t))^2\vect r(2\pi)+t\vect\eta(2\pi)\\
&=\tilde{\vect r}^t(2\pi)\\
&=\tilde{\vect r}^t(\alpha(t,2\pi))\\
&=\vect r^t(2\pi).
\end{aligned}
\]
Furthermore, to prove $|(r^t)'|=1$, we compute
\[
\begin{aligned}
(\vect r^t)'(\theta)&=\frac{\partial}{\partial \theta}\tilde{\vect r}^t(\alpha(t,\theta))\\
&=(\tilde{\vect r}^t)'(\alpha(t,\theta))\frac{\partial}{\partial \theta}\alpha(t,\theta)\\
&=(\tilde{\vect r}^t)'(\alpha(t,\theta)\left(\frac{\partial}{\partial s}\beta(t,s)_{|_{s=\alpha(t,\theta)}}\right)^{-1}\\
&=(\tilde{\vect r}^t)'(\alpha(t,\theta)|(\tilde{\vect r}^t)'(\alpha(t,\theta)|^{-1},
\end{aligned}
\]
deducing immediately the thesis.
Finally, to conclude the proof of (b), we compute
\[
(\vect r^t)'(\theta)=\frac{(1-t^2\delta(t))\vect r'(\alpha(t,\theta))+t\vect\eta'(\alpha(t,\theta))}{\sqrt{(1-t^2\delta(t))^2+t^2|\vect\eta'(\alpha(t,\theta))|^2}},
\]
obtaining
\[
\begin{aligned}
(\vect r^t)'(0)&=\frac{(1-t^2\delta(t))\vect r'(\alpha(t,0))+t\vect\eta'(\alpha(t,0))}{\sqrt{(1-t^2\delta(t))^2+t^2|\vect\eta'(\alpha(t,0))|^2}}\\
&=\frac{(1-t^2\delta(t))\vect r'(0)+t\vect\eta'(0)}{\sqrt{(1-t^2\delta(t))^2+t^2|\vect\eta'(0)|^2}}\\
&=\frac{(1-t^2\delta(t))\vect r'(2\pi)+t\vect\eta'(2\pi)}{\sqrt{(1-t^2\delta(t))^2+t^2|\vect\eta'(2\pi)|^2}}\\
&=\frac{(1-t^2\delta(t))\vect r'(\alpha(t,2\pi))+t\vect\eta'(\alpha(t,2\pi))}{\sqrt{(1-t^2\delta(t))^2+t^2|\vect\eta'(\alpha(t,2\pi))|^2}}\\
&=(\vect r^t)'(2\pi),
\end{aligned}
\]
which gives (b). We now show (c). First of all, using (a) we have, for a.e.\,$\theta \in (0,2\pi)$, 
\[
\frac{\partial}{\partial t}|(\vect r^t)''|^2_{\big|_{t=0}}=2\vect r''\cdot\frac{\partial^2}{\partial \theta^2}\frac{\partial \vect r^t}{\partial t}_{\big|_{t=0}}.
\]
It is sufficient to prove that 
\begin{equation}\label{final}
\frac{\partial \vect r^t}{\partial t}_{\big|_{t=0}}=\vect\eta.
\end{equation}
By direct computation we have
\[
\begin{aligned}
\frac{\partial \vect r^t}{\partial t}(\theta)_{\big|_{t=0}}&=\frac{\partial }{\partial t}\tilde{\vect r}^t(\alpha(t,\theta))_{\big|_{t=0}}\\
&=\vect\eta(\theta)+\vect r'(\theta)\frac{\partial \alpha}{\partial t}(t,\theta)_{\big|_{t=0}}.
\end{aligned}
\]
Now, since $\alpha(t, \cdot)$ is the inverse of $\beta(t, \cdot)$, using the chain rule, we can differentiate the relation $\alpha(t,\beta(t,s))=s$ with respect to $t$ getting
\[
\begin{aligned}
0=\frac{\partial \alpha}{\partial t}(t,\beta(t,s))_{\big|_{t=0}}&=\frac{\partial \alpha}{\partial t}(t,s)_{\big|_{t=0}}+\frac{\partial \alpha}{\partial \theta}(0,\theta)\frac{\partial \beta}{\partial t}(t,\theta)_{\big|_{t=0}}\\
&=\frac{\partial \alpha}{\partial t}(t,s)_{\big|_{t=0}}.
\end{aligned}
\]
Hence, we immediately deduce \eqref{final} and this ends the proof of (c). It remains to prove (d). Thanks to \eqref{final} we have
\[
\frac{\partial}{\partial t}|\vect{v}+t\vect{w}-\vect r^t|^2_{\big|_{t=0}}=2(\vect{v}-\vect r)\cdot \left(\vect{w}-\frac{\partial \vect r^t}{\partial t}_{\big|_{t=0}}\right)=2(\vect{v}-\vect r)\cdot (\vect{w}-\vect\eta),
\]
which concludes the proof.
\end{proof}

To derive the first-order necessary conditions for the entire system, we introduce an auxiliary functional. Hence, for any $\lambda>0$, let $\mathcal E_p^\lambda \colon \mathcal C'_p \to \R$ be given by
\[
    \mathcal E_p^\lambda\left[\vect{r},\vect{X}\right]= \int_0^{2 \pi} \abs{\vect{r}''}^2\, d\vartheta+ \frac{\mu}{2}\int_D \textgoth{C}\nabla \vect{X}:\nabla \vect{X}\, dudv + \lambda \int_0^{2 \pi} \abs{\hat{ \vect X} \circ c - \vect r}^2\, d\vartheta,
\]
where
\[
    \mathcal C'_p= \left\{(\vect{r}, \vect{X})\in \mathcal W_p: \eqref{eq:r_0} - \eqref{eq:rprimo0} - \eqref{eq:velocita_1} \hbox{ hold}\right\}.
\]
In other words, the constraint \eqref{eq:traccia} is penalized through the $L^2$-distance between $\hat{ \vect X}$ and $\vect r$. 

\begin{proposition}
For every $\lambda >0$, the functional $\mathcal E_p^\lambda$ has a minimizer. 
\end{proposition}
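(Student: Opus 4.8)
The plan is to apply the Direct Method of the Calculus of Variations to $\mathcal{E}_p^\lambda$ over the constraint set $\mathcal{C}'_p$, essentially repeating the argument of Lemma \ref{compactness} and Theorem \ref{th:minimo}, with the only new ingredient being the control of the penalization term. First I would take a minimizing sequence $(\vect r_h,\vect X_h)\in\mathcal C'_p$ with $\mathcal E_p^\lambda[\vect r_h,\vect X_h]\le c$; since all three terms of $\mathcal E_p^\lambda$ are nonnegative, this bounds $\|\vect r_h''\|_{L^2}$, the term $\int_D\textgoth{C}\nabla\vect X_h:\nabla\vect X_h$, and $\lambda\int_0^{2\pi}|\hat{\vect X}_h\circ c-\vect r_h|^2$ separately. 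Exactly as in Lemma \ref{compactness}, conditions \eqref{eq:velocita_1} and \eqref{eq:r_0} give $\|\vect r_h'\|_{L^2}=\sqrt{2\pi}$ and then $\|\vect r_h\|_{L^2}\le c'$, so $\vect r_h$ is bounded in $W^{2,2}$; the coercivity \eqref{coerc} bounds $\|\nabla\vect X_h\|_{L^2}$. Passing to a subsequence, $\vect r_h\rightharpoonup\vect r$ in $W^{2,2}$, hence $\vect r_h\to\vect r$ strongly in $C^1$, so \eqref{eq:r_0}--\eqref{eq:rprimo0}--\eqref{eq:velocita_1} pass to the limit and $(\vect r,\cdot)$ is admissible for $\mathcal C'_p$ once we have the $\vect X$-component.

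The one genuinely different point from Lemma \ref{compactness} is obtaining a bound on $\|\vect X_h\|_{L^2}$: there we used the trace constraint \eqref{eq:traccia} directly, whereas now it only holds approximately. But the penalization term still does the job: from $\lambda\int_0^{2\pi}|\hat{\vect X}_h\circ c-\vect r_h|^2\le c$ we get $\|\hat{\vect X}_h\circ c\|_{L^2((0,2\pi))}\le \|\vect r_h\|_{L^2}+\sqrt{c/\lambda}$, hence $\|\hat{\vect X}_h\|_{L^2(\partial D)}$ is bounded. Then I would run the same extension argument: let $\vect Y_h=\mathrm{Ext}\,\hat{\vect X}_h$ solve the Dirichlet problem on $D$ with boundary datum $\hat{\vect X}_h$, set $\vect Z_h=\vect X_h-\vect Y_h\in W^{1,2}_0(D;\R^3)$, and combine the Poincaré inequality with the continuity of the harmonic extension $W^{1/2,2}(\partial D)\to W^{1,2}(D)$ to get $\|\vect X_h\|_{L^2}\le c_1\|\nabla\vect X_h\|_{L^2}+c_2\|\hat{\vect X}_h\|_{W^{1/2,2}(\partial D)}$. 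Here one should be slightly careful: the harmonic extension needs $\hat{\vect X}_h$ bounded in $W^{1/2,2}(\partial D)$, not merely in $L^2$; this follows because $\hat{\vect X}_h$ is the trace of $\vect X_h$ which is already bounded in $W^{1,2}(D)$, so $\|\hat{\vect X}_h\|_{W^{1/2,2}(\partial D)}\le C\|\vect X_h\|_{W^{1,2}(D)}$ — but that is circular unless we first bound $\|\vect X_h\|_{L^2}$. The clean fix is to bound $\|\vect Y_h\|_{W^{1,2}(D)}$ by $\|\hat{\vect X}_h\|_{W^{1/2,2}(\partial D)}\le C(\|\nabla\hat{\vect X}_h\|\text{-type seminorm})$; alternatively, and more simply, use that $W^{1,2}(D)/\R\hookrightarrow$ via $\|\vect X_h\|_{L^2}\le C(\|\nabla\vect X_h\|_{L^2}+|\!\int_{\partial D}\hat{\vect X}_h|)$ — a Poincaré–Wirtinger inequality with boundary mean — where the boundary integral is controlled by $\|\hat{\vect X}_h\|_{L^1(\partial D)}\le C\|\vect r_h\|_{L^2}+C\sqrt{c/\lambda}$. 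Either way $\|\vect X_h\|_{W^{1,2}(D)}\le c_4$, so up to a further subsequence $\vect X_h\rightharpoonup\vect X$ in $W^{1,2}(D;\R^3)$.

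It remains to pass to the limit in the functional. Weak $W^{1,2}$-convergence of $\vect X_h$, together with $\textgoth C\in H^1(D)\hookrightarrow$ and the nonnegativity/convexity in $\zeta$ of $\zeta\mapsto\textgoth C(u,v)\zeta:\zeta$, gives weak lower semicontinuity of the membrane term (convex integrand, standard); $\|\vect r''\|_{L^2}^2$ is weakly lsc in $W^{2,2}$; and for the penalization term the trace operator $W^{1,2}(D)\to L^2(\partial D)$ is compact, so $\hat{\vect X}_h\circ c\to\hat{\vect X}\circ c$ strongly in $L^2((0,2\pi))$, while $\vect r_h\to\vect r$ uniformly, whence $\int_0^{2\pi}|\hat{\vect X}_h\circ c-\vect r_h|^2\to\int_0^{2\pi}|\hat{\vect X}\circ c-\vect r|^2$. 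Therefore $\mathcal E_p^\lambda[\vect r,\vect X]\le\liminf_h\mathcal E_p^\lambda[\vect r_h,\vect X_h]=\inf_{\mathcal C'_p}\mathcal E_p^\lambda$, and since $(\vect r,\vect X)\in\mathcal C'_p$ the infimum is attained. The only mild obstacle, as noted, is the bookkeeping for the $\|\vect X_h\|_{L^2}$ bound — making sure the boundary datum is controlled in the right trace norm without circularity — but this is routine once one uses a Poincaré inequality keyed to the boundary mean rather than the harmonic extension.
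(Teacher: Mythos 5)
Your proposal is correct and follows essentially the same route as the paper: the direct method, with compactness obtained by adapting Lemma \ref{compactness} and lower semicontinuity of the three terms of $\mathcal E_p^\lambda$. The only (harmless) deviations are that you spell out how to recover the $L^2$ bound on $\vect X_h$ without the exact trace constraint \eqref{eq:traccia} --- a point the paper hides behind ``a similar argument as in the proof of Lemma \ref{compactness}'' --- and that you invoke compactness of the trace operator to get strong $L^2(\partial D)$ convergence of the boundary values, where the paper settles for weak continuity of the trace plus weak lower semicontinuity of the $L^2$ norm; both work.
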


\begin{proof}
Take a minimizing sequence for $\mathcal E_p^\lambda$, that is a sequence $(\vect r_h,\vect X_h)$ in $\mathcal C'_p$ with
\[
\lim_{h\to+\infty}\mathcal E_p^\lambda[\vect r_h,\vect X_h]=\inf_{\mathcal C'_p}\mathcal E_p^\lambda.
\]
It is straightforward to see that $\inf_{\mathcal C'_p}\mathcal E_p^\lambda<+\infty$. Hence, we can suppose that $\mathcal{E}_p^\lambda[\vect{r}_h, \vect{X}_h] \le c$ for some $c\ge 0$. Using a similar argument as in the proof of Lemma \ref{compactness} we deduce that there exists $(\vect{r}, \vect{X})\in \mathcal C'_p$ such that, up to a subsequence, $(\vect{r}_h, \vect{X}_h) \stackrel{}{\rightharpoonup } (\vect{r}, \vect{X})$ weakly in $\mathcal C'_p$. The weak lower semicontinuity of the first two terms in $\mathcal E_p^\lambda$ in $\mathcal{W}_p$ is standard (for the membrane contribution we refer to \cite{acerbi1984semicontinuity}). Concerning the trace term, since the trace operator is linear and continuous, it is also weakly continuous, and then $\hat{\vect{X}}_h\rightharpoonup \hat{\vect X}$ in $L^2(\partial D;\R^3)$. As a consequence, $\hat{\vect{X}}_h\circ c -\vect r_h\rightharpoonup \hat{\vect X} \circ c-\vect r$ in $L^2(\partial D;\R^3)$. By the weak lower semicontinuity of the $L^2$-norm we get 
\begin{equation}\label{lsc-trace}
\int_0^{2 \pi} \abs{\hat{ \vect X} \circ c - \vect r}^2\, d\vartheta \le \liminf_{h\to+\infty} \int_0^{2 \pi} \abs{\hat{ \vect X}_h \circ c - \vect r_h}^2\, d\vartheta.
\end{equation}
The thesis follows just applying the direct method of the Calculus of Variations.
\end{proof}

\begin{proposition}
\label{th:gamma_conv}
Let $(\lambda_h)$ be a positive sequence with $\lambda_h \to +\infty$ and let $(\vect{r}_h, \vect{X}_h) \in \mathcal{C}'_p$ with $\mathcal{E}_p^{\lambda_h}[\vect{r}_h, \vect{X}_h]\leq c$, for some $c>0$. Then, there exists $(\vect{r}, \vect{X})\in \mathcal{C}_p$ such that, up to subsequence, $(\vect{r}_h, \vect{X}_h) \rightharpoonup  (\vect{r}, \vect{X})$ weakly in $\mathcal W_p$. Moreover, the family $\{\mathcal{E}_p^{\lambda_h}\}_{h\in \N}$ $\Gamma$-converges to
\[
\tilde{\mathcal E}_p:=\left\{\begin{array}{ll}
\mathcal E_p & \text{on $\mathcal C_p$}\\
\\
+\infty & \text{on $\mathcal C'_p \setminus \mathcal C_p$}
\end{array}\right.
\]
as $h\to+\infty$ with respect to the weak topology of $\mathcal W_p$, namely:
\begin{itemize}
    \item[(a)] {\bf(liminf inequality)} For any $(\vect{r},\vect{X}) \in \mathcal{C}'_p$ and for any $(\vect{r}_h,\vect{X}_h) \in \mathcal{C}'_p$ such that $(\vect{r}_h,\vect{X}_h) \stackrel{}{\rightharpoonup} (\vect{r},\vect{X})$ weakly in $\mathcal W_p$, it holds
\begin{equation}\label{liminf}
\tilde{\mathcal E}_p[\vect{r},\vect{X}]\leq \liminf_{h\to+\infty} \mathcal{E}_p^{\lambda_{h}} [\vect{r}_h,\vect{X}_h].
\end{equation}
 \item[(b)]{\bf(limsup inequality)} For any $(\vect{r},\vect{X}) \in \mathcal{C}'_p$ there exists $(\vect{\overline{r}}_h, \vect{\overline{X}}_h) \in \mathcal{C}'_p$ with $(\vect{\overline{r}}_h, \vect{\overline{X}}_h) \stackrel{}{\rightharpoonup} (\vect{r},\vect{X})$ weakly in $\mathcal W_p$ such that
\begin{equation}\label{limsup}
\tilde{\mathcal E}_p[\vect{r},\vect{X}]\geq\limsup_{h\to+\infty} \mathcal{E}_p^{\lambda_{h}}[\vect{\overline{r}}_h, \vect{\overline{X}}_h].
\end{equation}
\end{itemize}
\end{proposition}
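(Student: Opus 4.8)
The plan is to treat separately the compactness statement and the two $\Gamma$-convergence inequalities, all of which hinge on the single fact that a bound on $\mathcal E_p^{\lambda_h}$ forces the trace defect $\int_0^{2\pi}|\hat{\vect X}\circ c-\vect r|^2\,d\vartheta$ to disappear in the limit. For the compactness part I would simply reprise the proof of Lemma \ref{compactness}: from $\mathcal E_p^{\lambda_h}[\vect{r}_h,\vect{X}_h]\le c$ one reads off $\|\vect{r}_h''\|_{L^2}\le c$, the coercivity \eqref{coerc} bounds $\|\nabla\vect{X}_h\|_{L^2}$, the clamping \eqref{eq:r_0} together with $|\vect{r}_h'|=1$ bounds $\|\vect{r}_h\|_{W^{2,2}}$, and the harmonic-extension plus Poincar\'e argument bounds $\|\vect{X}_h\|_{L^2}$. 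Hence, up to a subsequence, $(\vect{r}_h,\vect{X}_h)\rightharpoonup(\vect{r},\vect{X})$ in $\mathcal W_p$ with $\vect{r}_h\to\vect{r}$ in $C^1$, so \eqref{eq:r_0} - \eqref{eq:rprimo0} - \eqref{eq:velocita_1} pass to the limit and $(\vect{r},\vect{X})\in\mathcal C'_p$. To upgrade to $(\vect{r},\vect{X})\in\mathcal C_p$ I observe that $\int_0^{2\pi}|\hat{\vect X}_h\circ c-\vect{r}_h|^2\,d\vartheta\le c/\lambda_h\to0$; since the trace operator is weakly continuous and $\vect{r}_h\to\vect{r}$ uniformly, $\hat{\vect X}_h\circ c-\vect{r}_h\rightharpoonup\hat{\vect X}\circ c-\vect{r}$ in $L^2$, and weak lower semicontinuity of the $L^2$-norm (exactly as in \eqref{lsc-trace}) gives that the limit defect vanishes, i.e. \eqref{eq:traccia} holds.

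For the liminf inequality, let $(\vect{r}_h,\vect{X}_h)\rightharpoonup(\vect{r},\vect{X})$ in $\mathcal W_p$; I may assume the liminf is finite and, after passing to a subsequence, a limit. If $(\vect{r},\vect{X})\in\mathcal C_p$, I discard the nonnegative penalization term and invoke the weak lower semicontinuity of $\int_0^{2\pi}|\vect{r}''|^2\,d\vartheta$ (convexity) and of the membrane term (quadratic, convex in the gradient, with Carath\'eodory coefficient $\textgoth C$, as in \cite{acerbi1984semicontinuity}), obtaining $\liminf_h\mathcal E_p^{\lambda_h}[\vect{r}_h,\vect{X}_h]\ge\mathcal E_p[\vect{r},\vect{X}]=\tilde{\mathcal E}_p[\vect{r},\vect{X}]$. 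If instead $(\vect{r},\vect{X})\in\mathcal C'_p\setminus\mathcal C_p$, then $\int_0^{2\pi}|\hat{\vect X}\circ c-\vect{r}|^2\,d\vartheta>0$; were the liminf along the subsequence finite, the penalization bound would force $\int_0^{2\pi}|\hat{\vect X}_h\circ c-\vect{r}_h|^2\,d\vartheta\to0$ and, by the same weak-lsc argument as above, the limit defect would be zero, a contradiction, so the liminf is $+\infty=\tilde{\mathcal E}_p[\vect{r},\vect{X}]$. For the limsup inequality I would take the constant recovery sequence $(\vect{\overline{r}}_h,\vect{\overline{X}}_h)\equiv(\vect{r},\vect{X})$: if $(\vect{r},\vect{X})\in\mathcal C'_p\setminus\mathcal C_p$ there is nothing to prove since $\tilde{\mathcal E}_p=+\infty$, while if $(\vect{r},\vect{X})\in\mathcal C_p$ then \eqref{eq:traccia} makes the penalization term vanish identically in $h$, so $\mathcal E_p^{\lambda_h}[\vect{r},\vect{X}]=\mathcal E_p[\vect{r},\vect{X}]=\tilde{\mathcal E}_p[\vect{r},\vect{X}]$ for every $h$.

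The only point requiring a little care is the weak lower semicontinuity of $\frac{\mu}{2}\int_D\textgoth C\nabla\vect{X}:\nabla\vect{X}\,dudv$ when $\textgoth C$ is merely $H^1$ in $(u,v)$: here I would note that $(u,v,\zeta)\mapsto\textgoth C(u,v)\zeta:\zeta$ is a nonnegative Carath\'eodory integrand, convex (indeed quadratic, with positive semidefinite symmetric part by \eqref{coerc}) in $\zeta$, so the classical semicontinuity theorems for integral functionals apply. Everything else is a routine reprise of Lemma \ref{compactness} and of the preceding proposition, the argument being a standard penalization $\Gamma$-convergence scheme.
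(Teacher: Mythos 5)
Your proposal is correct and follows essentially the same route as the paper: compactness by reprising Lemma \ref{compactness} plus the bound $\int_0^{2\pi}|\hat{\vect X}_h\circ c-\vect r_h|^2\,d\vartheta\le c/\lambda_h$ and the weak lower semicontinuity of the trace defect, the liminf inequality by discarding the nonnegative penalization and using weak lower semicontinuity of $\mathcal E_p$, and the limsup inequality via the constant recovery sequence. Your explicit case analysis in the liminf step (and the observation that the case $(\vect r,\vect X)\in\mathcal C'_p\setminus\mathcal C_p$ in the limsup is vacuous) is slightly more carefully organized than the paper's write-up, but it is the same penalization $\Gamma$-convergence argument.
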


\begin{proof}

Let $(\vect{r}_h, \vect{X}_h) \in \mathcal{C}'_p$ with $\mathcal{E}_p^{\lambda_h}[\vect{r}_h, \vect{X}_h]\leq c$, for some $c>0$. Since $\mathcal E_p \le \mathcal E_p^{\lambda_h}$ we can apply a similar argument as in Lemma \ref{compactness} and then, up to a subsequence, $(\vect{r}_h, \vect{X}_h) \rightharpoonup  (\vect{r}, \vect{X})$ weakly in $\mathcal W_p$, for some $(\vect r,\vect X) \in \mathcal W_p$ where $\vect r$ satisfies conditions \eqref{eq:r_0} - \eqref{eq:rprimo0} - \eqref{eq:velocita_1}. Next,
\[
\int_0^{2 \pi} \abs{\hat{\vect X}_h\circ c - \vect r_h}^2\, d\vartheta \le \frac{c}{\lambda_h}.
\]
Passing to the limit as $h\to+\infty$ and using \eqref{lsc-trace} we obtain
\[
\int_0^{2 \pi} \abs{\hat{\vect X}\circ c  - \vect r}^2\, d\vartheta=0
\]
which implies the trace constraint \eqref{eq:traccia}. In particular, $(\vect r,\vect X) \in \mathcal C_p$. Let us now show the liminf inequality \eqref{liminf}. Let $(\vect{r},\vect{X}) \in \mathcal{C}'_p$ and for any $(\vect{r}_h,\vect{X}_h) \in \mathcal{C}'_p$ such that $(\vect{r}_h,\vect{X}_h) \stackrel{}{\rightharpoonup} (\vect{r},\vect{X})$ weakly in $\mathcal W_p$. Up to extracting a subsequence, we can suppose that the liminf inequality \eqref{liminf} is actually a limit. As a consequence, we can assume that $\mathcal{E}_p^{\lambda_h}[\vect{r}_h, \vect{X}_h]\leq c$ for some $c>0$, otherwise \eqref{liminf} becomes trivially satisfied. Then $(\vect r_h,\vect X_h) \in \mathcal C_p$ and $(\vect r,\vect X) \in \mathcal C_p$ as well. Therefore \eqref{liminf} follows from the weak lower semicontinuity of $\mathcal E_p$, i.e.
\[
\mathcal E_p[\vect{r},\vect{X}]\leq \liminf_{h\to+\infty}\mathcal E_p[\vect{r}_h,\vect{X}_h]\leq \liminf_{h\to+\infty} \mathcal{E}_p^{\lambda_{h}}[\vect r_h,\vect X_h].
\]
In order to prove the limsup inequality \eqref{limsup}, for any $(\vect r,\vect X) \in \mathcal C_p'$ we simply choose $(\vect{\overline{r}}_h, \vect{\overline{X}}_h)=(\vect r,\vect X)$. If $(\vect r,\vect X) \in \mathcal C_p$ then $\mathcal E_p^{\lambda_h}[\vect{\overline{r}}_h, \vect{\overline{X}}_h]=\mathcal E_p[\vect r, \vect X]$ from which
\[
\mathcal E_p[\vect r, \vect X]=\limsup_{h\to+\infty}\mathcal E_p^{\lambda_h}[\vect{\overline{r}}_h, \vect{\overline{X}}_h].
\]
On the other hand, if $(\vect r,\vect X) \in \mathcal C'_p\setminus \mathcal C_p$, then
\[
\int_0^{2 \pi} \abs{\hat{\vect{\overline X}}_h\circ c  - \vect{\overline r}_h}^2\, d\vartheta=\int_0^{2 \pi} \abs{\hat{\vect X}\circ c - \vect r}^2\, d\vartheta=m>0.
\]
Then
\[
\limsup_{h\to+\infty}\mathcal E_p^{\lambda_h}[\vect{\overline{r}}_h, \vect{\overline{X}}_h]\ge m\limsup_{h\to+\infty}\lambda_h=+\infty,
\]
and this yields the conclusion.
\end{proof}

We are now in position to show the main result of this section. We first introduce
\[
\mathcal M_p=\{\text{$(\vect r,\vect X) \in \argmin\mathcal E_p$ : there exist $\lambda_h\to +\infty$ and $(\vect r_h,\vect X_h) \in \argmin \mathcal E_p^{\lambda_h}$ with $(\vect r_h,\vect X_h)\rightharpoonup (\vect r,\vect X)$ weakly in $\mathcal W_p$}\},
\]
and we show that

\begin{theorem}\label{thm:system_tot}
Let $(\vect r,\vect X) \in \mathcal M_p$. Then, $\vect r \in C^3([0,2\pi];\R^3)$, and 
\begin{equation}\label{eq:system_tot2}
2\vect r'''+2|\vect r''|^2\vect r'+\mu \vect F_\perp, \quad \text{on $[0,2\pi]$}
\end{equation}
where 
\begin{equation}\label{eq:F_def}
    \vect F(\theta)=\int_0^\theta(\textgoth{C}\nabla \vect{X} \circ c)\vect{\nu}_D\,d\theta,
\end{equation}
and the subscript $\perp$ stands for the vertical part with respect to $\vect r'$. 
\end{theorem}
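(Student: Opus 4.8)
The plan is to obtain \eqref{eq:system_tot2} from a first variation of $\mathcal E_p$ along the families of Lemma~\ref{lemma:variazione_curva}, compensating the boundary displacement these induce on the membrane by a harmonic extension, and then removing the arc-length Lagrange multiplier. \textbf{Step 1 (regularity of $\vect X$ near $\partial D$; $\vect F$ well defined).} By Proposition~\ref{th:gamma_conv} every $(\vect r,\vect X)\in\mathcal M_p$ minimizes $\mathcal E_p$, hence by Theorem~\ref{thm:vario_sup_no_curva} $\mathrm{div}(\textgoth{C}\nabla\vect X)=\vect 0$ in $D$, while $\hat{\vect X}=\vect r\circ c^{-1}\in W^{2,2}(\partial D;\R^3)$ by \eqref{eq:traccia}. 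Since \eqref{coerc} is a (strong) ellipticity condition and $\textgoth{C}\in H^1(D)\subset\mathrm{VMO}$, elliptic regularity up to the smooth boundary for this linear system gives $\vect X\in H^2(D;\R^3)$; thus the trace of $\textgoth{C}\nabla\vect X$ on $\partial D$ lies in $L^q(\partial D)$ for all $q<\infty$, so $\vect f:=(\textgoth{C}\nabla\vect X\circ c)\vect\nu_D\in L^q((0,2\pi);\R^3)$ and $\vect F$ in \eqref{eq:F_def} is well defined and of class $C^{0,\alpha}$ for all $\alpha<1$. Integrating $\mathrm{div}(\textgoth{C}\nabla\vect X)=\vect 0$ over $D$ gives $\vect F(2\pi)=\int_{\partial D}(\textgoth{C}\nabla\vect X)\vect\nu_D\,d\mathcal H^1=\vect 0=\vect F(0)$.

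\textbf{Step 2 (weak equation for the curve).} Fix $\vect\eta\in C^2([0,2\pi];\R^3)$ satisfying (i)--(iii) of Lemma~\ref{lemma:variazione_curva} and let $\{\vect r^t\}$ be the family it provides. Put $\vect X^t:=\vect X+\mathrm{Ext}\big((\vect r^t-\vect r)\circ c^{-1}\big)$ with $\mathrm{Ext}$ the harmonic extension. Then $(\vect r^t,\vect X^t)\in\mathcal C_p$ (equations \eqref{eq:r_0}--\eqref{eq:velocita_1} hold by part (b) of the Lemma and \eqref{eq:traccia} by construction, since $\widehat{\vect X^t}\circ c=\hat{\vect X}\circ c+(\vect r^t-\vect r)=\vect r^t$), $\vect X^0=\vect X$, and by linearity of $\mathrm{Ext}$ and \eqref{final}, $t\mapsto\vect X^t$ is differentiable at $0$ with $\tfrac{d}{dt}\vect X^t|_{t=0}=\mathrm{Ext}(\vect\eta\circ c^{-1})=:\vect V$, so $\hat{\vect V}\circ c=\vect\eta$. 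Minimality of $(\vect r,\vect X)$ forces $\tfrac{d}{dt}\mathcal E_p[\vect r^t,\vect X^t]|_{t=0}=0$; using part (c) of Lemma~\ref{lemma:variazione_curva} for the bending term and, for the membrane term, $\tfrac{d}{dt}\big(\tfrac\mu2\int_D\textgoth{C}\nabla\vect X^t:\nabla\vect X^t\big)|_{t=0}=\mu\int_D\textgoth{C}\nabla\vect X:\nabla\vect V=\mu\int_{\partial D}(\textgoth{C}\nabla\vect X)\vect\nu_D\cdot\hat{\vect V}\,d\mathcal H^1$ (integration by parts, licit since $\mathrm{div}(\textgoth{C}\nabla\vect X)=\vect 0$ and $\vect X\in H^2$), we arrive at
\[
\int_0^{2\pi}2\,\vect r''\cdot\vect\eta''\,d\vartheta+\mu\int_0^{2\pi}\vect f\cdot\vect\eta\,d\vartheta=0\qquad\text{for all such }\vect\eta .
\]

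\textbf{Step 3 (Lagrange multiplier, ODE, bootstrap).} The admissible $\vect\eta$ are exactly the $C^2$ maps obeying (i)--(ii) subject to the single pointwise linear constraint (iii), $\vect r'\cdot\vect\eta'=0$; hence by the infinite-dimensional Lagrange multiplier principle there is a scalar $\sigma$ on $(0,2\pi)$ with $\int_0^{2\pi}2\vect r''\cdot\vect\eta''+\mu\int_0^{2\pi}\vect f\cdot\vect\eta=\int_0^{2\pi}\sigma\,(\vect r'\cdot\vect\eta')$ for all $\vect\eta\in C^2$ with (i)--(ii). Testing first with $\vect\eta\in C_c^\infty((0,2\pi);\R^3)$ and integrating once (recall $\vect F'=\vect f$) gives $2\vect r'''+\sigma\vect r'+\mu\vect F=\vect c$ in $(0,2\pi)$ for some $\vect c\in\R^3$; differentiating $|\vect r'|^2\equiv1$ twice gives $\vect r'\cdot\vect r'''=-|\vect r''|^2$, so scalar-multiplying the ODE by $\vect r'$ yields $\sigma=2|\vect r''|^2-\mu\,\vect F\cdot\vect r'+\vect c\cdot\vect r'$, and substituting back and collecting the terms parallel to $\vect r'$,
\[
2\vect r'''+2|\vect r''|^2\vect r'+\mu\vect F_\perp=\vect c_\perp .
\]
The bootstrap: $\vect c_\perp,\vect F_\perp$ are continuous and $|\vect r''|^2\in L^1$, so $\vect r'''\in L^1$, hence $\vect r''\in C^0$, hence $|\vect r''|^2\in C^0$, hence $\vect r'''\in C^0$, i.e.\ $\vect r\in C^3([0,2\pi];\R^3)$. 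Now testing with periodic $\vect\eta$ and integrating by parts (licit since $\vect r\in C^3$), using $\vect F(0)=\vect F(2\pi)=\vect 0$, forces $\vect r''(0)=\vect r''(2\pi)$; re-examining the identity together with these boundary conditions and the explicit $\sigma$ shows $\vect c=\vect 0$, which is \eqref{eq:system_tot2}.

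\textbf{Expected main obstacle.} The decisive step is Step~1: with $\textgoth{C}$ only $H^1$ (not Hölder) and the a priori regularity $\vect X\in W^{1,2}(D)$, the divergence-free $L^2$ field $\textgoth{C}\nabla\vect X$ has normal trace only in $H^{-1/2}(\partial D)$, which is not enough for $\vect F$ in \eqref{eq:F_def} to be a genuine function; upgrading to $\vect X\in H^2(D)$ via elliptic boundary regularity for the coupled system is therefore the technical heart of the argument. The secondary, more routine, difficulties are making rigorous the arc-length Lagrange multiplier given that $\vect r'$ is only $C^{0,1/2}$ a priori, together with the $C^3$ bootstrap. An alternative is to run Steps~2--3 at the level of the minimizers $(\vect r_h,\vect X_h)$ of the penalized functionals $\mathcal E_p^{\lambda_h}$, whose membrane Euler--Lagrange equation reads $\mu(\textgoth{C}\nabla\vect X_h\circ c)\vect\nu_D+2\lambda_h(\hat{\vect X}_h\circ c-\vect r_h)=\vect 0$ on $\partial D$ (so that the penalization force equals $-\mu\vect f_h$), and then to pass to the limit $\lambda_h\to+\infty$, which requires uniform $H^2$-bounds on $\vect X_h$.
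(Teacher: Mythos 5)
The decisive gap is at the end of your Step~3: you never prove $\vect c=\vect 0$, and no argument of the kind you sketch can. Your weak identity is tested only against variations $\vect\eta$ with $\vect r'\cdot\vect\eta'=0$ and periodic boundary behaviour, and for every such $\vect\eta$ one has
\[
\int_0^{2\pi}\vect c_\perp\cdot\vect\eta'\,d\vartheta=\vect c\cdot\int_0^{2\pi}\vect\eta'\,d\vartheta-\int_0^{2\pi}(\vect c\cdot\vect r')(\vect r'\cdot\vect\eta')\,d\vartheta=\vect c\cdot\bigl(\vect\eta(2\pi)-\vect\eta(0)\bigr)=0,
\]
so the term $\vect c_\perp$ is invisible to every admissible test. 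In particular, testing with periodic $\vect\eta$ and using $\vect F(0)=\vect F(2\pi)=\vect 0$ yields only the corner condition $\vect r''(0)=\vect r''(2\pi)$ and carries no information whatsoever about $\vect c$; ``re-examining the identity'' cannot produce $\vect c=\vect 0$. Eliminating this constant is precisely the content of the paper's Step~2: the Bernatzki--Ye device of auxiliary fields $\vect v_i$ with $\int_0^{2\pi}\vect v_i\,d\theta=\vect Z_i$ an orthonormal basis, followed by the concentration argument $a_i^h\vect v_i^h\to w_i\delta_{\theta_i}$ at Lebesgue points with $w_i\cdot(2\vect r'''(\theta_i)+\mu\vect F(\theta_i))_\perp=0$, which exploits the freedom of changing the $\vect v_i$ with $h$ — leverage that is not available once you have frozen a single abstract multiplier $\sigma$. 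Without this step your conclusion is only $(2\vect r'''+\mu\vect F)_\perp=\vect c_\perp$, which is strictly weaker than \eqref{eq:system_tot2}.

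Beyond this, your route is genuinely different from the paper's, and the differences carry their own obligations. You vary $\mathcal E_p$ directly at the limit minimizer by building trace-compatible membrane variations through a harmonic extension of $(\vect r^t-\vect r)\circ c^{-1}$, whereas the paper varies the penalized functionals $\mathcal E_p^{\lambda_h}$ — where the membrane variation $\vect X_h+t\vect V_h$ is unconstrained and the penalty term has vanishing first variation by part~(d) of Lemma~\ref{lemma:variazione_curva} — and then passes to the limit $h\to+\infty$; this is the entire reason the theorem is stated for $\mathcal M_p$ rather than for all minimizers. Your shortcut requires checking that $t\mapsto\mathrm{Ext}\bigl((\vect r^t-\vect r)\circ c^{-1}\bigr)$ is differentiable at $t=0$ in $W^{1,2}(D;\R^3)$, i.e.\ that the difference quotients of $\vect r^t$ converge in $W^{\frac12,2}$ and not merely pointwise as the Lemma provides (this is true, since $\alpha(t,\theta)-\theta=O(t^2)$ gives $\|(\vect r^t)'-\vect r'-t\vect\eta'\|_{L^2}=o(t)$, but it must be proved). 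The Lagrange multiplier $\sigma$ for the pointwise constraint $\vect r'\cdot\vect\eta'=0$ is likewise invoked without identifying the space it lives in or verifying surjectivity of the constraint map on periodic fields; the paper avoids the multiplier altogether by projecting the test. Finally, the $H^2(D)$ regularity claimed in your Step~1 does not follow from standard elliptic theory when $\textgoth{C}$ is only $H^1\cap L^\infty$ (VMO coefficients give $W^{1,p}$ estimates, not second derivatives), but it is also not needed: $\vect F$ can be read as a $W^{\frac12,2}\subset L^2$ primitive of the distributional normal trace, which is all that the subsequent steps — and the paper's own proof — actually use.
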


\begin{proof}
Let $\lambda_h\to+\infty$ and let $(\vect r_h,\vect X_h) \in \argmin \mathcal E_p^{\lambda_h}$ be such that $(\vect r_h,\vect X_h) \rightharpoonup (\vect r,\vect X)$ weakly in $\mathcal W_p$.
We divide the proof into some steps. 
\\
\\
{\it Step 1}. We claim that 
\begin{equation}\label{step1}
\int_0^{2 \pi} 2\vect{r}''\cdot \vect{\eta}''-\mu \vect F \cdot \vect \eta'\,d\theta=0,
\end{equation}
for every $\vect \eta \in C_c^2([0,2\pi];\R^3)$ be such that $\vect r'\cdot \vect\eta' = 0$ on $[0, 2\pi]$. For any $h\in\N$ let $\vect \eta_h \in C_c^2([0,2\pi];\R^3)$ be such that
\begin{itemize}
	    \item[(i)] $\vect r_h'\cdot \vect\eta_h' = 0$ on $[0, 2\pi]$;
	    \item[(ii)] $\vect \eta_h\to \vect \eta$ strongly in $C^2$.
	\end{itemize}	
Let $\varepsilon>0$ and $\{\vect r_h^t : t\in (-\varepsilon,\varepsilon)\}$ be the family of curves as in Lemma \ref{lemma:variazione_curva} relatively to $\vect r_h$ and $\vect \eta_h$, namely:
\begin{itemize}
	    \item[(a)] $\vect{r}_h^0 = \vect r_h$ on $[0,2\pi]$;
	    \item[(b)] $\vect{r}_h^t$ satisfies conditions \eqref{eq:r_0} - \eqref{eq:rprimo0} - \eqref{eq:velocita_1} for any $t\in (-\varepsilon,\varepsilon)$;
	    \item[(c)] it holds
	    \[
	    \frac{\partial}{\partial t}|(\vect r_h^t)''|^2_{\big|_{t=0}}=2\vect r_h''\cdot \vect\eta_h'', \quad \text{a.e.\,on $(0,2\pi)$};
	    \]
	   \item[(d)] for any $\vect v,\vect w\in \R^3$ it holds
	    \[
	    \frac{\partial}{\partial t}|\vect v+t\vect w-\vect r_h^t|^2_{\big|_{t=0}}=2(\vect v-\vect r_h)\cdot (\vect w-\vect\eta_h), \quad \text{on $[0,2\pi]$}.
	    \]
	\end{itemize}
Let $\vect V,\vect V_h \in C^1(\overline D;\R^3)$ be such that
\[
\textrm{$\vect V_h \to \vect V$ strongly in $C^1$ and $\hat{\vect V}_h\circ c  = \vect \eta_h$ on $[0, 2\pi]$}.
\]
In particular, $\hat{\vect V}\circ c  = \vect \eta$ on $[0, 2\pi]$. Finally, for any $t\in (-\varepsilon,\varepsilon)$ let 
\[
    \label{eq:variation_X}
    \vect{X}_h^t= \vect{X}_h + t \vect V_h.
\]
To derive \eqref{eq:system_tot2}, we need to compute
\begin{equation*}
\begin{aligned}
\frac{{\rm d}}{{\rm d t}} \mathcal E_p^{\lambda_h}[\vect{r}_h^t,\vect{X}_h^t]_{\big|_{t = 0}}
&= \frac{{\rm d}}{{\rm d t}}\bigg\{\underbrace{\int_0^{2 \pi}|(\vect r_h^t)''|^2\,d\vartheta}_{\mathcal{I}_1} \\
&\quad + \frac{\mu}{2}\underbrace{\int_D \textgoth{C}\nabla \vect{X}_h^t:\nabla \vect{X}_h^t \,du dv}_{\mathcal{I}_2} + \lambda_h\underbrace{\int_0^{2 \pi}|\hat{\vect X_h^t}\circ c -\vect r_h^t|^2\, d\vartheta}_{\mathcal{I}_3}\bigg\}_{\big|_{t = 0}},
\end{aligned}
\end{equation*}
and we are going to consider each term separately. Let us start with $\mathcal{I}_1$. We immediately obtain, using (c),
\[
\frac{{\rm d}}{{\rm d t}}{\mathcal{I}_1}_{\big|_{t=0}}= 2 \int_0^{2 \pi} \vect{r}_h''\cdot \vect{\eta}_h''\, d\vartheta.
\]
Concerning the membrane contribution we get
\[
\frac{{\rm d}}{{\rm d t}}{\mathcal{I}_2}_{\big|_{t=0}} = 2\int_D \textgoth{C}\nabla \vect{X}_h: \nabla \vect{V}_h\, dudv.
\]
In order to compute the derivative of $\mathcal I_3$, we use (d) choosing as $\vect v=\hat{ \vect X}_h(c(\theta))$ and $\vect w=\vect\eta_h(\theta)$. Hence, we obtain 
\[
\begin{aligned}
\frac{{\rm d}}{{\rm d t}}{\mathcal{I}_3}_{\big|_{t=0}} &=\frac{{\rm d}}{{\rm d t}}\int_0^{2 \pi}|\hat{ \vect X}_h(c(\theta))+t\,\hat{ \vect V}_h(c(\theta))-\vect r_h(\theta)|^2\,d\theta_{\big|_{t=0}}\\
&=\frac{{\rm d}}{{\rm d t}}\int_0^{2 \pi}|\hat{ \vect X}_h(c(\theta))+t\,\vect{\eta}_h(\theta)-\vect r_h(\theta)|^2\,d\theta_{\big|_{t=0}}\\
&=2\int_0^{2 \pi}(\hat{ \vect X}_h(c(\theta))-\vect r_h(\theta))\cdot (\vect\eta_h(\theta) -\vect \eta_h(\theta))\,d\theta=0.
\end{aligned}
\]
Collecting all the terms we get 
\[
\begin{aligned}
    \frac{{\rm d}}{{\rm d t}} \mathcal{E}_p^{\lambda_h}[\vect{r}_h^t,\vect{X}_h^t]_{\big|_{t = 0}} = 2 \int_0^{2 \pi} \vect{r}_h''\cdot \vect{\eta}_h''\, d\vartheta +\mu \int_D\textgoth{C} \nabla \vect{X}_h: \nabla \vect{V}_h\, dudv.
\end{aligned}
\]
Since $(\vect r_h,\vect X_h) \in \argmin \mathcal{E}_p^{\lambda_h}$ we have 
\[
\frac{{\rm d}}{{\rm d t}} \mathcal{E}_p^{\lambda_h}[\vect{r}_h^t,\vect{X}_h^t]_{\big|_{t = 0}}=0,
\]
leading to the condition that $\forall\,h \in \N$,
\[
2 \int_0^{2 \pi} \vect{r}_h''\cdot \vect{\eta}_h''\, d\vartheta +\mu \int_D\textgoth{C} \nabla \vect{X}_h: \nabla \vect{V}_h\, dudv=0.
\]
Passing to the limit as $h\to+\infty$ we obtain
\begin{equation}\label{ELhlim}
2 \int_0^{2 \pi} \vect{r}''\cdot \vect{\eta}''\, d\vartheta +\mu \int_D\textgoth{C} \nabla \vect{X}: \nabla \vect{V}\, dudv=0.
\end{equation}
Integrating by parts we reduce \eqref{ELhlim} to 
\[
\begin{aligned}
    0 &=2 \int_0^{2 \pi} \vect{r}''\cdot \vect{\eta}''\, d\vartheta +\mu \int_D \textgoth{C}\nabla \vect{X}: \nabla \vect{V}\, dudv\\
    &=2 \int_0^{2 \pi} \vect{r}''\cdot \vect{\eta}''\, d\vartheta -\mu\int_D {\rm div}\,(\textgoth{C}\nabla \vect{X})\cdot \vect V\, dudv + \mu\int_{\partial D} \textgoth{C}\nabla \vect{X}(\vect p)\,\vect p\cdot \hat {\vect V}(\vect p)\,d\mathcal H^1(\vect p)\\
&=\int_0^{2 \pi} 2\vect{r}''\cdot \vect{\eta}''+\mu(\textgoth{C}\nabla \vect{X} \circ c)\vect{\nu}_D\cdot \vect \eta\,d\theta-\mu\int_D {\rm div}\,(\textgoth{C}\nabla \vect{X}) \cdot \vect V\, dudv\\
&\stackrel{\eqref{eq:solo_superficie}}{=}\int_0^{2 \pi} 2\vect{r}''\cdot \vect{\eta}''+\mu(\textgoth{C}\nabla \vect{X} \circ c)\vect{\nu}_D\cdot \vect \eta\,d\theta\\
&\stackrel{\eqref{eq:F_def}}{=}\int_0^{2\pi}2\vect r''\cdot \vect \eta''-\mu \vect F\cdot \vect \eta'\,d\theta
\end{aligned}
\]
which completes the proof of \eqref{step1}.
\\
\\
{\it Step 2}. We claim that $\vect r\in W^{3,2}((0,2\pi);\R^3)$ and 
\begin{equation}\label{step3}
(2\vect{r}'''+\mu \vect F)_\perp=0.
\end{equation}
In the following, we employ a similar argument to the one introduced in Section 3.2 of \cite{bernatzki2001minimal}. Since $\vect r$ is a closed curve of class $C^1$ there exist $\vect v_i \in C_c^1([0,2\pi];\R^3)$, $i=1,2,3$, with $\vect v_i \cdot \vect r'=0$, such that the vectors $\vect Z_i$ given by
\[
\vect Z_i=\int_0^{2\pi}\vect v_i\,d\theta
\]
generate an orthonormal basis in $\R^3$. Let $\vect \xi \in C_c^\infty((0,2\pi);\R^3)$ and
\[
a_i=\int_0^{2\pi}\vect Z_i \cdot\vect \xi_\perp\,d\theta, \quad i=1,2,3.
\]
Let 
\[
\vect \psi=\vect \xi_\perp-\sum_{i=1}^3a_i\vect v_i, \quad \vect \phi(\theta)=\int_0^\theta \vect \psi\,ds.
\]
By construction, $\vect \phi$ is an admissible test, since $\vect \phi \in C_c^2((0,2\pi);\R^3)$ and $\vect r'\cdot \vect \phi'=0$. As a consequence, it can be substituted in \eqref{step1}, i.e. 
\[
\int_0^{2 \pi} 2\vect{r}''\cdot \vect{\phi}''-\mu \vect F \cdot \vect \phi'\,d\theta=0.
\]
Using $\vect \xi_\perp=\vect\xi-(\vect \xi \cdot \vect r')\vect r'$ and recalling that $|\vect r'|=1$, the previous equation reads as  
\[
\int_0^{2\pi}2\vect r''\cdot \vect \xi'\,d\theta=\int_0^{2\pi}2|\vect r''|^2\vect \xi \cdot\vect r'+2\sum_{i=1}^3a_i\vect r''\cdot \vect v_i'+\mu \vect F \cdot \vect \xi_\perp-\mu \sum_{i=1}^3a_i\vect F \cdot \vect v_i\,d\theta.
\]
The right-hand side can be easily bounded in $L^2$ since all the terms have higher regularity, namely there exists a constant $c>0$ such that 
\[
\left|\int_0^{2\pi}2\vect r''\cdot \vect \xi'\,d\theta\right|\le c\|\vect \xi\|_{L^2}, \quad \forall \vect \xi \in C_c^\infty((0,2\pi);\R^3),
\]
obtaining that $\vect r''\in W^{1,2}((0,2\pi);\R^3)$, hence $\vect r \in W^{3,2}((0,2\pi);\R^3)$. Integrating by parts \eqref{step1}, we thus get
\[
    \int_0^{2 \pi} (2\vect{r}'''+\mu \vect F)\cdot \vect \phi'\,d\theta=0,
\]
which means that 
\begin{equation}\label{finalstep3}
\int_0^{2 \pi} (2\vect{r}'''+\mu \vect F)_\perp \cdot \vect \xi\,d\theta=\int_0^{2 \pi} (2\vect{r}'''+\mu \vect F)_\perp\cdot \sum_{i=1}^3a_i\vect v_i\,d\theta.
\end{equation}
We choose three Lebesgue points $\theta_1,\theta_2,\theta_3 \in [0,2\pi]$ for $(2\vect{r}'''+\mu \vect F)_\perp$ and sequences $(\vect{v}_i^h)$ such that $\vect{v}_i^h\in C_c^1([0,2\pi];\R^3)$, $i=1,2,3$ and $h\in \N$, $ \vect{v}_i^h \cdot \vect r'=0$,
\[
\vect Z_i^h=\int_0^{2\pi}\vect v_i^h\,d\theta
\]
form an orthonormal basis in $\R^3$, and 
\[
a_i^h\vect v_i^h \to w_i\delta_{\theta_i}, \quad \text{in $\mathcal D'$},
\]
where 
\[
a_i^h=\int_0^{2\pi}\vect Z_i^h \cdot\vect \xi_\perp\,d\theta, \quad i=1,2,3,\,h\in \N,
\]
and $w_i \cdot (2\vect{r}'''(\theta_i)+\mu \vect F(\theta_i))_\perp=0$. Putting $\vect v_i^h$ in \eqref{finalstep3} and passing to the limit as $h\to+\infty$ we obtain 
\[
\int_0^{2 \pi} (2\vect{r}'''+\mu \vect F)_\perp \cdot \vect \xi\,d\theta=0,
\]
which gives the thesis \eqref{step3}, by the arbitrariness of $\vect \xi$.
\\
\\
{\it Step 3.} We can now conclude the proof. First of all, notice that since $|\vect r'|=1$ we get
\[
\vect r''' \cdot \vect r'=(\vect r'' \cdot \vect r')'-|\vect r''|^2=-|\vect r''|^2.
\]
This means that $(\vect r''')_\perp=\vect r'''+|\vect r''|^2\vect r'$, and then \eqref{eq:system_tot2} follows from \eqref{step3}. We also observe that \eqref{eq:system_tot2} implies that $\vect r^{iv}\in L^2((0,2\pi);\R^3)$, that is $\vect r\in W^{4,2}((0,2\pi);\R^3)\subset C^3([0,2\pi];\R^3)$ and this concludes the proof.  
\end{proof}

\begin{remark}\label{rem_elastica}
We point out that using the parametric approach we are not able to consider non-linear energies of the form 
\[
\int_0^{2 \pi} f\left(\vect{r}''\right)\, d\vartheta + \int_D \Psi(\nabla \vect X)\,dudv.
\]
More precisely, imposing standard conditions on $f$ and $\Psi$ it is possible to prove existence of minimizers and it is also possible to get a $\Gamma$-convergence result similar to Proposition \ref{th:gamma_conv}. Nevertheless, in the computation of the Euler-Lagrange equations we are not able, in general, to pass to the limit, as $h\to+\infty$, in the expression
\[
 \int_0^{2 \pi} f'\left(\vect{r}''_h\right) \cdot \vect{\eta}''\, d\theta+ \int_D \Psi(\nabla \vect X_h) : \nabla \vect V\,dudv
\]
which comes from the variation of $(\vect{r}_h, \vect{X}_h)$, since we have just weak convergence of $\vect{r}''_h$ and $\nabla \vect{X}_h$. In order to treat more general energies, both for the elastic curve and for the membrane, we need to pass to the framed curves framework, which has already been exploited in \cite{bevilacqua2021variational} only for the elastic curves.
\end{remark}

\section{Framed curve approach}\label{sec:second-approach}
Using the {\it framed curve approach} (introduced by Gonzalez et al.\,\cite{GMSM} and developed in \cite{bevilacqua2021variational} for elastic curves), in this section we are able to consider a more general situation. Let $p>1$ and let $(\vect{t}|\vect{n}|\vect{b})\in W^{1,p}((0,2 \pi);SO(3))$, where $SO(3)$ is the set of all $3\times3$ rotation matrices (namely, $\{\vect t,\vect n,\vect b\}$ is an orthonormal positively oriented basis in $\R^3$). On such a triple, we impose the following constraints:
\begin{align}
    \label{eq:con_1}\vect{t}'\cdot \vect{b} &= 0, \quad \text{a.e.\,on $(0,2\pi)$},\\
    \label{eq:con_2}\int_0^{2 \pi} \vect{t}\, d\vartheta &= 0,\\
    \label{eq:con_3}\vect{t}(2 \pi)&=\vect{t}(0).
\end{align}
We define the set of constraints
\[
    \mathcal{C}_{f}^{\rm r} =\left\{(\vect{t}|\vect{n}|\vect{b}) \in W^{1,p}((0,2 \pi);SO(3)): \, \eqref{eq:con_1} - \eqref{eq:con_2} - \eqref{eq:con_3} \hbox{ hold true}\right\},
\]
where the subscript $f$ refers to the {\em framed curve approach} adopted here. The elastic energy of the frame is defined as follows: let $f\colon \R\times \R \to \R$ be a measurable function and consider $\mathcal E_f^{\rm r} \colon \mathcal C_f^{\rm r}\to [-\infty,+\infty]$ as
\[
    \mathcal{E}_{f}^{\rm r}\left[(\vect{t}|\vect{n}|\vect{b}) \right]= \int_0^{2 \pi} f(\kappa,\tau)\, d\vartheta,
\]
where $\kappa=\vect{t}'\cdot \vect{n}$ and $\tau=\vect{n}'\cdot \vect{b}$, for a.e.\,on $(0,2\pi)$.

\begin{remark}
Fix $(\vect t|\vect n|\vect b)\in W^{1,p}((0,2 \pi);SO(3))$ and $\vect x_0 \in \R^3$. We reconstruct the curve $\vect  r_{\vect x_0}\colon [0,2\pi]\to \R^3$ clamped at ${\vect x}_0$ and {\it generated} by the orthonormal frame $\{{\vect t},{\vect n},{\vect b}\}$ by means of  
\[
{\vect r}_{{\vect x}_0}(\theta)={\vect x}_0+\int_0^\theta{\vect t}\,ds.
\]
As observed in detail in \cite{bevilacqua2021variational}, ${\vect r}_{{\vect x}_0} \in W^{2,p}((0,2\pi);\R^3) $ is parametrized by the arclength, it is a closed curve and the tangent vector to ${\vect r}_{{\vect x}_0}$ is continuous. Moreover, the quantities $\kappa,\tau$ represent the {\it (signed) weak curvature} and the {\it weak torsion} of ${\vect r}_{{\vect x}_0}$ respectively. Notice that from \eqref{eq:con_1} we deduce that the following Serret-Frenet type vector system of ODEs
\begin{equation}\label{frenet}
\left\{
\begin{aligned}
&\vect{t}' = \kappa \vect{n},\\
&\vect{n}' = -\kappa \vect{t} + \tau \vect{b},\\
&\vect{b}' = -\tau \vect{n},
\end{aligned}
\right.
\end{equation}
holds true.
\end{remark}

The energy contribution of the membrane is defined as follows: take $\vect{X} \in W^{1,2}(D;\R^3)$ satisfying the trace constraint, i.e.
\begin{equation}
\label{eq:trace_constraint}
\hat{\vect X}\circ c = \vect{r}_{\vect{x}_0}, \quad \text{a.e.\,on $(0,2\pi)$}.
\end{equation}
We let
\[
    \mathcal{C}_{f}^{\rm m} = \left\{\vect{X} \in W^{1,2}(D;\R^3): \eqref{eq:trace_constraint}\, \hbox{ holds true}\right\},
\]
and the energy functional $\mathcal{E}_{f}^{\rm m}\colon  \mathcal{C}_f^{\rm m} \to [-\infty,+\infty]$ is given by
\[
    \mathcal{E}_{f}^{\rm m}\left[\vect{X}\right] = \frac{\mu}{2}\int_D \Psi(\nabla \vect{X})\, dudv,
\]
where $\mu >0$ is the shear modulus and $\Psi \colon \R^{3\times 2}\to \R$ is continuous. In order to define the total energy, we let 
\[
\mathcal{C}_f := \left\{\left((\vect{t}|\vect{n}|\vect{b}), \vect{X}\right) \in W^{1,p}((0,2 \pi);\R^3) \times W^{1,2}(D;\R^3): \, \eqref{eq:con_1} - \eqref{eq:con_2} - \eqref{eq:con_3} - \eqref{eq:trace_constraint} \hbox{ hold true}\right\},
\]
be the set of constraints. Hence the total energy $\mathcal E_f \colon \mathcal C_f\to [-\infty,+\infty]$ is given by 
\[
\mathcal E_f[(\vect t|\vect n|\vect b),\vect X]=\mathcal E_f^{\rm r}[(\vect t|\vect n|\vect b)]+\mathcal E_f^{\rm m}[\vect X] = \int_0^{2 \pi} f(\kappa,\tau)\, d\vartheta + \frac{\mu}{2} \int_D \Psi(\nabla \vect{X})\, dudv.
\]

\subsection{Existence of minimizers}
\label{subsec:minimizers_framed}
In this section, we prove the existence of minimizers of $\mathcal E_f$. First of all, we need to introduce a definition \cite{morrey1952quasi}.

\begin{definition}
\label{def:quasiconvex}
Let $\Omega$ be an open set and $\Phi\colon \R^{k\times n}\to \R$ be a continuous function. We say that $\Phi$ is a {\em quasi-convex} function if $\forall\, \tens{A} \in \R^{k\times n}$ and $\forall\, \vect{\xi} \in C^{1}_c(\Omega; \R^k)$ the following inequality holds
\[
    \int_\Omega \Phi\left(\tens{A} + \nabla \vect{\xi}\right) \, dx\geq\abs{\Omega}\Phi\left(\tens{A}\right),
\]
where $|\Omega|$ is the Lebesgue measure of $\Omega$.
\end{definition}
\begin{theorem}
\label{th:esistenza_frame}
Assume that $f\colon \R\times \R \to \R$ satisfies the following hypotheses:
\begin{itemize}
    \item[(i)] $f$ is continuous and convex;
    \item[(ii)] there exist $c_1,c_2 >0$ and $c_3 \in \R$ such that $f(a,b)\geq c_1 \abs{a}^p + c_2 \abs{b}^p + c_3$ for all $a,b \in \R$ and for some $p>1$.
\end{itemize}
 Moreover, we require for the elastic energy density $\Psi \colon \R^{3\times 2} \to \R$, that:
\begin{itemize}
    \item[(i)] $\Psi$ is quasi-convex;
    \item[(ii)] there exist $c_4,c_5>0$ such that $c_4|\tens A|^q\leq \Psi(\tens{A}) \leq c_5 \abs{\tens{A}}^q$ for all $\tens A \in \R^{3\times 2}$ and for some $q>1$.
\end{itemize}
Then $\mathcal{E}_f$ has a minimizer.
\end{theorem}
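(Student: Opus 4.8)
The plan is to apply the direct method of the calculus of variations to $\mathcal{E}_f$ on $\mathcal{C}_f$. First I would take a minimizing sequence $\left((\vect{t}_h|\vect{n}_h|\vect{b}_h),\vect{X}_h\right) \in \mathcal{C}_f$; since $f$ is bounded below by an affine function of $|\kappa|^p$ and $|\tau|^p$ via hypothesis (ii) on $f$, and since $\Psi \geq 0$ by the lower bound in hypothesis (ii) on $\Psi$, the energy bound $\mathcal{E}_f \leq c$ yields that $\kappa_h = \vect{t}_h'\cdot\vect{n}_h$ and $\tau_h = \vect{n}_h'\cdot\vect{b}_h$ are bounded in $L^p(0,2\pi)$. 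Because $(\vect{t}_h|\vect{n}_h|\vect{b}_h)$ takes values in $SO(3)$ it is bounded in $L^\infty$, and by the Serret-Frenet system \eqref{frenet} the derivatives $\vect{t}_h',\vect{n}_h',\vect{b}_h'$ are controlled pointwise by $(|\kappa_h|+|\tau_h|)$ times bounded quantities, hence bounded in $L^p$; so the frame is bounded in $W^{1,p}((0,2\pi);\R^{3\times 3})$. Up to a subsequence it converges weakly in $W^{1,p}$ and, by compact Sobolev embedding, strongly in $C^0$ to some $(\vect{t}|\vect{n}|\vect{b})$, which therefore still takes values in $SO(3)$ and still satisfies the linear/closure constraints \eqref{eq:con_1}--\eqref{eq:con_3} (the integral and endpoint conditions pass to the limit by uniform convergence; the algebraic constraint $\vect{t}'\cdot\vect{b}=0$ passes by weak$\times$strong convergence).

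Next I would handle the membrane. From the upper bound $\Psi(\tens{A})\leq c_5|\tens{A}|^q$ one does not directly get coercivity, so instead I use the lower bound $c_4|\tens{A}|^q \leq \Psi(\tens{A})$ together with $\mathcal{E}_f\leq c$ to conclude $\|\nabla\vect{X}_h\|_{L^q}$ is bounded. To bound $\|\vect{X}_h\|_{L^q}$ itself I would repeat the extension-and-Poincaré trick from Lemma \ref{compactness}: let $\tilde{\vect{r}}_h = \vect{r}_{\vect{x}_0,h}\circ c^{-1}$ on $\partial D$, set $\vect{Y}_h = \mathrm{Ext}\,\tilde{\vect{r}}_h$ the harmonic (or any bounded linear) extension, and $\vect{Z}_h = \vect{X}_h - \vect{Y}_h \in W^{1,q}_0(D;\R^3)$; then $\|\vect{X}_h\|_{L^q}\lesssim \|\nabla\vect{Z}_h\|_{L^q} + \|\vect{Y}_h\|_{W^{1,q}} \lesssim \|\nabla\vect{X}_h\|_{L^q} + \|\vect{r}_{\vect{x}_0,h}\|_{W^{1,q}(0,2\pi)}$, and the latter is bounded since $\vect{r}_{\vect{x}_0,h}$ is $1$-Lipschitz (arclength) and clamped. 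Hence $\vect{X}_h \rightharpoonup \vect{X}$ weakly in $W^{1,q}(D;\R^3)$ up to a subsequence.

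Then I must check the trace constraint \eqref{eq:trace_constraint} is preserved in the limit. Since the trace operator $W^{1,q}(D;\R^3)\to L^q(\partial D;\R^3)$ is linear and continuous hence weakly continuous, $\hat{\vect{X}}_h \circ c \rightharpoonup \hat{\vect{X}}\circ c$ in $L^q(0,2\pi)$; on the other hand $\vect{r}_{\vect{x}_0,h}(\theta) = \vect{x}_0 + \int_0^\theta \vect{t}_h\,ds \to \vect{x}_0 + \int_0^\theta \vect{t}\,ds = \vect{r}_{\vect{x}_0}(\theta)$ uniformly by the $C^0$ (in fact $L^p$, hence after integration uniform) convergence of $\vect{t}_h$. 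Testing $\hat{\vect{X}}_h\circ c = \vect{r}_{\vect{x}_0,h}$ against continuous functions and passing to the limit gives $\hat{\vect{X}}\circ c = \vect{r}_{\vect{x}_0}$ a.e., so $\left((\vect{t}|\vect{n}|\vect{b}),\vect{X}\right)\in\mathcal{C}_f$.

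Finally I would prove weak lower semicontinuity of $\mathcal{E}_f$ along this sequence, term by term. For the curve term $\int_0^{2\pi} f(\kappa_h,\tau_h)\,d\vartheta$: here is the delicate point, since $\kappa_h,\tau_h$ only converge weakly in $L^p$, so I cannot pass to the limit pointwise; instead I use that $f$ is continuous and \emph{convex}, together with the strong $C^0$-convergence of the frame, to write $\kappa_h = \vect{t}_h'\cdot\vect{n}_h$ and $\tau_h = \vect{n}_h'\cdot\vect{b}_h$ and argue as in \cite{bevilacqua2021variational}: the pair $(\vect{t}_h',\vect{n}_h')\rightharpoonup(\vect{t}',\vect{n}')$ weakly in $L^p$, and against the strongly convergent $\vect{n}_h,\vect{b}_h$ one gets $(\kappa_h,\tau_h)\rightharpoonup(\kappa,\tau)$ weakly in $L^p$; convexity of $f$ then gives weak lower semicontinuity of $(\kappa,\tau)\mapsto\int f(\kappa,\tau)$ by the standard Ioffe/De Giorgi lower semicontinuity theorem (using the lower bound in (ii) to control the negative part). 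For the membrane term $\frac{\mu}{2}\int_D \Psi(\nabla\vect{X}_h)\,dudv$: quasiconvexity of $\Psi$ plus the growth bounds $c_4|\tens{A}|^q\leq\Psi(\tens{A})\leq c_5|\tens{A}|^q$ are exactly the hypotheses of Acerbi--Fusco's weak lower semicontinuity theorem \cite{acerbi1984semicontinuity}, so $\int_D\Psi(\nabla\vect{X})\leq\liminf_h\int_D\Psi(\nabla\vect{X}_h)$. Adding the two gives $\mathcal{E}_f\left[(\vect{t}|\vect{n}|\vect{b}),\vect{X}\right]\leq\liminf_h\mathcal{E}_f\left[(\vect{t}_h|\vect{n}_h|\vect{b}_h),\vect{X}_h\right] = \inf_{\mathcal{C}_f}\mathcal{E}_f$, so the limit is a minimizer. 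The main obstacle is the lower semicontinuity of the curve term, where the nonlinearity $f$ acts on the only-weakly-convergent curvature and torsion; the resolution is that the quadratic structure $\kappa=\vect{t}'\cdot\vect{n}$, $\tau=\vect{n}'\cdot\vect{b}$ couples a weakly convergent factor with a strongly ($C^0$) convergent one, reducing everything to classical convexity-based semicontinuity, exactly the mechanism already used for pure elastic curves in \cite{bevilacqua2021variational}.
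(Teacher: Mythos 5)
Your proof is correct and follows essentially the same route as the paper, whose own proof simply cites the three ingredients you spell out in detail: the framed-curve compactness and convexity-based semicontinuity argument of \cite{bevilacqua2021variational} (the weak-$L^p$ $\times$ strong-$C^0$ mechanism for $\kappa_h=\vect{t}_h'\cdot\vect{n}_h$ and $\tau_h=\vect{n}_h'\cdot\vect{b}_h$), the extension-and-Poincar\'e bound plus trace-passage from Lemma \ref{compactness}, and the Acerbi--Fusco lower semicontinuity theorem \cite{acerbi1984semicontinuity} for the quasiconvex membrane term. No gaps.
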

\begin{proof}
The proof can be easily obtained combining \cite[Thm.\,3.1]{bevilacqua2021variational}, Lemma \ref{compactness} and the lower semicontinuity result \cite[Thm.\,II.4]{acerbi1984semicontinuity}.
\end{proof}

\subsection{First-order necessary conditions}
In this section, using the argument as in \cite{bevilacqua2021variational}, we want to derive the first-order necessary conditions for minimizers of $\mathcal{E}_f$ applying the infinite-dimensional version of the Lagrange multipliers’ method (see, for instance, \cite{zeidler2012applied}, Sec. 4.14). Precisely, we are going to use the following abstract result.

\begin{theorem}\label{th:inf_dim_lag_multiplier}
Let $Y,Z$ be two real Banach spaces, $\mathcal F\in C^1(Y)$ and $\mathcal G \in C^1(Y;Z)$. Let $y_0 \in Y$ be such that
	\[
	\mathcal F(y_0)=\min\{\mathcal F(y) \colon y\in Y\} \quad \hbox{ and } \quad \mathcal{G}(y_0) = 0.
	\]
	Assume that $\mathcal G'(y_0) \colon Y\to Z$ is surjective.
	Then there exists a Lagrange multiplier $\lambda \colon Z \to \R$, a linear and continuous application, such that
	\[
	\mathcal F'(y_0)=\lambda\mathcal G'(y_0).
	\]
\end{theorem}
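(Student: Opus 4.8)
The plan is to follow the classical Lyusternik route. First I would introduce the closed subspace $K := \ker \mathcal{G}'(y_0) \subseteq Y$ (closed because $\mathcal{G}'(y_0)$ is bounded). The proof then splits into two parts: an \emph{analytic} part, showing that the differential $\mathcal{F}'(y_0)$ annihilates $K$, and a \emph{soft} part, in which the multiplier $\lambda$ is obtained by factoring $\mathcal{F}'(y_0)$ through $\mathcal{G}'(y_0)$ on the quotient $Y/K$.

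For the analytic part, fix $h \in K$. The idea is to produce a $C^1$ curve $t \mapsto \gamma(t) = y_0 + t h + \rho(t)$, defined for $|t|$ small, which stays inside the constraint set $\{\mathcal{G} = 0\}$, with $\rho(0) = 0$ and $\rho'(0) = 0$ so that $\gamma'(0) = h$. The existence of such a curve is Lyusternik's theorem on the tangent space to a level set; its proof uses the surjectivity of $\mathcal{G}'(y_0)$ in an essential way: by the open mapping theorem $\mathcal{G}'(y_0)$ carries the unit ball of $Y$ onto a set containing a ball $c\,B_Z$, and a Newton/Graves-type iteration then solves $\mathcal{G}(y_0 + t h + \rho) = 0$ for the correction $\rho = \rho(t) = o(t)$. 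Granting the curve, the scalar function $g(t) := \mathcal{F}(\gamma(t))$ is $C^1$ near $0$ and, since $y_0$ minimizes $\mathcal{F}$ on the constraint set, has a local minimum at $t = 0$; hence $0 = g'(0) = \mathcal{F}'(y_0)[\gamma'(0)] = \mathcal{F}'(y_0)[h]$. As $h \in K$ was arbitrary, $\mathcal{F}'(y_0)$ vanishes on $K$.

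For the factorization, note that since $K$ is closed, $Y/K$ is a Banach space and the canonical projection $\pi \colon Y \to Y/K$ is bounded. The bounded surjection $\mathcal{G}'(y_0)$ descends to a continuous bijection $\overline{\mathcal{G}} \colon Y/K \to Z$, which is a topological isomorphism by the open mapping theorem, while $\mathcal{F}'(y_0) \in Y^*$, vanishing on $K$, descends to a continuous functional $\overline{\mathcal{F}'(y_0)} \in (Y/K)^*$. Setting $\lambda := \overline{\mathcal{F}'(y_0)} \circ \overline{\mathcal{G}}^{-1} \colon Z \to \R$ yields a linear continuous functional satisfying $\lambda \circ \mathcal{G}'(y_0) = \overline{\mathcal{F}'(y_0)} \circ \overline{\mathcal{G}}^{-1} \circ \mathcal{G}'(y_0) = \overline{\mathcal{F}'(y_0)} \circ \pi = \mathcal{F}'(y_0)$, which is the asserted identity.

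The main obstacle is exactly the construction of the constraint-respecting curve, i.e. Lyusternik's theorem: this is where surjectivity of $\mathcal{G}'(y_0)$ is genuinely used and where the only non-trivial analysis — a quantitative inverse/implicit-function argument in Banach spaces — enters. Once that is available, the variational step and the quotient-space factorization are routine; alternatively one could simply invoke Lyusternik's theorem from the cited literature and reduce the proof to the final factorization paragraph.
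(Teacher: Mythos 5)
The paper does not actually prove this theorem: it is imported as an abstract result from the literature (Zeidler, Sec.~4.14), so there is no in-paper argument to compare yours against. Your outline is the standard Lyusternik-route proof and is structurally correct: the tangent-space step (every $h\in\ker\mathcal G'(y_0)$ is tangent to the level set $\{\mathcal G=0\}$, via a Graves/Newton correction $\rho(t)=o(t)$, which is where surjectivity and the open mapping theorem enter), the variational step ($\mathcal F'(y_0)$ annihilates the kernel), and the factorization through the quotient $Y/\ker\mathcal G'(y_0)$, on which $\mathcal G'(y_0)$ induces a topological isomorphism onto $Z$. Three remarks. First, the statement as printed says $\mathcal F(y_0)=\min\{\mathcal F(y):y\in Y\}$, i.e.\ an unconstrained global minimum; taken literally this makes the theorem trivial ($\mathcal F'(y_0)=0$, take $\lambda=0$). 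The hypothesis actually intended, and the one used later in the paper, is that $y_0$ minimizes $\mathcal F$ subject to $\mathcal G(y)=0$; that is what your argument addresses, so you have silently corrected the statement, which is fine but worth flagging. Second, you do not need the curve $\gamma$ to be $C^1$ with $\rho'(0)=0$: the Graves construction only yields $\rho(t)=o(t)$, and that already suffices, since $\mathcal F(\gamma(t))-\mathcal F(y_0)=t\,\mathcal F'(y_0)[h]+o(t)\ge 0$ for $t$ of both signs forces $\mathcal F'(y_0)[h]=0$; claiming more regularity than the construction provides is a small overstatement, not an error. Third, the only genuinely nontrivial analysis, the quantitative correction lemma (Lyusternik--Graves), is left as a sketch or citation in your proposal; since the paper itself cites the entire theorem without proof, that level of detail is acceptable here, but a self-contained write-up would have to carry out the iteration.
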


Applying Theorem \ref{th:inf_dim_lag_multiplier} we can prove the following result.

\begin{theorem}
\label{th:lagrange_multiplier_nostro}
Assume that $f$ is of class of $C^1$ and that there is $\alpha>0$ such that 
\begin{equation*}
    \begin{aligned}
    &\abs{f_a(a,b)} \leq \alpha\left(1 + \abs{a}^{p-1} +\abs{b}^{p-1}\right),&&&\abs{f_b(a,b)} \leq \alpha\left(1 + \abs{a}^{p-1} +\abs{b}^{p-1}\right),
    \end{aligned}
\end{equation*}
for all $a,b \in \R$. In addition, we assume that there is $\beta>0$ such that 
\[
\abs{{\rm D}\Psi(\tens{A})} \leq \beta \left(1 + \abs{\tens{A}}^2\right)
\]
for all $\tens{A} \in \R^{3\times 2}$. Let $\left((\vect{t}|\vect{n}|\vect{b}), \vect{X}\right)$ be a minimizer for $\mathcal{E}_f$. Then, there exist $\omega \in L^p(0, 2 \pi)$, $\vect{\chi} \in L^2((0,2 \pi);\R^3)$ and $\vect{\lambda} \in \R^3$ such that the following first-order necessary conditions hold a.e. $(u,v) \in D$ and $\vartheta \in (0, 2 \pi)$:
\begin{empheq}[left=\empheqlbrace]{align}
\label{eq:armonica_th}
&\diver\left({\rm D}\Psi(\nabla \vect{X})\right) = 0, &&\hbox{on }D,\\
 \label{eq:uno_th}
 &-f_a' -\omega \tau -\vect n \cdot \int_0^\vartheta \vect{\chi} \, ds = \vect{\lambda}\cdot \vect{n}, &&\hbox{on $(0,2\pi)$},\\
 \label{eq:due_th}
 & -f_a \tau +\omega' + f_b \kappa -\vect b \cdot \int_0^\vartheta \vect{\chi}\, ds= \vect{\lambda}\cdot \vect{b}, &&\hbox{on }(0,2\pi),\\
\label{eq:tre_th}
	&\frac{\mu}{2}\left({\rm D}\Psi(\nabla \vect{X}) \circ c\right)\vect{\nu}_D = \vect{\chi}, &&\hbox{on }(0,2\pi),\\
 \label{eq:quattro_th}
 & \omega \kappa -f'_b = 0,&&\hbox{on }(0,2\pi),
\end{empheq}
where here, for simplicity, $f_a:=f_a(\kappa,\tau)$ and $f_b:=f_b(\kappa,\tau)$.
\end{theorem}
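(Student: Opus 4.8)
The strategy is to set up the abstract Lagrange multiplier theorem (Theorem~\ref{th:inf_dim_lag_multiplier}) with the right choice of Banach spaces and constraint map, then translate the abstract identity $\mathcal F'(y_0)=\lambda\mathcal G'(y_0)$ into the system \eqref{eq:armonica_th}--\eqref{eq:quattro_th}. First I would work on a linearized chart around the minimizer. The natural ambient space is not $SO(3)$-valued maps directly (which is not a linear space), so following \cite{bevilacqua2021variational} I would parametrize the frame near the minimizer $(\vect t|\vect n|\vect b)$ by $W^{1,p}((0,2\pi);\R^3)$-valued infinitesimal rotations: write a perturbed frame as $\tens R(\vartheta)(\vect t|\vect n|\vect b)(\vartheta)$ with $\tens R = \exp(\tens W)$, $\tens W$ skew, encoded by a vector field $\vect w \in W^{1,p}((0,2\pi);\R^3)$. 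Then $Y = W^{1,p}((0,2\pi);\R^3)\times W^{1,2}(D;\R^3)$ with variables $(\vect w, \vect V)$, $\mathcal F$ is the energy $\mathcal E_f$ expressed in this chart, and $\mathcal G\colon Y \to Z$ collects the constraints: the closure condition $\int_0^{2\pi}\vect t\,d\vartheta = 0$ (landing in $\R^3$), the periodicity $\vect t(2\pi)=\vect t(0)$ (landing in $\R^3$), and the pointwise trace constraint $\hat{\vect X}\circ c - \vect r_{\vect x_0} = 0$. The constraint \eqref{eq:con_1} is automatically preserved by the $SO(3)$-chart if one is careful, or must be added; the regularity hypotheses on $f$ and $\Psi$ are exactly what guarantees $\mathcal F\in C^1(Y)$ and $\mathcal G\in C^1(Y;Z)$ via Nemytskii-operator differentiability bounds.

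\medskip

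\noindent The next step is the surjectivity of $\mathcal G'(y_0)$, which is the heart of the argument. I expect the trace part of the constraint to be the easy direction: given any prescribed boundary datum one extends harmonically into $D$ to adjust $\vect V$, so the $\vect X$-component alone surjects onto the trace-constraint target space. The genuinely delicate part is hitting the finite-dimensional closure/periodicity conditions $\int_0^{2\pi}\vect t\,d\vartheta$ and $\vect t(2\pi)-\vect t(0)$ simultaneously with variations $\vect w$ that are compatible — this is precisely where the construction in \cite{bevilacqua2021variational} of suitable test rotations is invoked, using that the curve is genuinely three-dimensional (the vectors $\vect Z_i$ spanning $\R^3$, as in Step~2 of the proof of Theorem~\ref{thm:system_tot}). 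I would cite that surjectivity lemma rather than reprove it. Once surjectivity holds, Theorem~\ref{th:inf_dim_lag_multiplier} produces a multiplier $\lambda$ on $Z$; since $Z$ is a product of an $L^p$-type space (for the trace constraint, paired against $\vect\chi\in L^2$, using that the trace space embeds suitably), and two copies of $\R^3$ (paired against $\vect\lambda\in\R^3$ for the integral constraint and against a second vector for periodicity), $\lambda$ decomposes into the data $\omega\in L^p$, $\vect\chi\in L^2((0,2\pi);\R^3)$, $\vect\lambda\in\R^3$ claimed in the statement — here $\omega$ is the multiplier dual to the Frenet/orthogonality constraint \eqref{eq:con_1}.

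\medskip

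\noindent Finally I would expand $\mathcal F'(y_0)[\vect w,\vect V] = \langle\lambda,\mathcal G'(y_0)[\vect w,\vect V]\rangle$ and separate variables. Setting $\vect w = 0$ and letting $\vect V$ range over $W^{1,2}_0(D;\R^3)$ kills every boundary term and yields $\int_D {\rm D}\Psi(\nabla\vect X):\nabla\vect V\,dudv = 0$, hence \eqref{eq:armonica_th} after integrating by parts. Letting $\vect V$ have nonzero trace then produces the boundary contribution $\tfrac\mu2({\rm D}\Psi(\nabla\vect X)\circ c)\vect\nu_D$, which must be balanced against the trace multiplier, giving \eqref{eq:tre_th} and explaining why the remaining curve equations carry the term $\int_0^\vartheta\vect\chi\,ds$ (the antiderivative appears because a variation of the frame $\vect w$ moves the curve $\vect r_{\vect x_0}(\vartheta) = \vect x_0 + \int_0^\vartheta\vect t\,ds$ through an \emph{integral} of the perturbed tangent). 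Setting $\vect V=0$ and varying $\vect w$ componentwise along $\vect t$, $\vect n$, $\vect b$ — using the Frenet system \eqref{frenet} to compute the induced variations of $\kappa=\vect t'\cdot\vect n$ and $\tau=\vect n'\cdot\vect b$ — gives three scalar identities; the $\vect n$-component yields \eqref{eq:uno_th}, the $\vect b$-component yields \eqref{eq:due_th} after an integration by parts producing $\omega'$, and the $\vect t$-component (which only feels the constraint \eqref{eq:con_1} and the torsion through $\kappa$) yields the algebraic relation \eqref{eq:quattro_th}. The main obstacle, to reiterate, is the surjectivity of $\mathcal G'(y_0)$ onto the finite-dimensional closure constraints together with the pointwise trace constraint; everything else is bookkeeping with the Frenet frame and integration by parts, modulo the growth bounds on $f_a,f_b,{\rm D}\Psi$ that make all the pairings well defined (this is where hypotheses (a) and (b) enter, ensuring $f_a',f_b'\in L^{p'}$ and ${\rm D}\Psi(\nabla\vect X)\in L^{3/2}\hookrightarrow$ the relevant dual, so that the trace $({\rm D}\Psi(\nabla\vect X)\circ c)\vect\nu_D$ makes sense as the $L^2$ object $\vect\chi$).
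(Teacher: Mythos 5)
Your plan is correct in substance and lands on the same abstract tool (Theorem \ref{th:inf_dim_lag_multiplier}) and the same final bookkeeping with the Frenet system, but it takes a genuinely different route on two points. First, you handle the $SO(3)$ condition by a chart: perturbed frames $\exp(\tens W)(\vect t|\vect n|\vect b)$ encoded by a skew field $\vect w$, so orthonormality never enters $\mathcal G$. The paper instead works in the flat space $W^{1,p}((0,2\pi);\R^3)^3$ with linear variations $(\vect t+\sigma\vect\eta_1|\vect n+\sigma\vect\eta_2|\vect b+\sigma\vect\eta_3)$ and puts the orthonormality relations $\vect t\cdot\vect t-1$, $\vect n\cdot\vect n-1$, $\vect t\cdot\vect n$, $\vect b-\vect t\times\vect n$ into the constraint map; this generates four extra multipliers $\lambda_1,\lambda_2,\lambda_3,\vect\lambda_4$ that are then computed explicitly and eliminated by testing with $\varphi\vect t$, $\varphi\vect n$, $\varphi\vect b$. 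Your chart buys fewer multipliers and cleaner variation formulas (e.g.\ $\delta\kappa=\vect w'\cdot\vect b$), at the price of verifying $C^1$-differentiability of the energy composed with the exponential map; the paper's flat approach is more elementary and gets $\mathcal F,\mathcal G\in C^1$ directly from the growth bounds (a), (b), at the price of the elimination step. Second, you place the closure and periodicity conditions \eqref{eq:con_2}--\eqref{eq:con_3} inside $\mathcal G$ (landing in $\R^3\times\R^3$), which is exactly why you flag surjectivity onto those finite-dimensional targets as the delicate step; the paper sidesteps this by building \eqref{eq:con_2}--\eqref{eq:con_3} into the \emph{linear space} $Y$ itself, so that surjectivity of $\mathcal G'$ is checked by an explicit construction only for the orthonormality, \eqref{eq:con_1} and trace components, and the constant $\vect\lambda\in\R^3$ emerges at the very end as a du Bois--Reymond constant because the admissible $\vect\eta_1$ have zero mean. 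One caution on your hedge: the constraint \eqref{eq:con_1}, i.e.\ $\vect t'\cdot\vect b=0$, is \emph{not} automatically preserved by a pointwise rotation of the frame and must be retained in $\mathcal G$; you do ultimately assign $\omega$ as its multiplier, which is consistent with the paper (where $\omega=\lambda_5$ is dual to $\vect t'\cdot\vect b$), but the "if one is careful" option is not available.
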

\begin{proof}
In order to apply Theorem \ref{th:inf_dim_lag_multiplier} we introduce the Banach spaces
\[
\begin{aligned}
&Y := \left\{\left((\vect{t}|\vect{n}|\vect{b}), \vect{X}\right) \in W^{1,p}((0,2 \pi); \R^3) \times W^{1,2}(D; \R^3):\, \int_0^{2 \pi} \vect{t}\, d\theta =0, \,\vect{t}(0) = \vect{t}(2 \pi) \right\},\\
&Z :=L^p(0,2 \pi))\times L^p(0,2 \pi)\times L^p(0,2 \pi) \times L^p((0,2 \pi);\R^3) \times L^p(0,2 \pi) \times L^2((0,2 \pi);\R^3)
\end{aligned}
\]
and the functionals $\mathcal F \colon Y \to \R$ and $\mathcal G \colon Y \to Z$ given by 
\[
\begin{aligned}
\mathcal{F}\left[(\vect{t}|\vect{n}|\vect{b}), \vect{X}\right] &=\int_0^{2 \pi} f(\vect{t}'\cdot \vect{n}, \vect{n}'\cdot \vect{b})\, d\vartheta + \frac{\mu}{2} \int_D \Psi(\nabla \vect{X})\,dudv,\\
\mathcal{G}\left[(\vect{t}|\vect{n}|\vect{b}), \vect{X}\right] &= \left(\vect{t}\cdot \vect{t}-1, \vect{n}\cdot \vect{n}-1, \vect{t}\cdot \vect{n}, \vect{b}- \vect{t}\times \vect{n}, \vect{t}'\cdot \vect{b}, \hat{\vect X}- \vect{r}_{\vect{x}_0}\right).
\end{aligned}
\]
To derive the first-order necessary conditions for minimizers, we fix $\left(\left(\vect{\eta}_1| \vect{\eta}_2|\vect{\eta}_3\right), \vect{V}\right) \in Y$. Hence we have
\[
\begin{aligned}
&\frac{{\rm d}}{{\rm d}\sigma} \mathcal{F}\left[(\vect{t} + \sigma \vect{\eta}_1|\vect{n} + \sigma \vect{\eta}_2|\vect{b} + \sigma \vect{\eta}_3),\vect{X}+ \sigma \vect{V}\right]_{\big|_{\sigma = 0}}\\
&\stackrel{\cite{bevilacqua2021variational}}{=} \int_0^{2 \pi} f_a\,\left(\vect{\eta}_1'\cdot \vect{n} + \vect{t}'\cdot \vect{\eta}_2\right)\, d\vartheta+ \int_0^{2 \pi} f_b\,\left(\vect{\eta}_2'\cdot \vect{b} + \vect{n}'\cdot \vect{\eta}_3\right)\, d\vartheta +\frac{\mu}{2} \int_D {\rm D}\Psi(\nabla\vect{X}):\nabla \vect{V}\, dudv.
\end{aligned}
\]
Let us define
\[
    L\left[(\vect{\eta}_1, \vect{\eta}_2,\vect{\eta}_3),\vect{V}\right]:= \int_0^{2\pi} f_a\,\left(\vect{\eta}_1'\cdot \vect{n} + \vect{t}'\cdot \vect{\eta}_2\right)\, d\vartheta+ \int_0^{2 \pi} f_b\,\left(\vect{\eta}_2'\cdot \vect{b} + \vect{n}'\cdot \vect{\eta}_3\right)\, d\vartheta + \frac{\mu}{2} \int_D {\rm D}\Psi(\nabla\vect{X}):\nabla \vect{V}\, dudv.
\]
It is immediate to prove that $L$ is linear and continuous  obtaining that $\mathcal{F} \in C^1(Y)$ (see \cite{badiale2010semilinear} for details), and 
\[
\begin{aligned}
\mathcal{F}'\left[(\vect{t}, \vect{n}, \vect{b}),\vect{X}\right]\left[(\vect{\eta}_1, \vect{\eta}_2,\vect{\eta}_3),\vect{V}\right]&= \int_0^{2 \pi} f_a\,\left(\vect{\eta}_1'\cdot \vect{n} + \vect{t}'\cdot \vect{\eta}_2\right)\, d\vartheta+ \int_0^{2 \pi} f_b\,\left(\vect{\eta}_2'\cdot \vect{b} + \vect{n}'\cdot \vect{\eta}_3\right)\, d\vartheta\\
&\quad +\frac{\mu}{2} \int_D {\rm D}\Psi(\nabla\vect{X}):\nabla \vect{V}\, dudv\\
&=\int_0^{2 \pi} f_a\,\left(\vect{\eta}_1'\cdot \vect{n} + \vect{t}'\cdot \vect{\eta}_2\right)\, d\vartheta+ \int_0^{2 \pi} f_b\,\left(\vect{\eta}_2'\cdot \vect{b} + \vect{n}'\cdot \vect{\eta}_3\right)\, d\vartheta\\
&\quad + \frac{\mu}{2}\int_0^{2 \pi} \left({\rm D}\Psi(\nabla\vect{X})\circ c\right)\vect{\nu}_D\cdot (\hat{\vect V} \circ c)\, d\vartheta -\frac{\mu}{2} \int_D \diver\left({\rm D}\Psi(\nabla\vect{X})\right) \cdot \vect{V}\, dudv.
\end{aligned}
\]
Considering the constraints, first we immediately deduce that $\mathcal{G}\in C^{1}(Y;Z)$ since both the trace and the integral operators are linear and continuous. Hence we can define
$$
\begin{aligned}
\mathcal{G}'\left[(\vect{t}, \vect{n}, \vect{b}),\vect{X}\right] = \left(2\vect{t}\cdot \vect{\eta}_1, 2\vect{n}\cdot \vect{\eta}_2,\right.&\left. \vect{t}\cdot \vect{\eta}_2 +\vect{n}\cdot \vect{\eta}_1, \vect{\eta}_3 + \vect{n}\times \vect{\eta}_1 - \vect{t}\times \vect{\eta}_2,\right.\\
&\left.\vect{b}\cdot \vect{\eta}_1' + \vect{t}' \cdot \vect{\eta}_3, \hat{\vect V}\circ c- \int_0^\vartheta \vect{\eta}_1\, ds \right).
\end{aligned}
$$
It is straightforward to prove that $\mathcal{G}'$ is surjective. Take $a_1,a_2,a_3,a_5 \in L^p(0,2 \pi)$, $\vect{a}_4 \in L^p((0,2 \pi);\R^3)$ and $\vect{a}_6 \in L^2((0, 2 \pi); \R^3)$.
We can prove that $\zeta_j=\zeta_j(\vartheta)$, for $j=1,2$, can be determined in a way that
\[
\begin{aligned}
&\vect{\eta}_1 = \frac{a_1}{2}\vect{t}+ \zeta_1\vect{b}\\
&\vect{\eta}_2 = \zeta_2 \vect{t} + \frac{a_2}{2}\vect{b}\\
&\vect{\eta}_3 = \vect{a}_4 + \frac{a_1}{2} \vect{b} -\lambda_1\vect{t}- \frac{a_2}{2}\vect{n}\\
&\hat{\vect V} \circ c = \vect{a}_6 +\int_0^{\vartheta} \left(\frac{a_1}{2}\vect{t} + \zeta_1 \vect{b}\right)\, ds,
\end{aligned}
\]
satisfy $\mathcal G'\left[\left(\vect{t}|\vect{n}|\vect{b}\right), \vect{X}\right]\left(\left(\vect{\eta}_1|\vect{\eta}_2|\vect{\eta}_3\right),\vect{V}\right)=(a_1,a_2,a_3,\vect{a}_4,a_5,\vect{a}_6)$. Precisely, to get the value of $\vect{V}$ on the entire disc, we can just consider, for instance, its harmonic extension.
Hence, applying Theorem \ref{th:inf_dim_lag_multiplier}, we can say that there exist $\lambda_1, \lambda_2, \lambda_3 \in L^{p'}(0,2 \pi)$, $\vect{\lambda}_4 \in L^{p'}((0,2 \pi);\R^3)$, $\lambda_5 \in L^{p'}(0,2 \pi)$ and $\vect{\lambda}_6 \in L^{2}((0,2 \pi);\R^3)$ such that
\[
    \begin{aligned}
&\int_0^{2 \pi} f_a\vect{n}\cdot \vect{\eta}_1'\, d\vartheta + \int_0^{2 \pi} f_a \vect{t}'\cdot \vect{\eta}_2\, d\vartheta+ \int_0^{2 \pi} f_b  \vect{b}\cdot \vect{\eta}_2'\, d\vartheta +\int_0^{2 \pi} f_b\vect{n}'\cdot \vect{\eta}_3\, d\vartheta\\
&\qquad \qquad \qquad\quad+ \frac{\mu}{2}\int_0^{2 \pi} \left({\rm D}\Psi(\nabla\vect{X} \circ c)\right)\vect{\nu}_D\cdot \left(\hat{\vect V}\circ c\right)\, d\vartheta -\frac{\mu}{2} \int_D \diver\left({\rm D}\Psi(\nabla\vect{X})\right) \cdot \vect{V}\, dudv\\
=&\int_0^{2 \pi} \left(2\lambda_1 \vect{t} + \lambda_3 \vect{n} +\vect{\lambda}_4 \times \vect{n}\right)\cdot \vect{\eta}_1\, d\vartheta + \int_0^{2 \pi} \left(2 \lambda_2 \vect{n} + \lambda_3 \vect{t} -\vect{\lambda}_4 \times \vect{t}\right)\cdot \vect{\eta}_2\, d\vartheta+ \int_0^{2 \pi} \left(\vect{\lambda}_4 + \lambda_5 \vect{t}'\right)\cdot \vect{\eta}_3\, d\vartheta\\
&\qquad \qquad \qquad\qquad \qquad\qquad\qquad\;\,+ \int_0^{2 \pi} \lambda_5 \vect{b}\cdot \vect{\eta}_1'\, d\vartheta + \int_0^{2 \pi} \vect{\lambda}_6 \cdot \left[ \hat{\vect V}\circ c - \int_0^\vartheta \vect{\eta}_1\, ds \right]\, d\vartheta.
\end{aligned}
\]
Choosing in a suitable way the arbitrary tests $\varphi \in C_c^\infty(0,2\pi)$, we will deduce the first order necessary conditions. Indeed, using the fact that $\kappa =\vect{t}'\cdot \vect{n}$ and $\tau = \vect{n}'\cdot \vect{b}$, we obtain
\[
\left\{
\begin{aligned}
&\diver\left({\rm D}\Psi(\nabla\vect{X})\right) = 0 &&& \text{choosing $\vect{\eta}_1= \vect{\eta}_2 = \vect{\eta}_3 = \hat{\vect V} = 0$ and taking $\vect{V}$ arbitrary on $D$}\\
&f_b \vect{n}' = \vect{\lambda}_4 + \lambda_5 \vect{t}', &&&\text{choosing $\vect{\eta}_1= \vect{\eta}_2 = \vect{V} = 0$ and taking $\vect{\eta}_3$ arbitrary}\\
&\lambda_3 = 0, &&& \text{choosing $\vect{\eta}_1= \vect{\eta}_3 = \vect{V} = 0$ and taking $\vect{\eta}_2 = \varphi \vect t$}\\
&f_a \kappa + f_b \tau = 2 \lambda_2 -\vect{\lambda}_4 \cdot \vect{b}, &&& \text{choosing $\vect{\eta}_1= \vect{\eta}_3 = \vect{V} = 0$ and taking $\vect{\eta}_2 = \varphi \vect{n}$}\\
&- f_b' = \vect{\lambda}_4 \cdot \vect{n}, &&& \text{choosing $\vect{\eta}_1= \vect{\eta}_3 = \vect{V} = 0$ and taking $\vect{\eta}_2 = \varphi \vect{b}$.}
\end{aligned}
\right.
\]
Then we can derive the complete expressions of the Lagrange multipliers:
$$
\left\{
\begin{aligned}
&\lambda_3 = 0,\\
& \vect{\lambda}_4 = -f_b \kappa \vect{t} -f_b'\vect{n} + f_b \tau \vect{b},\\
&\lambda_2 = \frac{f_a \kappa + 2 f_b \tau}{2},\\
&\lambda_5 \kappa -f_b' = 0.
\end{aligned}
\right.
$$
Taking $\vect{\eta}_1 = \vect{\eta}_2 = \vect{\eta}_3 = 0$ and an arbitrary $\vect{V}$, we get
\begin{equation}
    \label{eq:lambda_6}
    \frac{\mu}{2} \left({\rm D}\Psi(\nabla \vect{X}\circ c)\right)\vect{\nu}_D = \vect{\lambda}_6.
\end{equation}
Finally, for $\vect{\eta}_2 = \vect{\eta}_3 = 0$ and $\vect{\eta}_1 = \hat{\vect V}\circ c$ and using \eqref{eq:lambda_6}, we obtain
\begin{align}
\nonumber
&\int_0^{2 \pi} \left(f_a \vect{n} -\lambda_5 \vect{b}\right)\cdot \vect{\eta}_1'\, d\vartheta = \int_0^{2 \pi} \left(2 \lambda_1 \vect{t} + \vect{\lambda}_4 \times \vect{n}\right)\cdot \vect{\eta}_1\, d\vartheta - \int_0^{2 \pi} \vect{\lambda}_6\cdot\left(\int_0^\vartheta \vect{\eta}_1\, ds \right)\, d\vartheta,\\\label{eq:forall_eta_1}
&\int_0^{2 \pi}\left[-\left(f_a \vect{n} -\lambda_5 \vect{b}\right)' -\left(2 \lambda_1 \vect{t} + \vect{\lambda}_4 \times \vect{n} \right)-\int_0^\vartheta \vect{\lambda}_6\, ds \right]\cdot \vect{\eta}_1\, d\vartheta = 0,
\end{align}
having used Fubini's theorem and the definition of weak derivative. Since \eqref{eq:forall_eta_1} has to be true for all $\vect{\eta}_1$ such that $\int_0^{2 \pi}\vect{t}\, d\vartheta= 0$, there exists a constant vector $\vect{\lambda}\in \R^3$ such that
$$
\begin{aligned}
-\left(f_a \vect{n} -\lambda_5 \vect{b}\right)' -2 \lambda_1 \vect{t} - \vect{\lambda}_4 \times \vect{n} - \int_0^\vartheta \vect{\lambda}_6\, ds = \vect{\lambda}.
\end{aligned}
$$
Using $\vect{\lambda}_4 =f_b \vect{n}' -\lambda_5 \vect{t}'$, $\vect{n}' = -\kappa \vect{t} + \tau \vect{b}$ and $\vect{b}' = -\tau \vect{n}$, we obtain
\[
    \begin{aligned}
     \vect{t}\bigg(f_a \kappa - 2 \lambda_1 + f_b \tau &-\vect t \cdot \int_0^\vartheta \vect{\lambda}_6\, ds\bigg) + \vect{n}\left(-f_a' -\lambda_5 \tau -\vect n \cdot \int_0^\vartheta \vect{\lambda}_6 \, ds\right) \\
     &+ \vect{b} \left(-f_a \tau +\lambda_5' + f_b \kappa - \vect b \cdot \int_0^\vartheta \vect{\lambda}_6\, ds\right) = \left(\vect{\lambda}\cdot \vect{t}\right)\vect{t}+\left(\vect{\lambda}\cdot \vect{n}\right)\vect{n}+\left(\vect{\lambda}\cdot \vect{b}\right)\vect{b}.
    \end{aligned}
\]
Renaming $\lambda_5 = \omega$ and $\vect{\lambda}_6 = \vect{\chi}$ we get the thesis.
\end{proof}
\begin{remark}
Theorem \ref{th:lagrange_multiplier_nostro} is similar to Theorem 4.2 of \cite{bevilacqua2021variational} where the statement of Theorem 4.1 has to be replaced with Theorem \ref{th:inf_dim_lag_multiplier} to apply the abstract theorem of the infinite-dimensional version of the Lagrange multipliers' method.
\end{remark}
\begin{remark}
    The statement of Theorem \ref{th:lagrange_multiplier_nostro} holds for minimizers. Indeed, computing explicit solutions (critical points for $\mathcal{E}_f$) for \eqref{eq:armonica_th} - \eqref{eq:uno_th} - \eqref{eq:due_th} - \eqref{eq:tre_th} - \eqref{eq:quattro_th} is quite hard since it is a system of ODE not in its normal form. Hence, since Theorem \ref{th:esistenza_frame} provides the existence for at least a minimizer for the functional $\mathcal{E}_f$, then necessarily \eqref{eq:armonica_th} - \eqref{eq:uno_th} - \eqref{eq:due_th} - \eqref{eq:tre_th} - \eqref{eq:quattro_th} hold at the minimum.
\end{remark}
Assuming {\em a priori} regularity, we can eliminate the Lagrange multipliers in the system \eqref{eq:armonica_th} - \eqref{eq:uno_th} - \eqref{eq:due_th} - \eqref{eq:tre_th} - \eqref{eq:quattro_th} obtaining the following result.
\begin{theorem}
\label{th:elimino_l_m}
Assume that $f$ is of class $C^3$ and $\Psi$ of class $C^2$. Let $\left((\vect{t}|\vect{n}|\vect{b}), \vect{X}\right) \in \mathcal{C}_f$ be a smooth solution of \eqref{eq:armonica_th} - \eqref{eq:uno_th} - \eqref{eq:due_th} - \eqref{eq:tre_th} - \eqref{eq:quattro_th}.
 Then at any point where $\kappa \neq 0$, it holds
 \begin{empheq}[left=\empheqlbrace]{align}
&\diver\,\left({\rm D}\Psi(\nabla \vect{X})\right) = 0 &&\hbox{on }D,\\
 \label{eq:elimino_uno}
 &\left(\frac{f'_b}{\kappa}\right)'' - \frac{\mu}{2}\left({\rm D}\Psi(\nabla \vect{X}\circ c)\right)\vect{\nu}_D \cdot \vect{b} = 2 f'_a \tau + f_a \tau' -(f_b \kappa)' +\tau^2 \frac{f'_b}{\kappa} &&\hbox{on }(0,2\pi),\\
 \label{eq:elimino_due}
 & \left(-\frac{f_a''}{\kappa} -\frac{2 \tau}{\kappa} \left(\frac{f'_b}{\kappa}\right)' -\frac{f'_b}{\kappa^2}\tau' +\frac{\tau^2 f_a}{\kappa} -f_b \tau -\frac{\mu}{2 \kappa} \left({\rm D}\Psi(\nabla \vect{X}\circ c)\right)\vect{\nu}_D \cdot \vect{n} \right) ' \\\nonumber
 &\qquad \qquad \qquad\qquad \qquad \qquad\qquad\;\, \quad+\frac{\mu}{2} \left({\rm D}\Psi(\nabla \vect{X}\circ c)\right)\vect{\nu}_D \cdot \vect{t} -\kappa f'_a -f'_b \tau = 0&&\hbox{on }(0,2\pi).
\end{empheq}
\end{theorem}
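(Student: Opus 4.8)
The plan is to read \eqref{eq:armonica_th}--\eqref{eq:quattro_th} as a system in the multipliers $\omega$, $\vect{\chi}$, $\vect{\lambda}$ and to eliminate them one at a time, the key leverage being that $\vect{\lambda}\in\R^3$ is \emph{constant} and that $\{\vect{t},\vect{n},\vect{b}\}$ obeys the Serret--Frenet system \eqref{frenet}; since $f\in C^3$, $\Psi\in C^2$ and the solution is smooth, every derivative below is classical. First, at any point where $\kappa\neq 0$, equation \eqref{eq:quattro_th} gives $\omega=f'_b/\kappa$, while \eqref{eq:tre_th} identifies $\vect{\chi}$ with the boundary trace $\tfrac{\mu}{2}({\rm D}\Psi(\nabla\vect{X}\circ c))\vect{\nu}_D$, whose frame components $\vect{\chi}\cdot\vect{t},\vect{\chi}\cdot\vect{n},\vect{\chi}\cdot\vect{b}$ are exactly the membrane terms appearing in \eqref{eq:elimino_uno}--\eqref{eq:elimino_due}. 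Substituting $\omega$ into \eqref{eq:uno_th}--\eqref{eq:due_th} then yields closed expressions for $G:=\vect{\lambda}\cdot\vect{n}$ and $H:=\vect{\lambda}\cdot\vect{b}$ in terms of $f,\kappa,\tau$ and the running integral $\vect{I}(\vartheta):=\int_0^\vartheta\vect{\chi}\,ds$.

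Next I exploit $\vect{\lambda}'=\vect{0}$. Writing $\vect{\lambda}=(\vect{\lambda}\cdot\vect{t})\,\vect{t}+G\,\vect{n}+H\,\vect{b}$ and differentiating with the help of \eqref{frenet} produces the three scalar identities
\[
(\vect{\lambda}\cdot\vect{t})'=\kappa G,\qquad G'=\tau H-\kappa(\vect{\lambda}\cdot\vect{t}),\qquad H'=-\tau G .
\]
The third one already contains no $\vect{\lambda}$: inserting the expressions for $G$ and $H$, differentiating (using $\tfrac{d}{d\vartheta}(\vect{b}\cdot\vect{I})=-\tau\,\vect{n}\cdot\vect{I}+\vect{\chi}\cdot\vect{b}$ from \eqref{frenet}) and collecting terms, the contributions $\tau\,\vect{n}\cdot\vect{I}$ cancel on the two sides and what remains is precisely \eqref{eq:elimino_uno}.

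Finally, the second identity \emph{defines} the last unknown component, $\vect{\lambda}\cdot\vect{t}=(\tau H-G')/\kappa$ (again using $\kappa\neq 0$), and a direct substitution shows that this equals $-B-\vect{t}\cdot\vect{I}$, where $B$ is the parenthesised expression carrying the outermost prime in \eqref{eq:elimino_due}. Feeding $\vect{\lambda}\cdot\vect{t}=-B-\vect{t}\cdot\vect{I}$ into the first identity $(\vect{\lambda}\cdot\vect{t})'=\kappa G$, differentiating once more and using $\tfrac{d}{d\vartheta}(\vect{t}\cdot\vect{I})=\kappa\,\vect{n}\cdot\vect{I}+\vect{\chi}\cdot\vect{t}$ together with the formula for $G$, the terms $\kappa\,\vect{n}\cdot\vect{I}$ recombine with $\kappa G$ and the remainder is exactly \eqref{eq:elimino_due}. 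The first equation, $\diver\,({\rm D}\Psi(\nabla\vect{X}))=0$ on $D$, is unchanged, so the system is closed.

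The argument carries no analytic content beyond the chain rule, so the only real difficulty is bookkeeping: one must differentiate products of $f_a,f_b$ — themselves composed with $\kappa(\vartheta),\tau(\vartheta)$ — of the moving frame, and of $\vect{I}$, then use \eqref{frenet} systematically to purge all derivatives of $\vect{t},\vect{n},\vect{b}$, and finally verify that every term carrying $\vect{t}\cdot\vect{I}$, $\vect{n}\cdot\vect{I}$ or $\vect{b}\cdot\vect{I}$ drops out, so that $\vect{\chi}$ survives only through the traces $\tfrac{\mu}{2}({\rm D}\Psi(\nabla\vect{X}\circ c))\vect{\nu}_D\cdot\vect{t},\vect{n},\vect{b}$. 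The hypotheses $f\in C^3$ and $\Psi\in C^2$ are exactly what is needed for $(f'_b/\kappa)''$ and the other second derivatives occurring in \eqref{eq:elimino_uno}--\eqref{eq:elimino_due} to be meaningful.
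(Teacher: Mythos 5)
Your proposal is correct and follows essentially the same route as the paper's proof: solve \eqref{eq:quattro_th} for $\omega=f'_b/\kappa$, then differentiate the $\vect{n}$- and $\vect{b}$-components \eqref{eq:uno_th}--\eqref{eq:due_th} using the Serret--Frenet relations and the constancy of $\vect{\lambda}$ (the paper's ``differentiate \eqref{eq:due_th} and use $\vect{b}'=-\tau\vect{n}$'' is exactly your identity $H'=-\tau G$, and its ``divide by $\kappa$, differentiate again, use $\vect{t}'=\kappa\vect{n}$'' is your $(\vect{\lambda}\cdot\vect{t})'=\kappa G$). Packaging the computation as the three scalar identities for the frame components of the constant vector $\vect{\lambda}$ is only a cleaner bookkeeping of the same argument, and the cancellations of the $\vect{n}\cdot\vect{I}$ and $\vect{b}\cdot\vect{I}$ terms that you point out are exactly those occurring in the paper.
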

\begin{proof}
The proof is just the calculation to eliminate the Lagrange multipliers. First of all, from \eqref{eq:quattro_th}, in a point in which $\kappa \neq 0$, we have
\[
    \omega = \frac{f'_b}{\kappa}.
\]
Then, differentiating \eqref{eq:due_th} with respect to $\vartheta$, using $\vect{b}' = -\tau \vect{n}$ and the expression of $\vect{\chi}$ given by \eqref{eq:tre_th}, we obtain
$$
\left(\frac{f'_b}{\kappa}\right)'' - \frac{\mu}{2}\left({\rm D}\Psi(\nabla \vect{X}\circ c)\right)\vect{\nu}_D \cdot \vect{b} = 2 f'_a \tau + f_a \tau' -(f_b \kappa)' +\tau^2 \frac{f'_b}{\kappa},
$$
which is exactly \eqref{eq:elimino_uno}. To get the second equation, we differentiate \eqref{eq:uno_th} with respect to $\vartheta$. Using $\vect{n}' = -\kappa \vect{t} + \tau \vect{b}$ and the expression of $\vect{\chi}$ in \eqref{eq:tre_th}, we get
\[
    -f_a'' -2 \left(\frac{f_b'}{\kappa}\right)' \tau -\frac{f'_b}{\kappa} \tau' + \tau^2 f-a -f_b \kappa \tau - \frac{\mu}{2}\left({\rm D}\Psi(\nabla \vect{X}\circ c)\right)\vect{\nu}_D \cdot \vect{n} + \kappa \vect t \cdot \int_0^\vartheta\vect{\lambda}_6\, ds = -\kappa\, \vect{\lambda}\cdot \vect{t}.
\]
Since $\vect{\lambda}$ is a constant vector and we are eliminating the Lagrange multiplier at the points where $\kappa \neq 0$, we can divide everything with respect to $\kappa$. Hence, differentiating another time with respect to $\vartheta$ and using $\vect{t}' = \kappa \vect{n}$, we obtain
$$
 \left(-\frac{f_a''}{\kappa} -\frac{2 \tau}{\kappa} \left(\frac{f'_b}{\kappa}\right)' -\frac{f'_b}{\kappa^2}\tau' +\frac{\tau^2 f_a}{\kappa} -f_b \tau -\frac{\mu}{2 \kappa} \left({\rm D}\Psi(\nabla \vect{X}\circ c)\right)\vect{\nu}_D \cdot \vect{n} \right) ' +\frac{\mu}{2} \left({\rm D}\Psi(\nabla \vect{X}\circ c)\right)\vect{\nu}_D \cdot \vect{t}= \kappa f'_a +f'_b \tau,
$$
which is exactly \eqref{eq:elimino_due} and this concludes the proof.
\end{proof}

\begin{remark}
\label{rem:uguali}
In the case
\[
f(\kappa, \tau):= \kappa^2 \qquad \hbox{ and } \qquad \Psi(\nabla \vect{X}):= \abs{\nabla \vect{X}}^2,
\]
the two introduced approaches, namely the parametrized representation Equation \eqref{eq:solo_superficie} - \eqref{eq:system_tot2} and the framed one \eqref{eq:elimino_uno} - \eqref{eq:elimino_due}, have to coincide. In order to verify this, we assume, for simplicity, everything smooth enough. First of all, \eqref{eq:elimino_uno} - \eqref{eq:elimino_due} for the specified choice of $f$ and $\Psi$ simplify into
\begin{empheq}[left=\empheqlbrace]{align}
 \label{eq:elimino_uno_simple}
 &-\mu \left(\nabla \vect{X} \circ c\right)\vect{\nu}_D\cdot \vect{b} = 4 \kappa' \tau + 2 \kappa \tau',\\
 \label{eq:elimino_due_simple}
 & \left(-\frac{2 \kappa''}{\kappa} + 2 \tau^2 - \frac{\mu}{\kappa} \left(\nabla \vect{X}\circ c\right)\vect{\nu}_D\cdot \vect{n} \right)' +\mu\left(\nabla \vect{X} \circ c\right)\vect{\nu}_D\cdot \vect{t} -2 \kappa \kappa' = 0.
\end{empheq}
To obtain \eqref{eq:elimino_uno_simple} - \eqref{eq:elimino_due_simple}, we can rewrite the parametrized curve $\vect{r}$ in terms of the orthonormal frame $(\vect{t}|\vect{n}|\vect{b})$, i.e.
$$
2 \vect{r}''' = 2 \vect t'' = 2 \left(\kappa \vect{n}\right)' = 2 \kappa' \vect{n} + 2 \kappa \vect{n}' = 2 \kappa' \vect{n} + 2 \kappa \left(-\kappa \vect{t} + \tau \vect{b}\right),
$$
hence considering the vertical component with respect to $\vect r$, we have
\begin{equation}
    \label{eq:r_tre_ort}
    \left(2 \vect{r}'''\right)_{\perp} = 2 \kappa' \vect{n} + 2 \kappa \tau \vect{b}.
\end{equation}
Let the function $\vect{g}$ be defined as
\begin{equation}
    \label{eq:g}
    \vect{g}(\theta):= \int_0^\theta(\nabla \vect{X} \circ c)\vect{\nu}_D\,ds.
\end{equation}
Of course 
\begin{equation}
    \label{eq:g_perp}
    \vect{g}_{\perp} =\prt{\vect{g} \cdot \vect{n}}\vect{n} + \prt{\vect{g} \cdot \vect{b}}\vect{b}.
\end{equation}
To eliminate the constant obtained through the parametrized approach we differentiate \eqref{eq:r_tre_ort} and \eqref{eq:g_perp} with respect to $\vartheta$ and using \eqref{frenet} we get
\[
\begin{aligned}
((2 \vect{r}''')_\perp)' &= \vect{t} \left(- 2 \kappa \kappa'\right) + \vect{n} \left(2 \kappa'' -2 \kappa \tau^2\right) + \vect{b}\left(4 \tau \kappa' + 2 \kappa \tau'\right),\\
-(\vect g _\perp)' &= \vect{t}\left(\kappa \vect{g}\cdot \vect{n}\right) + \vect{n}\left(-\vect{g}'\cdot \vect{n} +\kappa \vect{g} \cdot \vect{t}\right) + \vect{b}\left(-\vect{g}' \cdot\vect{b}\right).
\end{aligned}
\]
Considering the component along $\vect{b}$, we get \eqref{eq:elimino_uno_simple}, indeed
\[
4 \tau \kappa' + 2 \kappa \tau' = -\vect{g}' \cdot\vect{b} \stackrel{\eqref{eq:g}}{=} -\mu \left(\nabla \vect{X} \circ c\right)\vect{\nu}_D\cdot \vect{b}.
\]
Then, considering the projection along $\vect{n}$, differentiating with respect to $\vartheta$ and using $\vect{g}\cdot \vect{n} =-2 \kappa'$, i.e. the component along the tangent vector, we get
\[
\left(-\frac{2 \kappa''}{\kappa} + 2 \tau^2 -\frac{\vect{g'}\cdot \vect{n}}{\kappa}\right)' = -\vect{g}' \cdot \vect{t} + 2 \kappa \kappa',
\]
which is exactly \eqref{eq:elimino_due_simple} using the definition of $\vect{g}'$ given \eqref{eq:g}.

\end{remark}

\section{Numerical approach}\label{sec:third-approach}

In this section, we introduce an approach to be employed specifically in the numerical simulations. It is tailored for the requirements of numerical simulations based upon the finite element method \cite{brenner2008mathematical,ciarlet2002finite,ern2004theory}. Since a typical finite element method hinges upon an integral formulation of the problem, the biggest obstacle in the application of the formulation reported in the parametrized version, introduced in Section \ref{sec:first-approach}, is the pointwise evaluation of constraints \eqref{eq:velocita_1} and \eqref{eq:traccia}. For what concerns the length preserving constraint \eqref{eq:velocita_1}, we will replace it by an integral formulation introducing a Lagrange multiplier to penalize it. Referring to the trace constraint \eqref{eq:traccia}, we will instead identify the curve $\vect{r}$ with the trace of the map $\vect{X}$. This implies that we are going to reformulate the problem and the {\em modified} constraints directly on the single membrane, i.e. on the disc-type parametrization $\vect{X}$.

\subsection{Setting of the problem}
Identifying the curve with the boundary of the disc-parametrization of the elastic membrane, we introduce 
\[
\vect{X} \in W^{3,2}(D;\R^3).
\]
Then $\hat{\vect X} \in W^{\frac{5}{2}, 2}(\partial D;\R^3)$. In order to define the involved constraints, on $\partial D$, we introduce the following tangential directional derivative
\begin{equation*}
\partial_\theta:
\left\{
\begin{aligned}
W^{\frac{5}{2},2}(\partial D; \R^3) &\to W^{\frac{3}{2},2}(\partial D; \R^3),\\
\hat{\vect{X}} &\mapsto\frac{{\rm d}}{{\rm d}\vartheta}\left(\hat{\vect{X}}\circ c(\vartheta)\right)
\end{aligned}
\right.
\end{equation*}
where $\hat{\vect{X}}$ is the trace of $\vect{X}$ and $c(\vartheta) = (\cos \theta, \sin \theta)$.
Similarly $\partial^2_{\theta} \hat{\vect{X}} = \partial_\theta \left(\partial_\theta \hat{\vect{X}}\right)$, obtaining that $\partial^2_{\theta} \hat{\vect{X}} \in W^{ \frac{1}{2},2}(\partial D, \R^3)$.
With these notations, the closure of the curve and the tangent vector, together with the trace constraint are unnecessary, since they are satisfied by definition. Moreover, in the following, all the integrals defined over the boundary of the disc $\partial D$ have to be interpreted as a line integral over the interval $[0, 2\pi]$. Concerning the length preserving constraint \eqref{eq:velocita_1}, which is not {\em a priori} true here, it can equivalently be reformulated as
\begin{equation}
\left|\partial_\theta \hat{\vect{X}}\right| = 1,\quad\text{on}\quad\partial D.
\label{eq:velocita_3}
\end{equation}
Since this approach aims to be applied in numerical experiments, we are going to substitute the clamping condition \eqref{eq:r_0}, which defines the exact position of the curve in the space, with an alternative integral constraint, i.e.
\begin{equation}
\int_D \vect{X} \; du dv = \vect{0}.
\label{eq:clamp_3b}
\end{equation}
Precisely, \eqref{eq:clamp_3b} provides an alternative way to control the variations of constants, avoiding to get two solutions which differ by a constant rotation, i.e. by a rigid motion.

Therefore, the set of constraints is given by
\[
    \label{eq:set_constraint_3}
   \mathcal{C}_{n}= \left\{\vect{X}\in W^{3,2}(D; \R^3): \text{\eqref{eq:velocita_3} \text{ and } \eqref{eq:clamp_3b} hold true}\right\},
\]
where the subscript $n$ refers to the {\em numerical approach} adopted here.
Finally, we introduce the energy functional: let $\mathcal{E}_n \colon \mathcal{C}_{n} \to [0,+\infty)$ be given by
\begin{equation}
    \label{eq:functional_3}
   \mathcal{E}_n[\vect{X}] = \int_{\partial D} \abs{\partial^2_{\theta} \hat{\vect{X}}}^2\, d\theta + \frac{\mu}{2}\int_D \textgoth{C}\nabla \vect{X}:\nabla\vect{X}\, dudv,
\end{equation}
where the reason of choosing $\vect{X} \in W^{3,2}(D;\R^3)$ is apparent: the presence of the term $\partial^2_{\theta} \hat{\vect{X}}$ forces to assume an higher regularity on the entire domain $D$ (rather than its boundary $\partial D$ alone), compared to the weaker regularity assumptions laid out in Sections \ref{sec:first-approach} and \ref{sec:second-approach}. This aspect will also have important implications in the choice of an appropriate finite element method, as we will discuss in Section \ref{sec:numerical-results}.

\subsection{First-order necessary conditions}

To derive the first-order necessary conditions, we slightly modify the energy functional to take into account the length preserving constraint \eqref{eq:velocita_3}. Indeed, let $\widetilde{\mathcal{E}}_n \colon  \widetilde{\mathcal{C}}_n \to \R$ be given by
\begin{equation}
    \label{eq:F_ell}
    \widetilde{\mathcal{E}}_n[\vect{X}, \ell]= \int_{\partial D} \abs{\partial^2_{\theta} \hat{\vect{X}}}^2\, d\vartheta + \frac{\mu}{2}\int_D \textgoth{C}\nabla \vect{X}:\nabla\vect{X}\, dudv + \int_{\partial D} \ell \; \left[\left|\partial_\theta \hat{\vect{X}}\right|^2 - 1\right] d\vartheta,
\end{equation}
where
\[
    \widetilde{\mathcal{C}}_n= \left\{(\vect{X}, \ell) \in W^{3,2}(D; \R^3) \times W^{\frac{1}{2},2}(\partial D;\R): \eqref{eq:clamp_3b} \hbox{ holds}\right\}.
\]
Roughly speaking, we are going to penalize the square of \eqref{eq:velocita_3} through an infinite-dimensional Lagrange multiplier defined on $\partial D$.

To derive the first-order necessary conditions, we consider the following admissible variations for both the membrane $\vect{X}$ and the Lagrange multiplier $\ell$, i.e.
\begin{align}
    \label{eq:variation_X_3}
    &\vect{X}_t(u,v):= \vect{X}(u,v) + t\ \vect{V}(u,v), &&\forall\, \vect{V} \in C^\infty(\overline{D}; \R^3) \hbox{  and  } \int_D \vect{V} \; du dv = \vect{0},\\
    \label{eq:variation_ell_3}
    &\ell_s(\vartheta):= \ell(\vartheta) + s\ m(\vartheta) &&\forall\, m \in C^\infty(\overline{\partial D}; \R).
\end{align}
We notice that the introduced variations for the two unknowns $(\vect{X}, \ell)$ are independent in this formulation, which makes exhibiting an admissible variation considerably easier than the one used in Section \ref{sec:first-approach}. Now, we compute
\begin{equation*}
\frac{{\rm d}}{{\rm d t}} \widetilde{\mathcal{E}}_n[\vect{X}_t, \ell_s]_{\bigr|_{\substack{(t, s) = \vect{0}}}} \qquad \text{and} \qquad \frac{{\rm d}}{{\rm d s}} \widetilde{\mathcal{E}}_n[\vect{X}_t, \ell_s]_{\bigr|_{\substack{(t, s) = \vect{0}}}}.
\end{equation*}

Let us start from the partial derivative w.r.t. $t$. We get
\begin{equation*}
\begin{aligned}
\frac{{\rm d}}{{\rm d t}} \widetilde{\mathcal{E}}_n\left[\vect{X}_t, \ell_s\right]_{\bigr|_{(t, s) = \vect{0}}} = \frac{{\rm d}}{{\rm d t}}\left\{\underbrace{\int_{\partial D} \abs{\partial^2_{\theta} \hat{\vect{X}}_t}^2\, d\theta}_{\widetilde{\mathcal I_1}} + \underbrace{\frac{\mu}{2} \int_D \textgoth{C}\nabla \vect{X}_t:\nabla \vect{X}_t\, du dv}_{\widetilde{\mathcal I_2}} + \underbrace{\int_{\partial D} \ell_s \; \left[\left|\partial_\theta \hat{\vect{X}}_t\right|^2 - 1\right] d\theta}_{\widetilde{\mathcal I_3}}\right\}_{\biggr|_{\substack{(t, s) = \vect{0}}}},
\end{aligned}
\end{equation*}
and we study each term separately. Concerning $\widetilde{\mathcal{I}_1}$, we get
\begin{equation*}
\frac{{\rm d}}{{\rm dt}}{\widetilde{\mathcal I_1}}_{\big|_{(t, s) = \vect 0}}= \int_{\partial D} \frac{{\rm d}}{{\rm d t}}  \partial^2_{\theta} \hat{\vect{X}}_t \cdot \partial^2_{\theta} \hat{\vect{X}}_t \, d\theta_{\bigr|_{\substack{(t, s) = \vect{0}}}} = 2 \int_{\partial D} \partial^2_{\theta} \hat{\vect{V}} \cdot \partial^2_{\theta} \hat{\vect{X}}\, d\theta.
\end{equation*}
For the membrane contribution $\widetilde{\mathcal{I}}_2$, we obtain
\begin{equation*}
\frac{{\rm d}}{{\rm d t}}{\widetilde{\mathcal I_2}}_{\bigr|_{\substack{(t, s) = \vect{0}}} }= \mu \int_D \textgoth{C}\nabla \vect{X}: \nabla \vect{V}\, dudv.
\end{equation*}
Finally, the last term simplifies into
\begin{equation*}
\frac{{\rm d}}{{\rm d t}}{\widetilde{\mathcal I_3}}_{\bigr|_{\substack{(t, s) = \vect{0}}}} = \int_{\partial D} \frac{{\rm d}}{{\rm d t}} \left(\ell_s \; \left[\left|\partial_\theta \hat{\vect{X}}_t\right|^2 - 1\right]\right)_{\bigr|_{\substack{(t, s) = \vect{0}}}}\, d\theta
= \int_{\partial D} \ell \; \frac{{\rm d}}{{\rm d t}} \left[\left|\partial_\theta \hat{\vect{X}}_t\right|^2\right]_{\bigr|_{\substack{t = 0}}} d\theta = 2 \int_{\partial D} \ell \; \partial_\theta \hat{\vect{V}} \cdot \partial_\theta \hat{\vect{X}} \; d\theta.
\end{equation*}

Moving to the partial derivative w.r.t. $s$, the only non-zero term in the differentiation is the last one of Equation \eqref{eq:F_ell}, since the other ones do not depend on $s$. Hence, we get
\begin{equation*}
\begin{aligned}
\frac{{\rm d}}{{\rm d s}} \widetilde{\mathcal{E}}_n\left[\vect{X}_t, \ell_s\right]_{\bigr|_{\substack{(t, s) = \vect{0}}}} = \frac{{\rm d}}{{\rm d s}} \int_{\partial D} \ell_s \; \left[\left|\partial_\theta \hat{\vect{X}}_t\right|^2 - 1\right] d\theta_{\bigr|_{\substack{(t, s) = \vect{0}}}} = \int_{\partial D} m \; \left[\left|\partial_\theta \hat{\vect{X}}\right|^2 - 1\right] d\theta.
\end{aligned}
\end{equation*}

Collecting all the terms and imposing both partial derivatives to be zero to seek critical points, we end up with
\begin{equation}
    \label{eq:system_tot1}
    \left\{
\begin{aligned}
&2 \int_{\partial D} \partial^2_{\theta} \hat{\vect{V}} \cdot \partial^2_{\theta} \hat{\vect{X}}\, d\theta + \mu \int_D \textgoth{C}\nabla \vect{X}: \nabla \vect{V}\, dudv + 2 \int_{\partial D} \ell \; \partial_\theta \hat{\vect{V}} \cdot \partial_\theta \hat{\vect{X}} \; d\theta = 0,\\
&\int_{\partial D} m \; \left[\left|\partial_\theta \hat{\vect{X}}\right|^2 - 1\right] d\theta= 0,
\end{aligned}
\right.
\end{equation}
which, in the following, we will refer to them as the first-order necessary conditions and we are going to study numerically in the next section.

\begin{remark}[Differences with Theorem \ref{thm:system_tot}]
Even though the techniques employed to derive the first-order necessary conditions are quite similar to the ones adopted in the parametrized approach introduced in Section \ref{sec:first-approach}, there are several aspects in the computations to derive \eqref{eq:system_tot1} that favor their applicability for numerical simulations.
\begin{itemize}
\item The definition of the variation for the curve (and thus, for the membrane) in Theorem \ref{thm:system_tot} requires a pointwise orthogonality property between the arbitrary vector field $\vect{\eta}$ and the curve $\vect{r}$ (see Lemma \ref{lemma:variazione_curva}) to satisfy the length preserving constraint \eqref{eq:velocita_1}. However, imposing such a request in a finite element sense is not easy and elementary. In contrast, here the variation $\vect{X}_t$ for the membrane $\vect{X}$ does not require such a property (see \eqref{eq:variation_X_3}).
\item The optimality conditions \eqref{eq:system_tot1} are just reported in their integral formulation. Indeed, differently from Theorem \ref{thm:system_tot} where the strong form is derived thanks to the gained regularity, in the numerical formulation there is no need to obtain the strong form since finite element methods only require the integral formulation, rather than the strong one.
\item In the parametrized approach, the {\em explicit} dependence of the Lagrange multiplier $\lambda \in \mathbb{R}^+$ drops out from the first-order necessary conditions in Theorem \ref{thm:system_tot} because of \eqref{eq:traccia}. Differently, in this setting, the infinite-dimensional Lagrange multiplier $\ell \in W^{\frac{1}{2},2}(\partial D;\R)$ appears in \eqref{eq:system_tot1}, implying that our numerical scheme will seek both the disc-type parametrization of the membrane $\vect{X}$ and the Lagrange multiplier $\ell$. Precisely, in the following Section \ref{sec:numerical-results}, we will report only results for  $\vect{X}$, since the visualization of $\ell$ hardly offers any physically relevant insights.
\end{itemize}
\end{remark}

\begin{remark}[On the penalization of the square of \eqref{eq:velocita_3} in the definition of $\widetilde{\mathcal{E}}_n$]

To impose the length preserving constraint \eqref{eq:velocita_3}, in the definition of the energy functional $\widetilde{\mathcal{E}}_n$ in \eqref{eq:F_ell} we decided to penalize the square of the velocity, i.e. $\left|\partial_\theta \hat{\vect{X}}\right|^2 - 1$. Such a choice results in getting a specific type of nonlinearity in the first-order necessary conditions:  optimality conditions are quadratically nonlinear. A more natural choice might be the penalization of just  $\left|\partial_\theta \hat{\vect{X}}\right| - 1$, i.e.
\begin{equation}
    \label{eq:derivata_frazione}
        \int_{\partial D} \ell \; \left[\left|\partial_\theta \hat{\vect{X}}\right| - 1\right] d\theta,
\end{equation}
which is nothing but the weak formulation though a Lagrange multiplier of the pointwise constraint defined in \eqref{eq:velocita_3}.
However, substituting $\widetilde{\mathcal{I}}_3$ with \eqref{eq:derivata_frazione} and performing the derivative with respect to $t$, as in the computations to get \eqref{eq:system_tot1}, results in a factor which looks like
$$
\frac{1}{\left|\partial_\theta \hat{\vect{X}}\right|},
$$
i.e. we get a nonlinear problem with fractional terms. Such a rational term may turn out to be more challenging than a quadratic one, because while solving the problem by means of iterative methods reported in Section \ref{sec:numerical-results}, there might exists a point $\tilde{\vartheta} \in \partial D$ such that $\left|\partial_\theta \hat{\vect{X}}(\tilde{\vartheta})\right|= 0$, causing a breakdown of the iterative scheme due to a division by zero.

\end{remark}

\section{Numerical results}\label{sec:numerical-results}

\subsection{Discretization of the problem}
\label{sec:subsec_discretization}
The approach outlined in Section \ref{sec:third-approach} is chosen to obtain computational results based on a finite element discretization. Towards this goal, the disc $D$ is partitioned into a triangulation (mesh) of non overlapping triangular cells. We will refer to such triangulation as $D_\delta$ in the following. The triangulation is generated through \texttt{gmsh} \cite{geuzaine2009gmsh}, and contains 3219 nodes and 6438 cells.
The mesh is unstructured, and is refined close to $\partial D$, i.e. to the elastic curve, ensuring that the cell size close to the boundary curve is approximately half of the mesh size at the origin of the disc, see Figure \ref{fig:mesh}.
\begin{figure}
\centering
\includegraphics[scale=0.27]{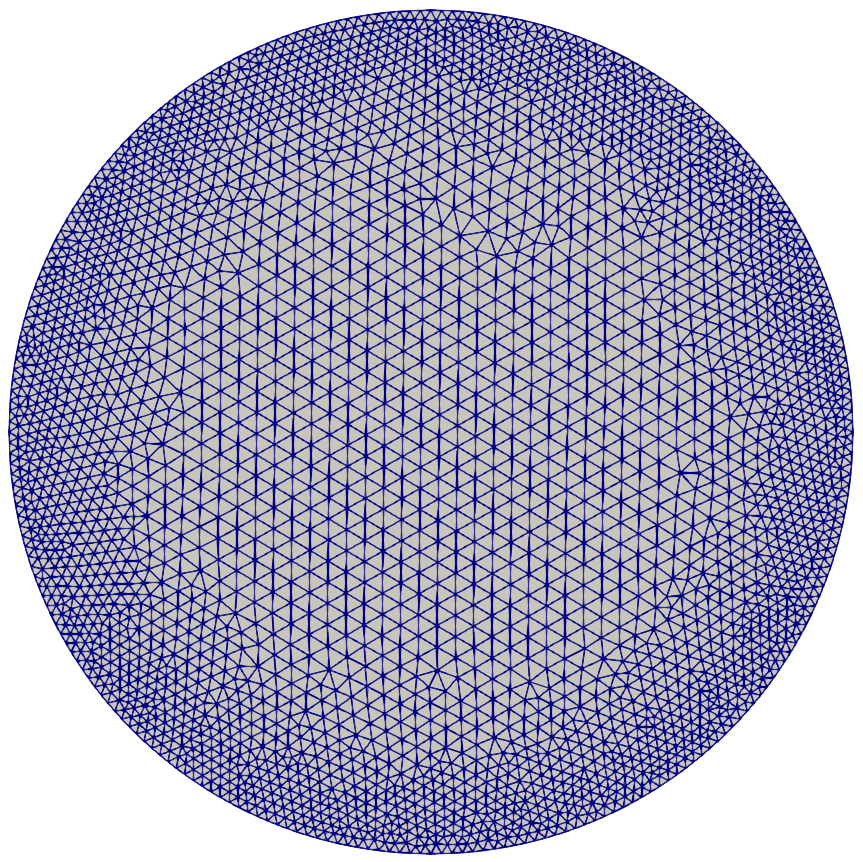}
\caption{Triangulation $D_\delta$ of the disc $D$, showing a mesh refinement on the boundary.}\label{fig:mesh}
\end{figure}

An approximation $\vect{X}_\delta$ of the membrane $\vect{X}$ is sought in a finite dimensional space $\mathcal{X}_\delta$ based on discontinuous Lagrange finite elements of degree 2 on triangles, i.e. the restriction of each component of $\vect{X}_\delta$ to any cell $T \in D_\delta$ is a second order polynomial.
The finite element space $\mathcal{X}_\delta$ thus replaces the infinite dimensional space $W^{3,2}(D; \R^3)$ in the formulation reported in Section \ref{sec:third-approach}.
Concerning the approximation $\ell_\delta$ of the Lagrange multiplier $\ell$ in a finite dimensional space $\mathcal{L}_\delta$, we consider continuous Lagrange finite elements of degree 2 on segments. Such a finite dimensional approximation of $W^{\frac{1}{2},2}(\partial D;\R)$ contains functions which are globally continuous and such that any element of $\mathcal{L}_\delta$ restricted to any boundary face of $D_\delta$ (i.e., any segment in the polygonal approximation $\partial D_\delta$ of $\partial D$) is a second order polynomial. 
Therefore, there are two main differences between the finite dimensional spaces $\mathcal{X}_\delta$ and $\mathcal{L}_\delta$:
\begin{enumerate}
    \item $\mathcal{X}_\delta$ is defined on the entire mesh $D_\delta$, while $\mathcal{L}_\delta$ is only defined over $\partial D_\delta$;
    \item $\mathcal{X}_\delta$ is a space of local polynomials which are globally (possibly) discontinuous on $D_\delta$, while $\mathcal{L}_\delta$ is composed of local polynomials which are also globally continuous on $\partial D_\delta$.
    \end{enumerate}
Concerning the first consideration, it does not need to be justified any further, since it naturally stems from the corresponding infinite dimensional spaces $W^{3,2}(D; \R^3)$ and $W^{\frac{1}{2},2}(\partial D;\R)$, respectively. Regarding the choice of discontinuous elements for $\mathcal{X}_\delta$ in the second comment, it is strongly related to practical implementation limitations. Indeed, an approximation of $W^{3,2}(D; \R^3)$ by means of continuous finite elements would actually require $C^2$ continuity across cells, rather than the standard $C^0$ continuity of Lagrange elements. Finite elements with high-order continuity are certainly less commonly studied than their corresponding standard counterparts; we mention here the Argyris triangle \cite{argyris1968tuba} or the Hsieh-Clough-Toucher triangle \cite{clough1965finite} as notable cases of elements with $C^1$ continuity. However, the implementation of finite elements with high-order continuity is not readily available in the \texttt{FEniCS} project \cite{logg2012automated}, the software we use for our numerical simulations together with the \texttt{multiphenics} \cite{multiphenics} library, which easily allows to restrict the approximation $\ell_\delta$ of the Lagrange multiplier to just the boundary $\partial D_\delta$.

The finite element problem to be solved is: find $(\vect{X}_\delta, \ell_\delta) \in \mathcal{X}_\delta \times \mathcal{L}_\delta$ such that

\begin{equation}\label{eq:system_tot_3_fem}
\left\{
\begin{aligned}
&\mu \int_{D_\delta} \textgoth{C}\nabla \vect{X}_\delta: \nabla \vect{V}_\delta\, dudv
+ 2 \int_{\partial D_\delta} \partial^2_{\theta} \hat{\vect{V}}_\delta \cdot \partial^2_{\theta} \hat{\vect{X}}_\delta\, d\theta
+ 2 \int_{\partial D_\delta} \ell_\delta \; \partial_\theta \hat{\vect{V}}_\delta \cdot \partial_\theta \hat{\vect{X}}_\delta \; d\theta\\
&\qquad \underbrace{- \sum_{e \in E_\delta} \mu \int_e \left\{\!\!\left\{\textgoth{C}\nabla \vect{X}_\delta\right\}\!\!\right\}_e : \left[\!\left[\vect{V}_\delta\right]\!\right]_e\, de
- \sum_{e \in E_\delta} \mu \int_e \left\{\!\!\left\{\textgoth{C}\nabla \vect{V}_\delta\right\}\!\!\right\}_e : \left[\!\left[\vect{X}_\delta\right]\!\right]_e\, de
+ \sum_{e \in E_\delta} \frac{\alpha}{h_e} \int_e \left[\!\left[\vect{X}_\delta\right]\!\right]_e \cdot \left[\!\left[\vect{V}_\delta\right]\!\right]_e \, de}_{\widetilde{\mathcal{D}}}\\
&\qquad \underbrace{+ \varepsilon \int_{D_\delta} \vect{X}_\delta \cdot \vect{V}_\delta \; du dv}_{\widetilde{\mathcal{A}}} = 0,\\
&\int_{\partial D_\delta} m_\delta \; \left[\left|\partial_\theta \hat{\vect{X}}_\delta\right|^2 - 1\right] d\theta = 0
\end{aligned}
\right.
\end{equation}
for every $(\vect{V}_\delta, m_\delta) \in \mathcal{X}_\delta \times \mathcal{L}_\delta$. Precisely, \eqref{eq:system_tot_3_fem} is obtained from \eqref{eq:system_tot1} by a standard Galerkin method on the finite dimensional space $\mathcal{X}_\delta \times \mathcal{L}_\delta$, with the addition of two new terms, denoted by $\widetilde{\mathcal{D}}$ and $\widetilde{\mathcal{A}}$.

The additional term $\widetilde{\mathcal{D}}$ appears due to the choice of discontinuous finite elements for $\mathcal{X}_\delta$, getting that the numerical scheme is a symmetric interior penalty discontinuous Galerkin method, see e.g. \cite{di2011mathematical,hesthaven2007nodal,riviere2008discontinuous}. Here, $E_\delta$ is the set of all the interior edges in $D_\delta$ (i.e., the union of all segments $e \in \partial T$ for some $T \in D_\delta$, excluding the boundary edges $e \in \partial D_\delta$), $h_e$ is the length of an edge $e \in E_\delta$. 
For a fixed $e \in E_\delta$, let $T^+$ and $T^-$ be the two cells which share $e$ (i.e., $e = \overline{T^+} \cap \overline{T^-}$), and let $\mathbf{n}$ denote the outer normal to $e$ from $T^+$; then, the average $\left\{\!\!\left\{ \cdot \right\}\!\!\right\}_e$ and jump $\left[\!\left[ \cdot \right]\!\right]_e$ operators appearing in $\widetilde{\mathcal{D}}$ are defined as
\begin{align*}
\left(\left\{\!\!\left\{\textgoth{C}\right\}\!\!\right\}_e\right)_{ij} &= \frac{1}{2}\left({\textgoth{C}_{ij}}_{|_{\overline{T^+} \cap e}} + {\textgoth{C}_{ij}}_{|_{\overline{T^-} \cap e}}\right), \\
\left(\left[\!\left[  \vect{X} \right]\!\right]_{e}\right)_{ij} &= (\vect{X}_i|_{\overline{T^+} \cap e} + \vect{X}_i|_{\overline{T^-} \cap e}) \mathbf{n}_j.\\
\end{align*}
Finally, $\alpha$ is a penalty coefficient (which we choose equal to $10^4$) which appears in the third addend in $\widetilde{\mathcal{D}}$ to penalize jumps of the solution across neighboring elements. One could also add further similar terms that penalize jumps of first and/or second order derivatives; such terms are often omitted (see e.g. \cite{riviere2008discontinuous}), since, with the proper choice of $\alpha$, our experience in the preparation of the results presented below is that a penalization on the jump of the polynomial solution results in practice in small jumps on its derivatives too.

The additional term $\widetilde{\mathcal{A}}$, instead, represents a penalization of the constraint \eqref{eq:clamp_3b}, since such a constraint is not enforced in the finite element space $\mathcal{X}_\delta$. In producing the numerical results which are discussed afterwards, we have experimentally observed that a small value $\varepsilon = 10^{-5}$ is sufficient to prevent rigid body motions.

\begin{remark}[On polynomial degree equal to 2]
We have seen in Section \ref{sec:third-approach} that the presence of the term
$$
2 \int_{\partial D} \partial^2_{\theta} \hat{\vect{V}} \cdot \partial^2_{\theta} \hat{\vect{X}}\, d\vartheta
$$
in \eqref{eq:system_tot1} forces us to assume $\vect{X} \in W^{3,2}(D;\R^3)$. As discussed above, this has profound numerical implications: we are forced to employ a discontinuous Galerkin method, primarily due to the unavailability of $C^2$ finite elements. Moreover, such a term implies the inability to use lowest order finite elements, i.e. polynomials of degree equal to 1. Indeed, $\partial^2_{\theta} \hat{\vect{V}}_\delta$ and $\partial^2_{\theta} \hat{\vect{X}}_\delta$ would otherwise be identically zero, and thus \eqref{eq:system_tot_3_fem} would ignore the boundary contribution to the energy functional.
\end{remark}

The discrete first-order necessary conditions \eqref{eq:system_tot_3_fem} result in a finite-dimensional quadratically nonlinear problem of total dimension 38355 degrees of freedom, of which 37851 for $\vect{X}_\delta$ and 504 unknowns for $\ell_\delta$. We solve such a nonlinear problem with the \texttt{SNES} solver, part of the \texttt{PETSc} library \cite{petsc-user-ref}, with the following choices:
\begin{itemize}
\item a backtracking procedure in the selection of the step length at every nonlinear iteration; in particular, SNES employs a cubic backtracking \cite{petsc-user-ref}. This procedure is beneficial in ensuring convergence of the nonlinear solver, as we have observed that simpler options (e.g., constant step length) often lead to divergence;
\item rescaling of the initial guess $\vect{X}_{guess}$ so that its perimeter is $2\pi$. This choice ``helps'' the numerical code to fulfil the nonlinear length preserving constraint. Indeed, we observed that providing an initial guess with considerably smaller or larger perimeters often resulted in stagnation to a nonconverged solution with the failure of the length preserving constraint, or even divergence.
\end{itemize}

The stopping criteria for the nonlinear iteration is realized by setting a suitable tolerance $\left(\hbox{approximately } <10^{-6}\right)$ under which we can deduce that the residual norm is very small, i.e. the discrete variational formulation is satisfied. Moreover, {\em a posteriori}, we check the constraints are fulfilled: the length of the solution $\vect{X}_{sol}$ at convergence must be close to $2 \pi$ and the absolute value of the velocity $\abs{\vect{r}'}$ must be close to one. 

We present next the numerical results obtained for two different choices for the linear elasticity tensor $\textgoth{C}$.

\subsection{Identity tensor}
\label{sec:numerical_C_identity}
First of all, we consider the case where the elastic tensor $\textgoth{C}$ coincides with the identity. 
In this section we choose different initial guesses $\vect{X}_{guess}$ for the parametrization, properly {\em rescaled} as discussed above, and we solve the nonlinear weak formulation as presented in Section \ref{sec:subsec_discretization}. All the guesses are written in polar coordinates choosing $\rho \in [0,1]$ to be the radial coordinate, while $\vartheta \in [0, 2\pi]$ is the azimuth angular coordinate. 

We consider as first guess datum $\vect{X}_{guess}$ a disc of radius one, whose formulation in polar coordinates is
$$\vect{X}_{guess}=(-\rho \sin \vartheta, \rho \cos \vartheta,0).
$$
We expect not to be any variations since the disc is a critical point of the functional $\mathcal{E}_n$ defined in \eqref{eq:functional_3}, i.e. we are starting ``close enough'' to a stationary solution. Indeed, the numerical solution $\vect{X}_{sol}$ is exactly the same disc, in the same plane, see Figure \ref{fig:cerchio_guess}. 
\begin{figure}[h!]
\centering
\includegraphics[scale=0.3]{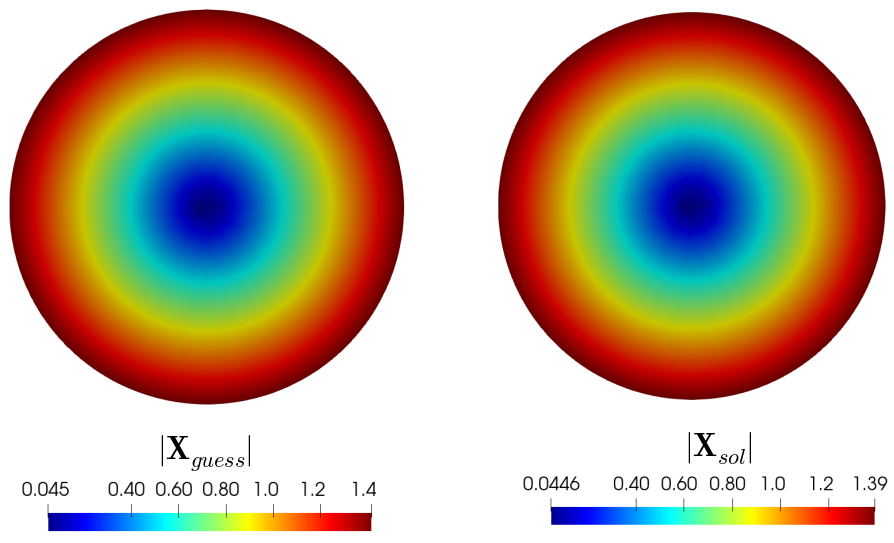}
\caption{Magnitude of the initial guess $\vect{X}_{guess}=(-\rho \sin\vartheta, \rho \cos \vartheta ,0)$ (left), and of the corresponding obtained solution $\vect{X}_{sol}$ (right), which is in this case a circle.}\label{fig:cerchio_guess}
\end{figure}

We consider next
$$
\vect{X}_{guess} = \left(\rho\sin\vartheta, \rho\cos\vartheta, \rho \sin\vartheta\right)
$$
as the second initial guess, which is the polar representation of an ellipse. Again in this case, see Figure \ref{fig:ellisse_guess}, the numerical minimization converges to the disc in the same plane of the initial ellipse 
confirming that nonlinear iterations converge to a stable critical point when starting close enough to it. 
\begin{figure}[h!]
\centering
\includegraphics[scale=0.25]{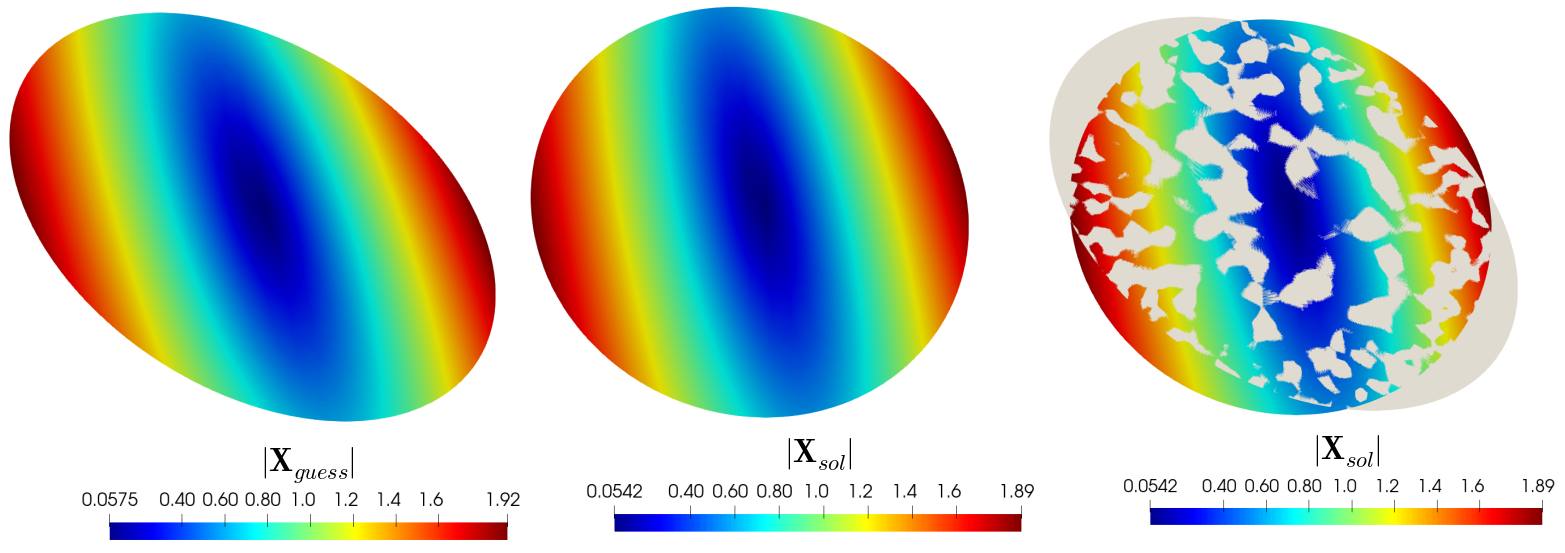}
\caption{Magnitude of the initial guess $\vect{X}_{guess}=(\rho \sin\vartheta, \rho \cos\vartheta,\rho \sin\vartheta)$ (left), and of the corresponding obtained solution $\vect{X}_{sol}$ (center), which in this case is a circle. On the right, we plot the initial guess (in gray) and the obtained solution (colored) to show that they lay in the same plane. }\label{fig:ellisse_guess}
\end{figure}

Finally, we prescribe a non-planar initial guess: the paraboloid in Figure \ref{fig:paraboloide_guess} is given by
$$\vect{X}_{guess} = \left(\rho \cos\vartheta, \rho \sin\vartheta, - \rho^2\right).
$$ 
Running the numerical code, we obtain as a critical point the disc in the plane passing through the maximum point $(0,0,0)$ and perpendicular to the axis of the paraboloid, that is the solution is progressively flattened on such a plane during the nonlinear iterations. Indeed, due to a change of sign in the curvature, any other configuration would cost more in terms of energy.
\begin{figure}[h!]
\centering
\includegraphics[scale=0.25]{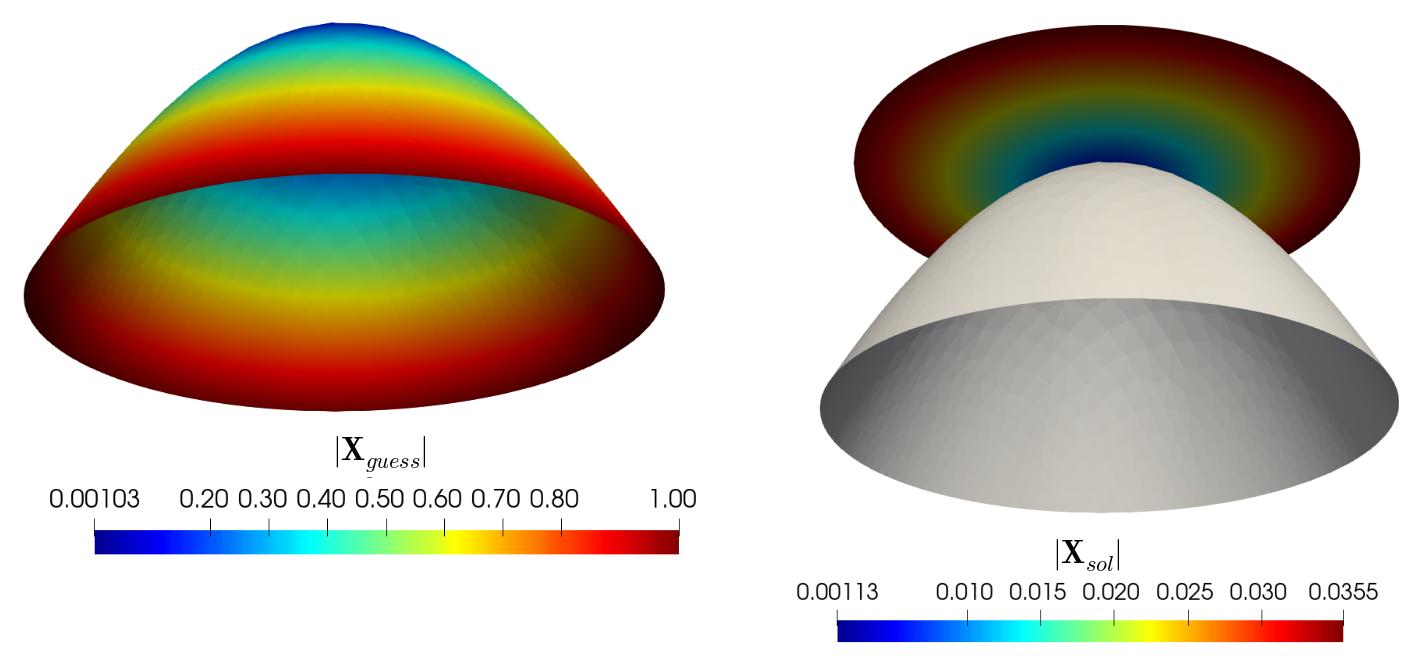}
\caption{Magnitude of the initial guess $\vect{X}_{guess}=(\rho\cos\vartheta,\rho \sin\vartheta,-\rho^2)$ (left), and of the corresponding obtained solution $\vect{X}_{sol}$ overlaid with the initial guess in grey (right) to show the plane which contains $\vect{X}_{sol}$.}\label{fig:paraboloide_guess}
\end{figure}

To try to break the circular symmetry, we consider as initial guesses two {\em ad hoc} functions, i.e. in Figure \ref{fig:patatina_guess} the initial guess is chosen to be $$
\vect{X}_{guess} = \left(\rho\cos\vartheta, - \rho\sin\vartheta, \rho^2 \sin^2\vartheta\right),
$$ 
while, to plot Figure \ref{fig:calzascarpe_guess}, we consider 
$$\vect{X}_{guess}=\left(\rho\cos\vartheta, - \rho\sin\vartheta, \rho^2 \sin^2\vartheta+ \sin\left(\frac{\pi}{2}\rho\right)\cos\vartheta\right).
$$
In both cases, a critical point is the disc. The interesting aspect is that, if in the previous results the lying plane of the disc can be deduced just by simple geometrical considerations, here the symmetry is broken. Indeed, we notice that increasing/decreasing the shear modulus $\mu$, such a plane changes. The solutions reported in Figures \ref{fig:patatina_guess}-\ref{fig:calzascarpe_guess} correspond to the choice $\mu = 1$.
\begin{figure}[h!]
\centering
\includegraphics[scale=0.25]{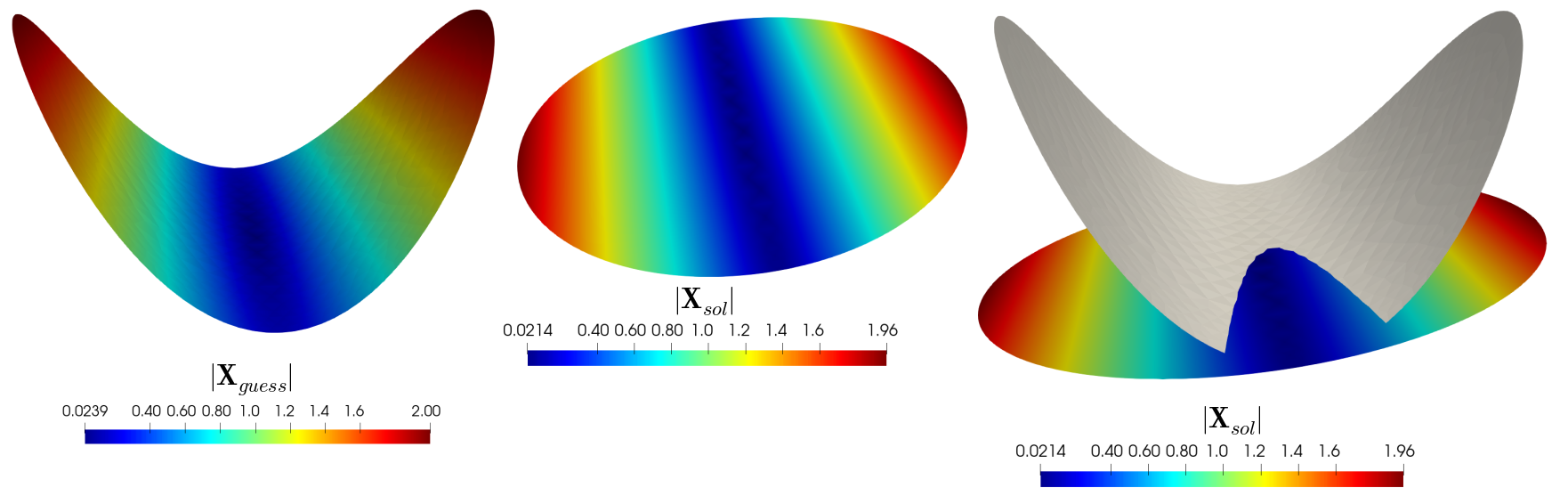}
\caption{Magnitude of the initial guess $\vect{X}_{guess}=(\rho\cos\vartheta, - \rho\sin\vartheta, \rho^2 \sin^2\vartheta)$ (left), and of the corresponding obtained solution $\vect{X}_{sol}$ (center), which in this case is a circle. On the right, we plot the initial guess (in gray) and the obtained solution (colored) to identify its position in $\mathbb{R}^3$ with respect to the initial guess.}\label{fig:patatina_guess}
\end{figure}
\begin{figure}[h!]
\centering
\includegraphics[scale=0.25]{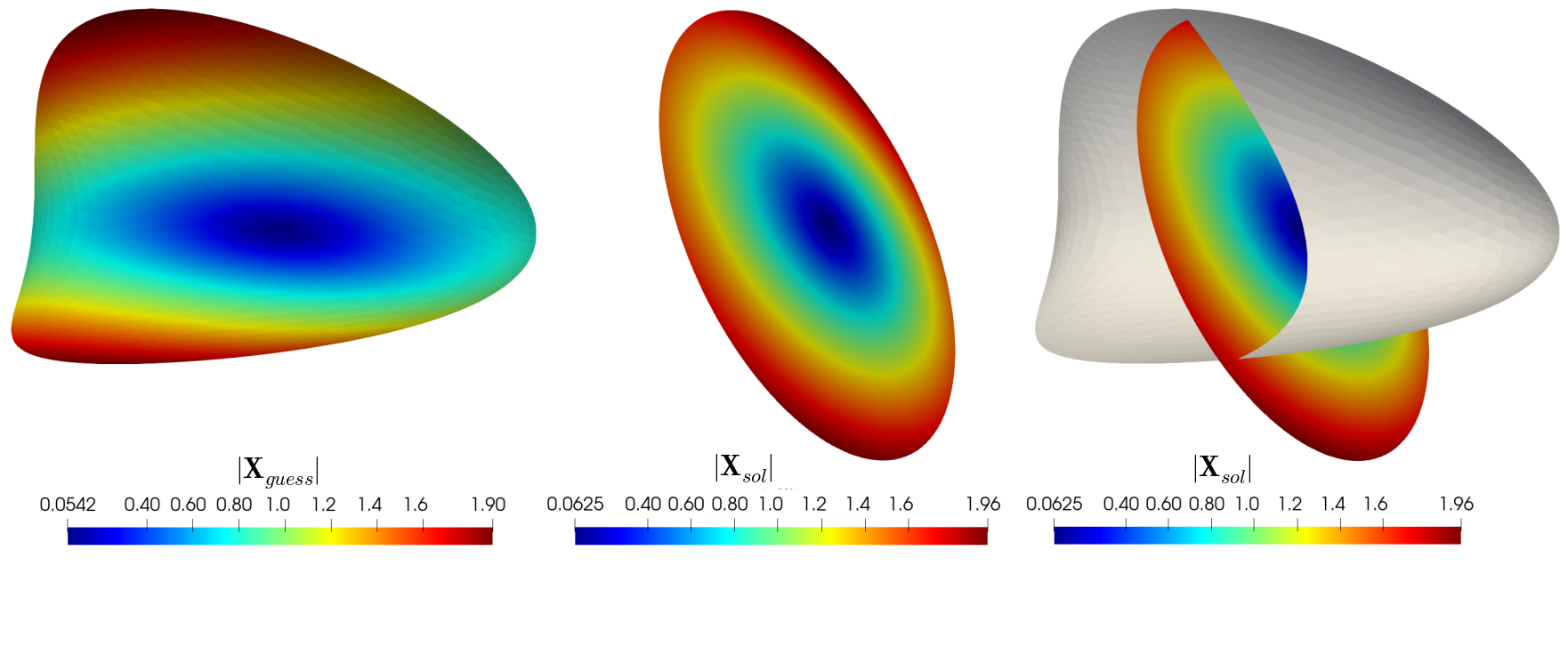}
\caption{Magnitude of the initial guess $\vect{X}_{guess}=\left(\rho\cos\vartheta,\rho\sin\vartheta,-\rho^2\right)$ corresponding obtained solution $\vect{X}_{sol}$ (center), which in this case is a circle. On the right, we plot the initial guess (in gray) and the obtained solution (colored) to identify its position in $\mathbb{R}^3$ with respect to the initial guess.}\label{fig:calzascarpe_guess}
\end{figure}

Unfortunately, considering $\textgoth{C}$ as the identity tensor, we are not able to obtain critical points different from the disc. This motivates us to consider next an elasticity tensor $\textgoth{C}$ different from the identity.

\subsection{Elasticity tensor different from the Identity Matrix}
In this section, we choose different expressions for the linear elastic tensor $\textgoth{C}$, different from the identity. In particular, in Figure \ref{fig:rottura_sym_guess_C_2} its expression is 
$$
\textgoth{C}_{ijk\ell} = \delta_{ij} \delta_{k\ell} + 2 \left(1 + 10 \sin^2(\rho\cos\vartheta)\right)\delta_{ik}\delta_{jl},
$$
while in Figure \ref{fig:rottura_sym_guess_C_1} it is given by
$$
\textgoth{C}_{ijk\ell} = \abs{\rho \cos\vartheta} \delta_{ij} \delta_{k\ell} + 2 \delta_{ik}\delta_{jl}.
$$

The same initial condition $\vect{X}_{guess}$
$$
\vect{X}_{guess}=\left(\rho\cos\vartheta, - \rho\sin\vartheta, \rho \sin\vartheta+ \sin\left(\frac{\pi}{2}\rho\right)\cos(2\vartheta)\right),
$$
is set in both cases.

We run the numerical simulations and we find, in both cases, critical points different from the disc. Precisely, the critical points present self-intersections. As discussed in the introduction, this is something that should be expected in minimizing the Dirichlet energy functional for the area contribution (see Chapter 8 of \cite{morgan2016geometric}) and thus we were set out to show in this work.
The obtained solutions look profoundly different due to anisotropy effects in $\textgoth{C}$. Indeed, such choices of the linear elastic tensor favor different directions at which the stress will be maximum, thus resulting in two completely dissimilar solutions: a planar one with self-intersections in Figure \ref{fig:rottura_sym_guess_C_2} and a non-planar one with self-intersections in Figure \ref{fig:rottura_sym_guess_C_1}. 
\begin{figure}[h!]
\centering
\includegraphics[scale=0.35]{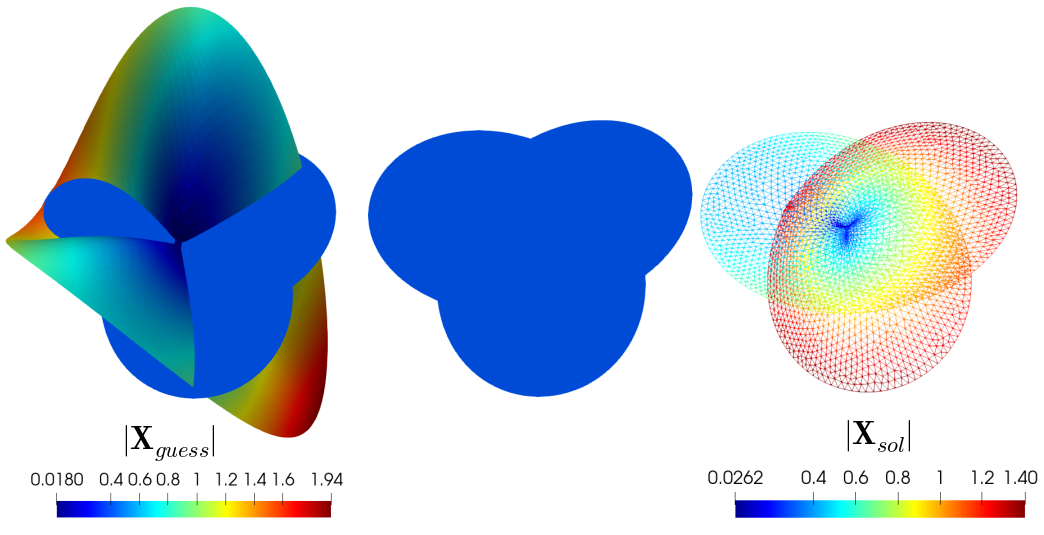}
\caption{On the left, magnitude of the initial guess $\vect{X}_{guess}=\left(\rho\cos\vartheta, - \rho\sin\vartheta, \rho \sin\vartheta+ \sin\left(\frac{\pi}{2}\rho\right)\cos(2\vartheta)\right)$ with $\textgoth{C}_{ijk\ell} = \abs{\rho \cos\vartheta} \delta_{ij} \delta_{k\ell} + 2 \delta_{ik}\delta_{jl}$ (colored), overlaid with the corresponding obtained solution $\vect{X}_{sol}$ in blue. We plot next the shape of $\vect{X}_{sol}$ (center) and its magnitude (right), resulting in planar self-intersections. }\label{fig:rottura_sym_guess_C_2}
\end{figure}
\begin{figure}[h!]
\centering
\includegraphics[scale=0.35]{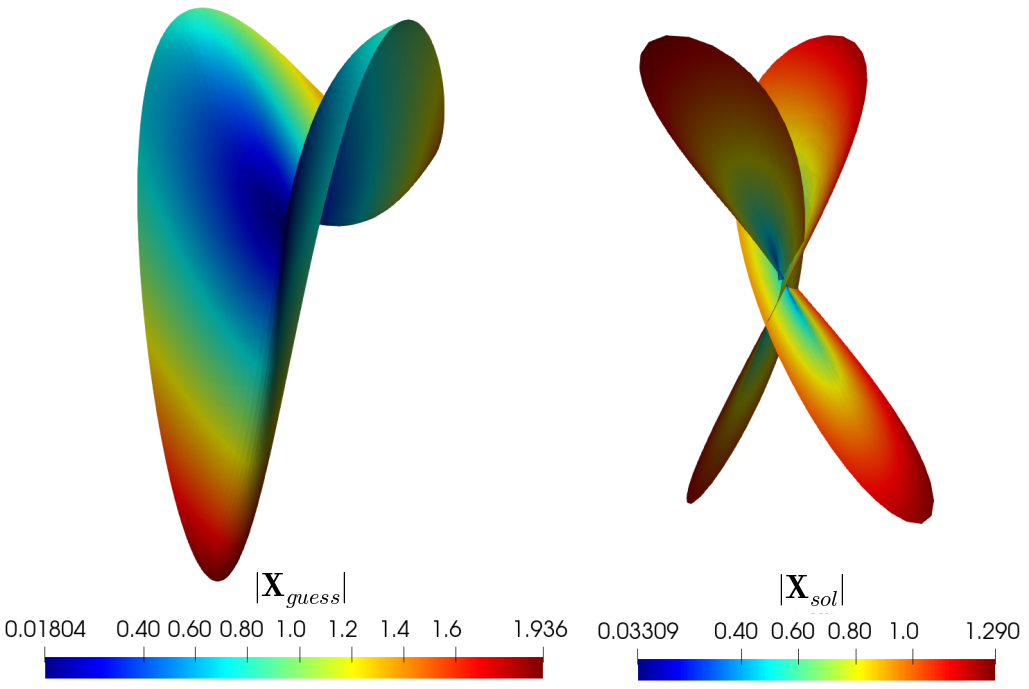}
\caption{Magnitude of the initial guess $\vect{X}_{guess}=\left(\rho\cos\vartheta, - \rho\sin\vartheta, \rho \sin\vartheta+ \sin\left(\frac{\pi}{2}\rho\right)\cos(2\vartheta)\right)$ with $\textgoth{C}_{ijk\ell} = \delta_{ij} \delta_{k\ell} + 2 \left(1 + 10 \sin^2(\rho\cos\vartheta)\right)\delta_{ik}\delta_{jl}$ (left), and of the corresponding obtained solution $\vect{X}_{sol}$ (right), which results in a non-planar solution with self-intersections.}\label{fig:rottura_sym_guess_C_1}
\end{figure}


\bigskip
\bigskip

\section*{Acknowledgements}
F.B. thanks the project {\em Reduced order modelling for numerical simulation of partial differential equations} funded by Università Cattolica del Sacro Cuore.
The work of F.B. has been also partially supported by INdAM$-$GNCS, through the GNCS 2022 project {\em Metodi di riduzione computazionale per le scienze applicate: focus su sistemi complessi}.
The stay of G.B. at Politecnico di Torino was supported by Progetto d’Eccellenza 2018 -- 2022 funded by Ministero dell’Università e della Ricerca (code: E11G18000350001), and the stay of G.B. at Università di Pisa is supported by the European Research Council (ERC), under the European Union's Horizon 2020 research and innovation program, through the project ERC VAREG - {\em Variational approach to the regularity of the free boundaries} (grant agreement No. 853404). The work of G.B. and L.L. has been partially supported by INdAM$-$GNAMPA. The work of A.M. has been also partially supported by INdAM$-$GNFM.

\section*{Declarations}
{\bf Competing interests} The authors declare that they have no known competing financial interests or personal relationships that could have appeared to influence the work reported in this paper.

\bibliographystyle{abbrv}
\bibliography{references}
\end{document}